\def\arxiv#1{\href{http://arxiv.org/abs/#1}{\texttt{arXiv:#1}}}
\let\intertext\shortintertext
\theoremstyle{plain}
\newtheorem{theorem}{Theorem}[section]
\newtheorem{proposition}[theorem]{Proposition}
\newtheorem{lemma}[theorem]{Lemma}
\newtheorem{corollary}[theorem]{Corollary}
\theoremstyle{definition}
\newtheorem{example}[theorem]{Example}
\newtheorem{remark}[theorem]{Remark}
\newtheorem{assumption}[theorem]{Assumption}
\theoremstyle{remark}
\newtheorem*{acknowledgements}{Acknowledgements}
\numberwithin{equation}{section}
\DeclareMathAlphabet\mathbfit{OML}{cmm}{b}{it}
\def\Z{\mathbb Z}
\def\Q{\mathbb Q}
\def\R{\mathbb R}
\DeclareMathOperator{\Hom}{Hom}
\DeclareMathOperator{\Ext}{Ext}
\DeclareMathOperator{\im}{im}
\DeclareMathOperator{\Char}{char}
\let\MFsm\setminus
\def\setminus{\mathbin{\mkern-2mu\MFsm\mkern-2mu}}
\def\UU{\mathcal U}
\def\FF{\mathcal F}
\def\OO{\mathcal O}
\def\II{\mathcal I}
\def\DD{\mathcal D}
\def\AA{\ell}
\def\AAv{\AA}
\def\kk{\Bbbk}
\def\kktilde{\skew{-2}\tilde\Bbbk}
\def\dirlim{\mathop{\underrightarrow\lim}}
\DeclareMathOperator{\depth}{depth}
\def\pair#1{{\langle#1\rangle}}
\def\Hc{H_{c}}
\def\HT{H_{T}}
\def\HTc{H_{T,c}}
\def\CT{C_{T}}
\def\CTc{C_{T,c}}
\def\barCTc{{\bar C}_{T,c}}
\def\barhCTc{{\bar C}^{T,c}}
\def\AB{AB}
\def\m{\mathfrak m}
\def\hHc{H^{c}}
\def\hCT{C^{T\!}}
\def\hCTc{C^{T,c}}
\def\hHT{H^{T\!}}
\def\hHTc{H^{T,c}}
\def\hatX{\widehat X}
\def\ie{\emph{i.\,e.}}
\def\cf{\emph{cf.}}
\def\EE{\mathcal{E}}
\def\II{I\mkern-3mu I}
\def\EI{{}^{I}\mkern-4mu E}
\def\EII{{}^{\II}\mkern-4mu E}
\def\dI{d^{\mkern 1mu I}}
\def\dII{d^{\mkern 1mu\II}}
\def\Cm{C_{\m}}
\def\Hm{H_{\m}}
\def\HmII{{}^{\II}\mkern-4mu H_{\m}}
\def\nn{\m_{L}}
\def\Cn{C_{\nn\!}}
\def\Hn{H_{\nn\!}}
\def\citeorbitsone#1{\cite[#1]{AlldayFranzPuppe}}
\begin{document}

\title[Equivariant Poincaré--Alexander--Lefschetz duality]{Equivariant Poincaré--Alexander--Lefschetz duality and the Cohen--Macaulay property}
\author{Christopher Allday}
\address{Department of Mathematics, University of Hawaii,
  2565~McCarthy Mall, Honolulu, HI~96822, U.S.A.}
\email{chris@math.hawaii.edu}
\author{Matthias Franz}
\address{Department of Mathematics, University of Western Ontario,
      London, Ont.\ N6A\;5B7, Canada}
\email{mfranz@uwo.ca}
\author{Volker Puppe}
\address{Fachbereich Mathematik, Universität Konstanz, 78457 Konstanz, Germany}
\email{volker.puppe@uni-konstanz.de}
\thanks{M.\,F.\ was partially supported by an NSERC Discovery Grant.}

\hypersetup{pdfauthor=\authors}
\hypersetup{pdftitle=Equivariant Poincaré-Alexander-Lefschetz duality and the Cohen-Macaulay property}

\subjclass[2010]{Primary 55N91; secondary 13C14, 57R91}

\begin{abstract}
  We prove a Poincaré--Alexander--Lefschetz duality theorem
  for rational torus-equivariant cohomology and rational homology manifolds.
  We allow non-compact and non-orientable spaces.
  We use this to deduce certain short exact sequences in equivariant cohomology,
  originally due to Duflot in the differentiable case, from similar, but more general short exact sequences in equivariant homology.
  A crucial role is played by the Cohen-Macaulayness of relative equivariant cohomology modules
  arising from the orbit filtration.
\end{abstract}

\maketitle

\section{Introduction}

Let \(T=(S^{1})^{r}\) be a torus, and let \(X\) be a \(T\)-space satisfying some
fairly mild assumptions (see Section~\ref{sec:4:assumptions}).
Recall that \(\HT^{*}(X)=\HT^{*}(X;\Q)\), the equivariant cohomology of~\(X\) with rational coefficients,
can be defined as the cohomology of the Borel construction (or homotopy quotient)~\(X_{T}=(ET\times X)/T\),
and that it is an algebra over the polynomial ring~\(R=H^{*}(BT)\).

In~\cite[p.~23]{Borel:1960}, A.~Borel observed that
``even if one is interested mainly in a statement involving only cohomology,
one has to use in the proof groups which play the role of homology groups,
and therefore this presupposes some homology theory''.
In this spirit we defined in~\cite{AlldayFranzPuppe}
the equivariant homology~\(\hHT_{*}(X)\)
of~\(X\), which is a module over~\(R\). 
In contrast to~\(\HT^{*}(X)\), it is not the homology of any space. Nevertheless, it has many
desirable properties: it is related to~\(\HT^{*}(X)\) via universal coefficient spectral sequences,
and, in the case of a rational Poincaré duality space~\(X\), also through an equivariant Poincaré duality isomorphism
\begin{equation}
  \label{eq:PD-iso-intro}
  \HT^{*}(X) \stackrel{\cong}\longrightarrow \hHT_{*}(X),
  \quad
  \alpha\mapsto \alpha\cap o_{T},
\end{equation}
which is the cap product with an equivariant orientation~\(o_{T}\in\hHT_{*}(X)\).
Note that unlike the non-equivariant situation, the isomorphism~\eqref{eq:PD-iso-intro}
does not necessarily translate into the perfection of the equivariant Poincaré pairing
\begin{equation}
  \label{eq:PD-pairing-intro}
  \HT^{*}(X) \times \HT^{*}(X) \to R,
  \quad
  (\alpha,\beta) \mapsto \pair{\alpha\cup\beta,o_{T}}.
\end{equation}
In fact, the pairing~\eqref{eq:PD-pairing-intro} is perfect if and only
if \(\HT^{*}(X)\) is a reflexive \(R\)-module, see~\cite[Cor.~1.3]{AlldayFranzPuppe}.
Hence, in the equivariant setting Poincaré duality
cannot be phrased in terms of cohomology alone.
Another reason to consider equivariant homology is that sometimes it behaves
better than cohomology. For example, the sequence
\begin{equation}
  \label{eq:exact-hHT-0-intro}
  0 \to \hHT_{*}(X^{T}) \to \hHT_{*}(X) \to \hHT_{*}(X,X^{T}) \to 0
\end{equation}
is always exact (see Proposition~\ref{thm:hHT-short-exact-intro} below), 
which is rarely the case for the corresponding sequence
in equivariant cohomology.

The first theme of the present paper is to extend Poincaré duality and its generalization
Poincaré--Alexander--Lefschetz duality to the torus-equivariant setting.
Equivariant Poincaré--Alexander--Lefschetz duality
for compact Lie groups and certain generalized (co)homology theories
has been discussed by Wirthmüller~\cite{Wirthmuller:1974}
and Lewis--May~\cite[\S III.6]{LewisMaySteinberger:1986},~\cite[\S XVI.9]{May:1996}
in the framework of equivariant stable homotopy theory.
Here we are interested in an explicit algebraic description
in the context of the singular Cartan model, \cf~Section~\ref{sec:4:singular-Cartan-model}.

We allow rational homology manifolds which may be non-compact or non-ori\-entable.
To this end we have to define equivariant cohomology with compact supports
and equivariant homology with closed supports, and, for non-orientable homology manifolds,
also equivariant (co)homology with twisted coefficients.

\begin{theorem}[Poincaré--Alexander--Lefschetz duality]
  \label{thm:PAL-intro}
  Let \(X\) be an orientable \(n\)-dimensional rational homology manifold
  with a \(T\)-action, and let \((A,B)\) be a closed \(T\)-stable pair in~\(X\).
  Then there is an isomorphism of \(R\)-modules 
  \begin{equation*}
     \HT^{*}(X\setminus B,X\setminus A) \cong \hHTc_{n-*}(A,B).
  \end{equation*}
\end{theorem}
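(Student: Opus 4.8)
The plan is to deduce the statement from the absolute equivariant Poincaré duality isomorphism of Theorem~\ref{thm:equiv-PD} by a comparison of long exact sequences. Write \(U=X\setminus B\) and \(V=X\setminus A\), so that \(V\subseteq U\) are open \(T\)-stable submanifolds of~\(X\), each again an orientable \(n\)-dimensional rational homology manifold, and \(C\coloneqq U\setminus V=A\setminus B\) is a closed \(T\)-stable subset of~\(U\). The pair \((X\setminus B,X\setminus A)\) is literally \((U,U\setminus C)\), and \(\hHTc_{*}(A,B)\) agrees with the closed-support equivariant homology \(\hHTc_{*}(C)\) of the locally closed subset~\(C\): both occur as the third term of the long exact sequence attached to \(B\subseteq A\), which is how relative equivariant homology with closed supports of a pair is set up in Section~\ref{sec:equiv-homology} (equivalently, this is excision for Borel--Moore-type homology). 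So it suffices to prove the ``Poincaré--Lefschetz'' statement
\[
  \HT^{*}(U,U\setminus C)\cong\hHTc_{n-*}(C)
\]
for a \(T\)-manifold~\(U\) and a closed \(T\)-stable subset~\(C\subseteq U\).

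For this I would first construct the duality map at the level of the singular Cartan model of Section~\ref{sec:singular-Cartan-model}: cap product with a Cartan cycle representing the equivariant orientation class \(o_{T}\in\hHTc_{n}(U)\) (restricted from~\(X\)) sends a relative cocycle---one supported on~\(U\) and null-cohomologous on \(V=U\setminus C\)---to a closed-support chain supported, up to the usual boundary terms, on~\(C\), yielding a natural transformation \(\mathcal D_{C}\colon\HT^{*}(U,U\setminus C)\to\hHTc_{n-*}(C)\). Applied to \(C=U\) and (vacuously) to \(C=\varnothing\), the same recipe recovers the isomorphisms \(\HT^{*}(U)\cong\hHTc_{n-*}(U)\) and \(\HT^{*}(V)\cong\hHTc_{n-*}(V)\) of Theorem~\ref{thm:equiv-PD}. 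One then checks that \(\mathcal D\) is a morphism from the long exact cohomology sequence of the pair \((U,U\setminus C)\) to the long exact sequence
\[
  \cdots\to\hHTc_{n-k}(C)\to\hHTc_{n-k}(U)\to\hHTc_{n-k}(U\setminus C)\to\hHTc_{n-k-1}(C)\to\cdots
\]
of the closed subset \(C\subseteq U\): commutativity of the squares with the restriction maps \(\HT^{*}(U)\to\HT^{*}(V)\) and their ``wrong-way'' counterparts \(\hHTc_{*}(U)\to\hHTc_{*}(V)\) is the naturality of the cap product, and commutativity of the square with the connecting maps comes from the chain-level description. Since the two middle vertical arrows are isomorphisms by Theorem~\ref{thm:equiv-PD}, the five lemma shows that \(\mathcal D_{C}\) is an isomorphism, which is the assertion.

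I expect the technical heart to be exactly the chain-level bookkeeping in the second step: making the cap product with~\(o_{T}\) in the singular Cartan model precise together with the support conditions, verifying that it is natural for the open inclusion \(V\hookrightarrow U\) (contravariant on cohomology but inducing a restriction on closed-support homology), and checking the sign-correct commutativity of the square relating the two connecting homomorphisms. Should this compatibility prove awkward to verify head-on, the fallback is to run the usual Poincaré-duality bootstrap for \(\mathcal D_{C}\) itself: the class of \(C\) for which \(\mathcal D_{C}\) is an isomorphism contains the \(T\)-invariant cells on which everything reduces to a direct computation (as in the proof of Theorem~\ref{thm:equiv-PD}), is closed under finite unions by Mayer--Vietoris and the five lemma, and under directed unions by the finiteness built into the standing assumptions of Section~\ref{sec:assumptions}; a covering argument then finishes the proof. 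The orientability hypothesis enters only to dispense with twisted coefficients, so no local system is needed here.
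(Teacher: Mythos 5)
Your plan is the same in spirit as the paper's: construct a cap-product duality map with the equivariant orientation~\(o_T\), reduce relative duality to absolute duality via a long exact sequence comparison, and conclude with the five lemma. The paper carries this out in Theorem~\ref{thm:PAL-A-B} by filtering \(\emptyset\subset B\subset A\subset X\), building the relative cup product carefully at the cochain level on \(\UU\)-small cochains (Lemma~\ref{thm:relative-cup-Ui}), and showing each \(f_i\colon\barCTc^{*}(X,X_i;\kktilde)\to\hCT_{*}(\hatX_i)\) is a quasi-isomorphism by identifying it, via Proposition~\ref{thm:relative-cohomology-complement}, with the absolute map of Proposition~\ref{thm:PD-noncompact}. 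The one genuine divergence in your route is that you aim \emph{directly} at an isomorphism \(\HT^{*}\to\hHTc_{n-*}\) by capping closed-support cohomology against \(o_T\in\hHTc_n\); the paper instead first proves \(\HTc^{*}\cong\hHT_{n-*}\) (the pairing \eqref{eq:cap-product-twisted-c} has compactly supported cohomology as its input) and then obtains the form you want by applying \(\Hom_R(-,R)\) to the chain-level quasi-isomorphism, using the Hirsch--Brown model to control the double dual. The paper's indirect route buys two things: it needs only the non-equivariant duality in the form \(\Hc^{*}(X)\cong H_{n-*}(X)\) stated in Lemma~\ref{thm:PD-nonequiv}, whereas your direct route needs the companion \(H^{*}(X)\cong H^{BM}_{n-*}(X)\) (true, but not stated in the paper); and by working on the (co)chain level it simultaneously delivers the spectral sequence version Proposition~\ref{thm:PAL-duality}, which your cohomology-level five-lemma argument would not by itself. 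Two smaller points: your references to ``Theorem~\ref{thm:equiv-PD}'' and ``Section~\ref{sec:equiv-homology}'' are to the companion paper; the statements you actually need here are Proposition~\ref{thm:PD-noncompact} and Proposition~\ref{thm:relative-cohomology-complement}. And the Mayer--Vietoris bootstrap you offer as a fallback is a legitimate alternative for the isomorphism itself, but it is harder to make compatible with the filtration and would not readily give the ladder or spectral sequence statements.
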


Here \(\hHTc_{*}(A,B)\) denotes the equivariant homology of the pair~\((A,B)\)
with compact supports. 
Theorem~\ref{thm:PAL-intro} extends to an isomorphism of spectral sequences
induced by a \(T\)-stable filtration on~\(X\), and it implies
an equivariant Thom isomorphism.
We also prove analogous results for non-orientable manifolds
and twisted coefficients, which is essential for our applications.

Another important result in equivariant stable homotopy theory is the Adams isomorphism.
In Proposition~\ref{thm:locally-free-action} we prove a version of it in our context.

\smallskip

Our second theme is to extend the results of~\cite{AlldayFranzPuppe}
to the new (co)homology theories and to combine them with equivariant duality results.

Recall that the equivariant \(i\)-skeleton~\(X_{i}\subset X\) is the union of all \(T\)-orbits of dimension~\(\le i\);
this defines the orbit filtration of~\(X\).
A crucial observation, originally made by Atiyah~\cite{Atiyah:1974}
in the context of equivariant \(K\)-theory,
is that
the \(R\)-module \(\HT^{*}(X_{i},X_{i-1})\) 
is zero or Cohen--Macaulay of dimension~\(r-i\).
The same holds for equivariant homology, and it implies
the following result.

\begin{proposition}
  \label{thm:hHT-short-exact-intro}
  For any~\(0\le i\le r\) there is an exact sequence
  \begin{equation*}
    0 \to \hHT_{*}(X_{i}) \to \hHT_{*}(X) \to \hHT_{*}(X,X_{i}) \to 0.
  \end{equation*}
\end{proposition}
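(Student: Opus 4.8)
The plan is to deduce the short exact sequence from the Cohen--Macaulay statement about the relative modules \(\HT^{*}(X_{i},X_{i-1})\) together with the long exact sequence of the pair \((X,X_{i})\) in equivariant homology. First I would recall that for each \(j\) the \(R\)-module \(\hHT_{*}(X_{j},X_{j-1})\) is either zero or Cohen--Macaulay of dimension exactly \(r-j\); this is the homological counterpart of Atiyah's observation quoted in the introduction, and I would cite the relevant statement from Section~\ref{sec:orbit-filtration} (it follows from Theorem~\ref{thm:stratum-cm} and the universal coefficient comparison). The key algebraic input is then the vanishing \(\Ext_{R}^{p}\!\bigl(\hHT_{*}(X_{j},X_{j-1}),R\bigr)=0\) for \(p\neq j\), which is exactly what Cohen--Macaulayness of dimension \(r-j\) over the polynomial ring \(R\) of Krull dimension \(r\) gives (combine Theorem~\ref{thm:dim-depth-ext} with the local duality / Koszul machinery of Section~\ref{sec:koszul-resolution}).

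Next I would run an induction on \(i\). The long exact homology sequence of the triple \((X,X_{i},X_{i-1})\) reads
\begin{equation*}
  \cdots \to \hHT_{*}(X_{i},X_{i-1}) \to \hHT_{*}(X,X_{i-1}) \to \hHT_{*}(X,X_{i}) \to \hHT_{*-1}(X_{i},X_{i-1}) \to \cdots,
\end{equation*}
and similarly the sequence of the pair \((X_{i},X_{i-1})\) inside \(X_{i}\). The base case \(i=0\) is the sequence \eqref{eq:exact-hHT-0-intro} (with \(X^{T}=X_{0}\)); here one uses that \(\hHT_{*}(X_{0})=\hHT_{*}(X^{T})\) is a free \(R\)-module, so there are no higher \(\Ext\)-obstructions and the connecting maps vanish. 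For the inductive step I would assume the sequence for \(i-1\) and splice together the two long exact sequences; the claim reduces to showing that every connecting homomorphism \(\hHT_{*}(X,X_{i}) \to \hHT_{*-1}(X_{i},X_{i-1})\) is zero. This is where the \(\Ext\)-vanishing enters: by the inductive hypothesis the module in the middle fits in extensions built out of the \(\hHT_{*}(X_{j},X_{j-1})\) for \(j<i\), all Cohen--Macaulay of dimension \(>r-i\), while the target \(\hHT_{*}(X_{i},X_{i-1})\) is Cohen--Macaulay of dimension \(r-i\); a map from the former to the latter would be detected in the wrong \(\Ext\)-degree and must vanish. Concretely I would phrase this via the spectral-sequence comparison of Section~\ref{sec:comparison-ss}, or more directly by noting that \(\Hom_{R}\) and \(\Ext^{>0}_{R}\) out of a higher-dimensional Cohen--Macaulay module into a lower-dimensional one vanish, forcing the connecting maps to be trivial.

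The main obstacle is precisely the vanishing of these connecting maps: a priori the long exact sequence does not split, and one must use the CM dimensions in an essential way rather than a formal diagram chase. I expect the cleanest route is to observe that \(\hHT_{*}(X,X_{i})\) has a finite filtration whose subquotients are among the \(\hHT_{*}(X_{j},X_{j-1})\) with \(i<j\le r\) — hence all of dimension strictly less than \(r-i\) — so that \(\depth_{R}\hHT_{*}(X,X_{i}) < r-i = \dim_{R}\hHT_{*}(X_{i},X_{i-1})\) is impossible to reconcile with a nonzero map into the latter unless that map is zero; more precisely, a nonzero \(R\)-linear map from a module of dimension \(d'<d\) into a CM module of dimension \(d\) over \(R\) cannot exist because the image would be a submodule of the CM module of dimension \(\le d'<d\), violating the fact that CM modules are equidimensional (their associated primes all have the same dimension \(d\)). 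This kills the connecting maps, the long exact sequence breaks into the asserted short exact sequences, and the induction closes.
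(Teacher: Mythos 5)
Your proposal is correct and its essential content matches the paper's own argument: both hinge on (a) the Cohen--Macaulay property of the strata $\hHT_{*}(X_{j},X_{j-1})$ with the precise dimension $r-j$, (b) bounding $\dim_{R}\hHT_{*}(X,X_{i})\le r-i-1$, and (c) the lemma that a nonzero $R$-linear map from a module of dimension $<d$ into a Cohen--Macaulay module of dimension $d$ cannot exist (this is exactly Lemma~\citeorbitsone{\ref*{thm:CM-map-0}}, and your ``associated primes all have coheight $d$'' justification is the right reason). The paper packages this as a \emph{falling} induction on $i$ proving Propositions~\ref{thm:hHT-short-exact} and~\ref{thm:Ext-i-j-0} simultaneously; you run a forward induction on $i$ and obtain the dimension bound instead from the orbit filtration of the chain complex $\hCT_{*}(X,X_{i})$, whose associated graded pieces are $\hCT_{*}(X_{j},X_{j-1})$ for $j>i$. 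Both are fine; yours is marginally more self-contained for the special case $\hHT_{*}(X)\to\hHT_{*}(X,X_{i})$, while the paper's gets the more general triple version for free.

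One caution: the middle paragraph of your write-up has the comparison inverted. You write that ``the module in the middle fits in extensions built out of $\hHT_{*}(X_{j},X_{j-1})$ for $j<i$, all Cohen--Macaulay of dimension $>r-i$,'' and then claim a map from such a module into the CM module of dimension $r-i$ vanishes. That direction is wrong --- a map from a higher-dimensional module into a lower-dimensional one can certainly be nonzero ($R\to R/(t)$). The correct statement, which you do give in your final paragraph, is that the \emph{source} $\hHT_{*}(X,X_{i})$ is filtered by (subquotients of) $\hHT_{*}(X_{j},X_{j-1})$ with $j>i$, hence has dimension $<r-i$, and a map from a module of dimension $<r-i$ into a CM module of dimension $r-i$ must vanish because its image would be a submodule of dimension $<r-i$, contradicting unmixedness. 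Also note that the subquotients of the filtration of $\hHT_{*}(X,X_{i})$ are in general only subquotients of $\hHT_{*}(X_{j},X_{j-1})$, not equal to them, but this does not affect the dimension bound. If you rewrite the inductive step using only the corrected version, the proof is complete.
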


The case~\(i=0\) was made explicit in~\eqref{eq:exact-hHT-0-intro} above.
Again, this extends to homology with compact supports and/or twisted coefficients, see Proposition~\ref{thm:hHT-short-exact}.
Using the naturality properties of equivariant Poincaré--Alexander--Lefschetz duality, 
we can easily generalize a result of Duflot~\cite{Duflot:1983}
about smooth actions on differential manifolds, see Proposition~\ref{thm:duflot-general}: 

\begin{corollary}
  \label{thm:duflot-intro}
  Let \(X\) be a rational homology manifold.
  For any~\(0\le i\le r\) there is an exact sequence
  \begin{equation*}
    0 \to \HT^{*}(X,X\setminus X_{i}) \to \HT^{*}(X) \to \HT^{*}(X\setminus X_{i}) \to 0.
  \end{equation*}
\end{corollary}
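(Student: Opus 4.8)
The plan is to deduce the stated sequence from the corresponding short exact sequence in equivariant homology via equivariant Poincaré--Alexander--Lefschetz duality. More precisely, I would dualize the long exact cohomology sequence of the pair~$(X,X\setminus X_{i})$ into the long exact homology sequence of the pair~$(X,X_{i})$ and then invoke the splitting of the latter provided by Proposition~\ref{thm:hHT-short-exact-intro}. Two preliminary observations: by the hypotheses on the orbit filtration $X_{i}$ is a closed $T$-stable subspace of~$X$, so $(X_{i},\emptyset)$, $(X,\emptyset)$ and $(X,X_{i})$ are closed $T$-stable pairs of the kind to which Theorem~\ref{thm:PAL-intro} applies (here $X_{i}$ itself need not be a homology manifold, only~$X$ must be one); and $X\setminus X_{i}$, being open in~$X$, is again an $n$-dimensional rational homology manifold, so no separate hypothesis is needed for the term~$\HT^{*}(X\setminus X_{i})$.

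Write $n=\dim X$. Applying Theorem~\ref{thm:PAL-intro} to the three pairs above produces isomorphisms of $R$-modules
\begin{gather*}
  \HT^{*}(X,X\setminus X_{i})\cong\hHTc_{n-*}(X_{i}),\qquad
  \HT^{*}(X)\cong\hHTc_{n-*}(X),\\
  \HT^{*}(X\setminus X_{i})\cong\hHTc_{n-*}(X,X_{i}).
\end{gather*}
Next I would use the naturality of the duality isomorphism: the inclusions of closed pairs $(X_{i},\emptyset)\hookrightarrow(X,\emptyset)\hookrightarrow(X,X_{i})$ induce on the cohomology side exactly the maps $\HT^{*}(X,X\setminus X_{i})\to\HT^{*}(X)\to\HT^{*}(X\setminus X_{i})$ from the long exact sequence of the pair~$(X,X\setminus X_{i})$, while the connecting homomorphism of that sequence matches the connecting homomorphism of the homology sequence of~$(X,X_{i})$. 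Hence these isomorphisms assemble into an isomorphism between the two long exact sequences. Since Proposition~\ref{thm:hHT-short-exact-intro}, in its compactly supported form (Proposition~\ref{thm:hHT-short-exact}), asserts that the homology sequence of~$(X,X_{i})$ breaks up into the short exact pieces $0\to\hHTc_{*}(X_{i})\to\hHTc_{*}(X)\to\hHTc_{*}(X,X_{i})\to 0$, the same holds for the cohomology sequence, which is exactly the assertion.

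For non-orientable~$X$ one runs the same argument with the constant coefficients on the homology side replaced by the $T$-equivariant orientation module~$\OO$ of~$X$: the twisted version of Theorem~\ref{thm:PAL-intro} gives $\HT^{*}(X,X\setminus X_{i})\cong\hHTc_{n-*}(X_{i};\OO)$, and analogously for the other two terms, while Proposition~\ref{thm:hHT-short-exact} still supplies the splitting of the homology sequence with coefficients in~$\OO$. The Cohen--Macaulay property of the relative modules $\HT^{*}(X_{j},X_{j-1})$ that underlies this splitting, recalled in the introduction, is intrinsic to the orbit filtration and is therefore available with arbitrary local coefficients on~$X$.

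The one step that genuinely requires care is the compatibility of the three duality isomorphisms with the restriction and connecting maps, that is, one needs Theorem~\ref{thm:PAL-intro} in its natural form for inclusions of closed $T$-stable pairs. Granting the functoriality of the duality isomorphism constructed from the singular Cartan model, the commutativity of the resulting ladder between the cohomology and homology long exact sequences is then a formal matter, and the argument concludes as indicated.
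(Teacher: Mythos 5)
Your proposal is correct and follows essentially the same path as the paper: Proposition~\ref{thm:hHT-short-exact} provides the short exact homology sequence, and the naturality of Poincaré--Alexander--Lefschetz duality (the commutative ladder in Theorem~\ref{thm:PAL-A-B}, including its twisted-coefficient form for non-orientable~\(X\)) transfers the exactness to the cohomology side. One small slip in terminology: you refer to \(\hHTc_{*}\) as the ``compactly supported'' form, but in the paper's conventions \(\hHTc_{*}\) is equivariant homology with \emph{closed} supports --- which is indeed the one dual to \(\HT^{*}\), so the equations you wrote are the right ones; and the orientation local system you denote~\(\OO\) is what the paper writes as~\(\kktilde\).
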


\smallskip

We now turn to the relation between equivariant homology and the orbit filtration.
Recall that the \emph{Atiyah--Bredon complex}~\(\AB^{*}(X)\) is defined by
\begin{equation}
  \AB^{i}(X)=\HT^{*+i}(X_{i},X_{i-1})
\end{equation}
for~\(0\le i\le r\) and zero otherwise. (We set \(X_{-1}=\emptyset\).)
The differential
\begin{equation}
  d_{i}\colon \HT^{*}(X_{i},X_{i-1}) \to \HT^{*+1}(X_{i+1},X_{i})
\end{equation}
is the boundary map in the long exact sequence of the triple~\((X_{i+1},X_{i},X_{i-1})\).
In other words, \(\AB^{*}(X)\) is the \(E_{1}\)~page of the spectral sequence
arising from the orbit filtration and converging to~\(\HT^{*}(X)\),
and \(H^{*}(\AB^{*}(X))\) is its \(E_{2}\)~page.
A principal result of~\cite{AlldayFranzPuppe} is a natural isomorphism
\begin{equation}
  \label{eq:Ext-HAB-intro}
  H^{i}(\AB^{*}(X)) = \Ext_{R}^{i}(\hHT_{*}(X),R)
\end{equation}
for all~\(i\ge 0\). This is once again a consequence of the Cohen--Macaulay property of~\(\hHT_{*}(X_{i},X_{i-1})\).
In~\cite{AlldayFranzPuppe}
we used the isomorphism~\eqref{eq:Ext-HAB-intro} to study syzygies in equivariant cohomology
and to relate them to the Atiyah--Bredon complex.
Here we again indicate generalizations to (co)homology with the new pair of supports
and{\slash}or twisted coefficients.
They are used in~\cite{Franz:geocrit} to prove a ``geometric criterion'' for syzygies
in equivariant cohomology that only depends on the quotient~\(X/T\) as a stratified space.

\medskip

The paper is organized as follows.
In Section~\ref{sec:4:equiv-cohomology}
we first review equivariant cohomology with closed supports and equivariant homology with compact supports
and then define equivariant (co)homology with the other pair of supports.
We also consider homology manifolds and define variants of equivariant (co)homology
with twisted coefficients in this case.
Theorem~\ref{thm:PAL-intro} and its corollaries are proved in Section~\ref{sec:duality-results}.
Applications to the orbit structure are given in Section~\ref{sec:PAL-applications}.
There we also relate the cohomology of the Atiyah--Bredon complex
to the question of uniformity of an action.
Given the importance of~\eqref{eq:Ext-HAB-intro},
we include a direct proof of it in Section~\ref{sec:quick-proof}.
It uses only exact sequences as in Proposition~\ref{thm:hHT-short-exact-intro} 
and avoids the intricate reasoning with spectral sequences done in~\cite{AlldayFranzPuppe}.

\begin{acknowledgements}
  We thank an anonymous referee for numerous helpful comments and
  in particular for suggesting a strengthening of Proposition~\ref{thm:locally-free-action}.
\end{acknowledgements}

\section{Equivariant homology and cohomology}
\label{sec:4:equiv-cohomology}

\subsection{Notation and standing assumptions}
\label{sec:4:assumptions}

We write ``\(\subset\)'' for inclusion of sets and ``\(\subsetneq\)'' for proper inclusion.

Throughout this paper, \(T=(S^{1})^{r}\) denotes a compact torus of rank~\(r\ge0\),
and \(\kk\) a field. From Section~\ref{sec:PAL-applications} on we will assume
that the characteristic of~\(\kk\) is zero. All (co)homology is taken with coefficients in~\(\kk\)
unless specified otherwise. 

\(C_{*}(-)\) and \(C^{*}(-)\) denote normalized singular chains and cochains
with coefficients in the field~\(\kk \), and \(H_{*}(-)\)~and~\(H^{*}(-)\)
singular (co)ho\-mol\-ogy.
We adopt a cohomological grading, so that the homology
of a space lies in non-positive degrees; an element~\(c\in H_{i}(X)\)
has cohomological degree~\(-i\).

\(R=H^{*}(BT)\) is the symmetric algebra generated by~\(H^{2}(BT)\), and \(\m\lhd R\) its maximal homogeneous ideal.
All \(R\)-modules are assumed to be graded.
We consider \(\kk \) as an \(R\)-module (concentrated in degree~\(0\)) via the canonical augmentation.
For an \(R\)-module~\(M\) and an~\(l\in\Z\)
the notation~\(M[l]\) denotes a degree shift by~\(l\), so that the degree~\(i\)~piece of~\(M[l]\)
is the degree~\(i-l\)~piece of~\(M\). For the cohomology of some space,
we alternatively write \(H^{*}(X)[l]\) or \(H^{*-l}(X)\). Due to the cohomological grading,
we have in homology the identity~\(H_{*}(X)[l]=H_{*+l}(X)\).

We assume all spaces 
to be Hausdorff, second-countable, locally compact, locally contractible
and of finite covering dimension,
hence also completely regular, separable and metrizable.
Important examples are topological (in particular, smooth) manifolds, orbifolds, complex algebraic varieties,
and countable, locally finite CW~complexes.
We also assume that only finitely many distinct isotropy groups occur in any \(T\)-space~\(X\).

\begin{remark}
  \label{rem:quotient}
  Under these assumptions on a \(T\)-space~\(X\),
  the orbit space~\(X/T\) is again Hausdorff and locally compact \cite[Thm.~3.1]{Bredon:1972},
  second-countable,
  locally contractible \cite[Thm~3.8, Cor.~3.12]{Conner:1960}
  and of finite covering dimension \cite[Thm.~VIII.3.16]{Borel:1960}.
  It is easy to see that the same applies to the fixed point set \(X^{T}\)
  with the exception of local contractability: see Remark~\ref{rem:4:loc-contractible} below.
\end{remark}

It follows from our assumptions that every subset~\(A\subset X\) is paracompact, hence singular cohomology
and Alexander--Spanier cohomology are naturally isomorphic for all 
pairs~\((A,B)\)
such that \(A\) and~\(B\) are locally contractible.
We therefore put as another standing assumption
that all
subsets~\(A\subset X\) we consider
are locally contractible;
this holds automatically if \(A\) is open in~\(X\).
And we call \((A,B)\) a \(T\)-pair if \(A\) and \(B\) are \(T\)-stable.

In addition we will put a finiteness condition on the (co)homology
of the spaces and pairs we consider. This will be explained in detail
once we have defined equivariant (co)homology.

\subsection{The singular Cartan model}
\label{sec:4:singular-Cartan-model}

Let \(X\) be a \(T\)-space.
We recall from~\citeorbitsone{Sec.~\ref*{sec:equiv-cohomology}}
the definition of equivariant homology and cohomology via the ``singular Cartan model''.
As pointed out in~\cite{AlldayFranzPuppe},
it can be replaced by the usual Cartan model
for differentiable actions on manifolds and \(\kk=\R\).

The \emph{singular Cartan model} of the \(T\)-pair~\((A,B)\) in~\(X\) is
\begin{align}
  \label{eq:4:definition-CT}
  \CT^{*}(A,B) &= C^{*}(A,B)\otimes R
  \shortintertext{with \(R\)-linear differential}
  \label{eq:4:definition-d-CT}
  d(\gamma\otimes f) &=
  d\gamma\otimes f + \sum_{i=1}^{r}a_{i}\cdot\gamma\otimes t_{i}f \\
  \shortintertext{and \(R\)-bilinear product}
  (\gamma\otimes f)\cup(\gamma'\otimes f') &= \gamma\cup\gamma'\otimes f f'.
\end{align}
Here \(t_{1}\),~\ldots,~\(t_{r}\) are a basis of~\(H^{2}(BT)\subset R\), and
\(a_{1}\),~\ldots,~\(a_{r}\) are representative loops of the dual basis of~\(H_{1}(T)\);
the product~\(a_{i}\cdot\gamma\) refers to the action of~\(C_{*}(T)\) on~\(C^{*}(X)\)
induced by the \(T\)-action on~\(X\).
The equivariant chain complex~\(\hCT_{*}(A,B)\) 
is the \(R\)-dual of~\eqref{eq:4:definition-CT},
\begin{equation}
  \label{eq:4:definition-hCT}
  \hCT_{*}(A,B) = \Hom_{R}(\CT^{*}(A,B),R).
\end{equation}
Equivariant cohomology and homology are defined as
\begin{align}
  \HT^{*}(A,B) &= H^{*}(\CT^{*}(A,B)), \\
  \hHT_{*}(A,B) &= H_{*}(\hCT_{*}(A,B)).
\end{align}
This definition of~\(\HT^{*}(A.B)\) is naturally isomorphic,
as an \(R\)-algebra, to the usual one based on the Borel construction~\(X_{T}\).

\subsection{Other supports}

Let \((A,B)\) be a closed \(T\)-pair in a \(T\)-space~\(X\).
We define the \emph{equivariant cohomology of~\((A,B)\) with compact supports}
by
\begin{align}
  \HTc^{*}(A,B) &= \dirlim \HT^{*}(U,V) = H^{*}(\CTc^{*}(A,B)),
  \intertext{where}
  \label{eq:definition-CTc}
  \CTc^{*}(A,B) &= \dirlim \CT^{*}(U,V) = \bigl(\dirlim C^{*}(U,V)\bigr)\otimes R,
\end{align}
and the direct limits are taken over all \(T\)-stable open neighbourhood pairs~\((U,V)\) of~\((A,B)\)
such that \(X\setminus V\) is compact.
By tautness and excision, \(\HTc^{*}(A,B)\) is easily seen to be naturally isomorphic to the Alexander--Spanier cohomology
of the closure of~\((A,B)\) in the one-point compactification of~\(X\) relative to the added point.
Hence it does not matter whether \((A,B)\) is considered as a closed \(T\)-pair in~\(X\) or in~\(A\).

The \emph{equivariant homology of~\((A,B)\) with closed supports}
is defined by taking the \(R\)-dual
of~\eqref{eq:definition-CTc},
\begin{align}
  \hCTc_{*}(A,B) &= \Hom_{R}(\CTc^{*}(A,B),R), \\
  \hHTc_{*}(A,B) &= H_{*}(\hCTc_{*}(A,B)).
\end{align}
Clearly, we have \(\HTc^{*}(A,B)=\HT^{*}(A,B)\)
and \(\hHTc_{*}(A,B)=\hHT_{*}(A,B)\) if \(X\) is compact.

\subsection{Properties}
\label{sec:properties}

We list several important properties of equivariant (co)homology,
omitting proofs that were given in~\cite{AlldayFranzPuppe}.
In Section~\ref{sec:twisted} we will extend all results of this section
to homology manifolds and (co)homology with twisted coefficients,
see Remark~\ref{rem:twisted-properties}.

\begin{assumption}
  \label{ass:4:finite}
  For the rest of this paper we assume
  that \(H^{*}(A,B)\) is a finite-dimensional \(\kk\)-vector space
  for any \(T\)-pair~\((A,B)\) for which we consider equivariant
  cohomology with closed supports or equivariant homology with compact supports.
  By Proposition~\ref{thm:4:serre-ss} below, this implies
  that both \(\HT^{*}(A,B)\) and \(\hHT_{*}(A,B)\) are finitely generated \(R\)-modules.
  (Each of the latter conditions is actually equivalent to the former.)
\end{assumption}

\begin{proposition}[Serre spectral sequence]
  \label{thm:4:serre-ss}
  \label{thm:4:ss-HT-E2}
  \label{thm:4:ss-hHT-E2}
  Let \((A,B)\) be a \(T\)-pair in~\(X\). There are spectral sequences,
  natural in~\((A,B)\), with
  \begin{align*}
    E_{1} = E_{2} &= H^{*}(A,B)\otimes R  \;\Rightarrow\;  \HT^{*}(A,B), \\
    E_{1} = E_{2}&= H_{*}(A,B)\otimes R  \;\Rightarrow\;  \hHT_{*}(A,B).    
  \end{align*}
\end{proposition}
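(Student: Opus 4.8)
The plan is to obtain both spectral sequences from one filtration of the singular Cartan model — the filtration by the cohomological degree of the polynomial factor~\(R\) — using crucially that \(R=H^{*}(BT)\) is concentrated in \emph{even} degrees. For cohomology, write \(R^{\ge p}\) for the span of the homogeneous elements of~\(R\) of degree~\(\ge p\) and set \(F^{p}\CT^{*}(A,B)=C^{*}(A,B)\otimes R^{\ge p}\). In the differential~\eqref{eq:4:definition-d-CT} the summand \(d\gamma\otimes f\) preserves the \(R\)-degree while \(\sum_{i}a_{i}\cdot\gamma\otimes t_{i}f\) raises it by~\(2\); hence each \(F^{p}\) is a subcomplex, and \(F^{\bullet}\) is a decreasing, exhaustive, \(R\)-linear filtration with \(\bigcap_{p}F^{p}=0\) and with \(F^{n+1}\CT^{n}(A,B)=0\) in every total degree~\(n\) (a summand \(C^{q}(A,B)\otimes R^{j}\) with \(q+j=n\) and \(j\ge n+1\) would need \(q<0\)). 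On the associated graded only the \(R\)-degree-preserving part of the differential survives, so \(E_{0}=\bigl(C^{*}(A,B)\otimes R,\,d_{C^{*}}\otimes\id_{R}\bigr)\) and \(E_{1}=H^{*}(A,B)\otimes R\), all natural in~\((A,B)\). Now, since \(R\) is evenly graded, \(E_{1}^{p,q}=0\) for \(p\) odd, so \(d_{1}\colon E_{1}^{p,q}\to E_{1}^{p+1,q}\) vanishes identically and \(E_{2}=E_{1}=H^{*}(A,B)\otimes R\). Being bounded in each total degree, the spectral sequence converges to \(H^{*}(\CT^{*}(A,B))=\HT^{*}(A,B)\). (For this half one could instead identify \(\HT^{*}(A,B)\) with the singular cohomology of the Borel construction of the pair and quote the Serre spectral sequence of the fibration over~\(BT\), whose \(E_{1}=E_{2}\) because \(BT\) has a CW~structure with cells only in even dimensions; but the Cartan-model argument is needed for homology in any case.)

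For homology I would dualize. As \(\CT^{*}(A,B)=C^{*}(A,B)\otimes R\) is \(R\)-free, an element of \(\hCT_{*}(A,B)=\Hom_{R}(\CT^{*}(A,B),R)\) is determined by its restriction to \(C^{*}(A,B)\otimes 1\), i.e.\ by a \(\kk\)-linear map \(C^{*}(A,B)\to R\); set \(\hat F^{p}\hCT_{*}(A,B)=\bigl\{\varphi:\varphi(C^{*}(A,B)\otimes 1)\subseteq R^{\ge p}\bigr\}\). The same inspection of~\eqref{eq:4:definition-d-CT} shows that \(\hat F^{\bullet}\) is a decreasing filtration by subcomplexes with \(\bigcap_{p}\hat F^{p}=0\), and that on its associated graded only the \(\kk\)-dual of \(d_{C^{*}}\) survives, tensored with~\(R\). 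Since \(\kk\) is a field, \(\Hom_{\kk}(-,\kk)\) is exact, so \(E_{1}=\Hom_{\kk}(H^{*}(A,B),\kk)\otimes R=H_{*}(A,B)\otimes R\), the last identification using that \(H^{*}(A,B)\) is finite-dimensional by Assumption~\ref{ass:4:finite}. Once more \(d_{1}=0\) by the evenness of~\(R\), so \(E_{2}=E_{1}=H_{*}(A,B)\otimes R\). The filtration \(\hat F^{\bullet}\) is not bounded below, but it is exhaustive and complete, and — \(H_{*}(A,B)\) being finite-dimensional — from \(E_{1}\) on there are only finitely many nonzero columns in each total degree; the spectral sequence therefore converges strongly to \(H_{*}(\hCT_{*}(A,B))=\hHT_{*}(A,B)\).

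Naturality in \((A,B)\) is automatic, since \(C^{*}(-)\), the two filtrations, and the identifications of the \(E_{1}\)-pages are all functorial. The steps showing that \(F^{\bullet}\) and \(\hat F^{\bullet}\) consist of subcomplexes, together with the two computations of the \(E_{1}\)-pages, are routine; the one point that genuinely needs care is the homology case, where one must track the grading conventions on the \(R\)-dual and, because \(\hat F^{\bullet}\) fails to be bounded below, lean on the finite-dimensionality of \(H_{*}(A,B)\) for convergence — which is also precisely where the standing hypothesis that \(\kk\) is a field (rather than merely a PID) is used, a \(\Tor\)-term otherwise obstructing \(E_{1}=H_{*}(A,B)\otimes R\).
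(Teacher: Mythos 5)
Your proof is correct, and it supplies essentially the standard argument behind the citation: the paper simply refers to eqs.~(3.5) and~(3.7) of the companion paper \cite{AlldayFranzPuppe}, where both spectral sequences are obtained from the singular Cartan model exactly as you do --- by filtering~\(\CT^{*}(A,B)=C^{*}(A,B)\otimes R\) (resp.\ its \(R\)-dual) by the polynomial degree in~\(R\), so that the \(E_{1}\)-page is \(H^{*}(A,B)\otimes R\) (resp.\ \(H_{*}(A,B)\otimes R\)), with \(d_{1}=0\) because \(R\) is evenly graded. You are also right that the homological half requires more care; the filtration~\(\hat F^{\bullet}\) is unbounded, and the appeal to Assumption~\ref{ass:4:finite} to get finitely many nonzero columns in each total degree (hence conditional plus \(RE_{\infty}=0\), hence strong convergence), as well as to identify \(\Hom_{\kk}(H^{*}(A,B),\kk)\) with \(H_{*}(A,B)\), is exactly the point that must not be skipped.
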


\begin{proof}
  These are eqs.~\eqref{eq:ss-HT-E2} and \eqref{eq:ss-hHT-E2}
  in~\cite{AlldayFranzPuppe}.
\end{proof}

\begin{proposition}[Universal coefficient theorem \citeorbitsone{Prop.~\ref*{thm:uct}}] 
  \label{thm:4:uct}
  Let \((A,B)\) be a \(T\)-pair in~\(X\). 
  There are spectral sequences,
  natural in~\((A,B)\), with
  \begin{align*}
    E_{2}^{p} &= \Ext_{R}^{p}(\HT^{*}(A,B),R) \;\Rightarrow\; \hHT_{*}(A,B), \\
    E_{2}^{p} &= \Ext_{R}^{p}(\hHT_{*}(A,B),R) \;\Rightarrow\; \HT^{*}(A,B).
  \end{align*}
\end{proposition}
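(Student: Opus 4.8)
The plan is to obtain both spectral sequences as universal coefficient spectral sequences attached to the singular Cartan model.

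The first one comes straight from the complex $\CT^{*}(A,B)=C^{*}(A,B)\otimes R$, which is a bounded-below complex of \emph{free} $R$-modules whose $R$-dual $\hCT_{*}(A,B)=\Hom_{R}(\CT^{*}(A,B),R)$ computes $\hHT_{*}(A,B)$. The hyper-$\Ext$ spectral sequence attached to a Cartan--Eilenberg resolution of $\CT^{*}(A,B)$ takes the form
\[
  E_{2}^{p}=\Ext_{R}^{p}\bigl(H^{*}(\CT^{*}(A,B)),R\bigr)=\Ext_{R}^{p}\bigl(\HT^{*}(A,B),R\bigr)\;\Rightarrow\;\hHT_{*}(A,B),
\]
the abutment being $H_{*}\bigl(\Hom_{R}(\CT^{*}(A,B),R)\bigr)=\hHT_{*}(A,B)$ with no derived-functor correction, since $\CT^{*}(A,B)$ is already a complex of projectives (and up to the reindexing forced by the cohomological grading). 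Convergence holds because $R$ is a polynomial ring in $r$~variables, whence $\Ext_{R}^{p}(-,R)=0$ for $p>r$ and only finitely many columns are nonzero; naturality in $(A,B)$ is inherited from the functoriality of Cartan--Eilenberg resolutions.

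The second spectral sequence is more delicate, because $\hCT_{*}(A,B)$ is \emph{not} a complex of projective $R$-modules: dualizing an infinitely generated free module produces an infinite product. The remedy is to replace $\CT^{*}(A,B)$ by a small model. Using Assumption~\ref{ass:4:finite} together with the Serre spectral sequence of Proposition~\ref{thm:4:serre-ss} and homological perturbation theory, one constructs a ``Hirsch--Brown model'': a differential graded $R$-module $M$ that is free of \emph{finite} rank over~$R$, together with a quasi-isomorphism $M\to\CT^{*}(A,B)$ of DG $R$-modules which is natural up to chain homotopy. As both source and target are bounded-below complexes of projectives, this quasi-isomorphism is a homotopy equivalence, hence remains one after applying $\Hom_{R}(-,R)$; thus $M^{\vee}=\Hom_{R}(M,R)$ is again a finite free DG $R$-module, and it computes $\hHT_{*}(A,B)$. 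Since $M$ is finitely generated free, biduality gives $\Hom_{R}(M^{\vee},R)\cong M$, so $H_{*}\bigl(\Hom_{R}(M^{\vee},R)\bigr)=\HT^{*}(A,B)$. Running the spectral sequence of the previous paragraph with $M^{\vee}$ in place of $\CT^{*}(A,B)$ now yields
\[
  E_{2}^{p}=\Ext_{R}^{p}\bigl(H^{*}(M^{\vee}),R\bigr)=\Ext_{R}^{p}\bigl(\hHT_{*}(A,B),R\bigr)\;\Rightarrow\;\HT^{*}(A,B),
\]
with convergence again from the finite global dimension of $R$ and naturality from the functoriality of the model and of the resolutions.

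The one genuinely nontrivial input is the finite free model $M$, together with the verification that it is sufficiently natural in $(A,B)$ for the resulting spectral sequence to be natural; I expect this to be the main obstacle, and it is precisely the point at which the finiteness hypothesis on $H^{*}(A,B)$ is used. Everything else is formal homological algebra: finite global dimension of $R$ for convergence, invariance of homotopy equivalences under additive functors, and biduality for finitely generated free modules.
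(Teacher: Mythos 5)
Your proposal is correct and, as far as one can tell, follows essentially the same route as the proof in~\cite{AlldayFranzPuppe} that the paper cites: the first spectral sequence is the hyper-$\Ext$ spectral sequence of the Cartan model $\CT^{*}(A,B)$, which being a complex of free $R$-modules computes $\hHT_{*}(A,B)$ via $\Hom_{R}(-,R)$; the second is obtained by replacing $\CT^{*}(A,B)$ by a finitely generated free model (the minimal Hirsch--Brown model, whose existence and relevant properties are recorded in~\cite{AlldayPuppe:1993} and invoked elsewhere in the paper, e.g.\ in the proof of Theorem~\ref{thm:PAL-A-B}), so that biduality identifies the dual of $M^{\vee}$ with $M$ and one can run the same spectral sequence. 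Your two flagged caveats -- convergence and naturality -- are handled exactly as you describe: convergence from the finite global dimension of $R$, and naturality because the Hirsch--Brown model is natural up to chain homotopy, which is enough for a spectral sequence from the $E_{2}$~page on.
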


For a multiplicative subset~\(S\subset R\) and a \(T\)-space~\(X\), define the \(T\)-stable subset
\begin{equation}
  X^{S} = \bigl\{\, x\in X \bigm| S\cap\ker(H^{*}(BT)\to H^{*}(BT_{x}))=\emptyset\,\bigr\} \subset X.
\end{equation}
It is closed in~\(X\), \cf~\cite[p.~132]{AlldayPuppe:1993}.
For example, \(X^{S}=X^{T}\) if \(\Char\kk=0\) and \(S\) contains all non-zero linear polynomials.

\begin{proposition}[Localization theorem]
  \label{thm:localization-thm-homology}
  Let \((A,B)\) be a \(T\)-pair in~\(X\), and let \(S\subset R\) be a multiplicative subset.
  Then the inclusion~\(X^{S}\hookrightarrow X\) induces isomorphisms
  of \(S^{-1}R\)-modules
  \begin{align*}
    S^{-1}\HT^{*}(A,B) &\to S^{-1}\HT^{*}(A^{S},B^{S}), \\
    S^{-1}\hHT_{*}(A^{S},B^{S}) &\to S^{-1}\hHT_{*}(A,B).
  \end{align*}
\end{proposition}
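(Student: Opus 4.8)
The plan is to establish the cohomology isomorphism first — it is essentially the classical Borel localization theorem in the present setting — and then to transport it to homology through the universal coefficient spectral sequence of Proposition~\ref{thm:4:uct}.

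For the cohomology statement I would first reduce to the absolute case: for every $T$-space $Y$ in our category the restriction map $S^{-1}\HT^*(Y)\to S^{-1}\HT^*(Y^S)$ is an isomorphism of $S^{-1}R$-modules; equivalently, $S^{-1}\HT^*(Y,Y^S)=0$. To prove this I would stratify $Y$ by its finitely many orbit types and run the long exact cohomology sequences of the resulting $T$-stable closed filtration, reducing to the contribution of a single orbit type $(K)$; there $\HT^*(T/K)\cong H^*(BK)$ with $R$-module structure via the restriction $H^*(BT)\to H^*(BK)$, and this module is annihilated upon inverting $S$ exactly when some $s\in S$ lies in $\ker\bigl(H^*(BT)\to H^*(BK)\bigr)$ — that is, exactly for the orbit types that do not occur on $Y^S$. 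Under our standing assumptions singular and Alexander--Spanier cohomology agree, and sheaf cohomology over the finite-dimensional, paracompact space $Y/T$ commutes with the filtered colimit $S^{-1}(-)$, so the argument runs in the stated generality; alternatively one may cite \cite{AlldayPuppe:1993} or \cite{AlldayFranzPuppe}. The relative statement then follows by applying the five lemma to the localized long exact cohomology sequences of the pairs $(A,B)$ and $(A^S,B^S)$, using that $A^S=A\cap X^S$ and $B^S=B\cap X^S$ are the $S$-loci of the $T$-spaces $A$ and~$B$, so that the absolute case applies to each of $A$ and~$B$.

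To pass to homology I would feed the inclusion of $T$-pairs $(A^S,B^S)\hookrightarrow(A,B)$ into the universal coefficient spectral sequences. By Assumption~\ref{ass:4:finite} the modules $\HT^*(A,B)$, $\HT^*(A^S,B^S)$, $\hHT_*(A,B)$ and $\hHT_*(A^S,B^S)$ are finitely generated over the Noetherian ring~$R$, hence so is every term of these spectral sequences, which are moreover bounded since $\Ext_R^p(-,R)=0$ for $p>r$; as localization is exact it commutes with these spectral sequences and with the finite abutment filtrations. Applying $S^{-1}(-)$ to the induced map of spectral sequences and using the natural identification $S^{-1}\Ext_R^p(M,R)\cong\Ext_{S^{-1}R}^p(S^{-1}M,S^{-1}R)$ for finitely generated~$M$, the $E_2$ level becomes $\Ext_{S^{-1}R}^p$ applied to the restriction map $S^{-1}\HT^*(A,B)\to S^{-1}\HT^*(A^S,B^S)$, which is an isomorphism by the cohomology case. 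Hence the comparison is an isomorphism on $E_2$, therefore on every later page and on $E_\infty$, and finally — the abutment filtrations being finite — on the abutments, which gives the desired isomorphism $S^{-1}\hHT_*(A^S,B^S)\to S^{-1}\hHT_*(A,B)$ of $S^{-1}R$-modules.

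The hard part will be the cohomology case in full topological generality: with no slice theorem available, the reduction to a single orbit type must rely only on the finiteness of the set of isotropy groups and on the orbit-type stratification of~$Y$, together with the good behaviour of Alexander--Spanier/sheaf cohomology under localization. Once that (classical) input is granted, the transfer to homology is formal; the only point there requiring care is that the singular Cartan model is not finitely generated over~$R$ at the chain level, which is why one argues on the $E_2$ page, where Assumption~\ref{ass:4:finite} supplies finite generation.
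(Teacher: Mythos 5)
Your proof is correct and follows essentially the same route as the paper: cite (or re-derive) the classical Borel localization theorem for equivariant cohomology, then transport it to equivariant homology by localizing the universal coefficient spectral sequence and using the exactness of localization together with the identification $S^{-1}\Ext_R(M,R)\cong\Ext_{S^{-1}R}(S^{-1}M,S^{-1}R)$ for finitely generated $M$. The paper simply cites Allday--Puppe for the cohomology case rather than sketching the orbit-type stratification argument, but the key step — the passage to homology via the UCSS — is identical.
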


\begin{proof}
  The localization theorem for equivariant cohomology with closed supports
  is classical, \cf~\cite[Ch.~3]{AlldayPuppe:1993}.
  (Recall that only finitely many orbit types occur in~\(X\).)
  
  By the universal coefficient theorem
  there is a spectral sequence converging to~\(\hHT_{*}(A,B)\)
  with \(E_{2}\)~page~\(\Ext_{R}(\HT^{*}(A,B),R)\), and similarly
  for the pair~\((A^{S},B^{S})\).
  The inclusion~\(X^{S}\hookrightarrow X\) gives rise to a map of spectral sequences,
  which on the \(E_{2}\)~pages is the canonical map  
  \begin{equation}
    \label{eq:Ext-S}
    \Ext_{R}(\HT^{*}(A^{S},B^{S}),R) \to \Ext_{R}(\HT^{*}(A,B),R).
  \end{equation}
  Since localization is an exact functor, the \(S\)-localization of~\eqref{eq:Ext-S} is the map
  \begin{equation}
    \Ext_{S^{-1}R}(S^{-1}\HT^{*}(A^{S},B^{S}),S^{-1}R) \to \Ext_{S^{-1}R}(S^{-1}\HT^{*}(A,B),S^{-1}R),
  \end{equation}
  which is an isomorphism by the cohomological localization theorem.
  Hence, the localization of~\(\hHT_{*}(A^{S},B^{S}) \to \hHT_{*}(A,B)\)
  is an isomorphism as well.
\end{proof}

Let \(K\subset T\) be a subtorus, say of rank~\(p\), with quotient~\(L=T/K\).
In this case we have canonical morphisms of algebras
\begin{equation}
  \label{eq:def-RK-RL}
  H^{*}(BL)=R_{L}=\kk[t_{p+1},\dots,t_{r}]\to R\to H^{*}(BK)=R_{K}=\kk[t_{1},\dots,t_{p}].
\end{equation}
Moreover, any choice of splitting~\(T\cong K\times L\) defines an isomorphism~\(R=R_{K}\otimes R_{L}\).

\begin{proposition}
  \label{thm:action-trivial}
  Let \(K\subset T\) be a subtorus with quotient~\(L=T/K\).
  Let \((A,B)\) be a closed \(T\)-pair in~\(X\)
  such that \(K\) acts trivially on~\(A\setminus B\).
  Then there are isomorphisms of \(R\)-modules
  \begin{align*}
    \HT^{*}(A,B) &= H_{L}^{*}(A,B)\otimes_{R_{L}} R, \\
    \hHT_{*}(A,B) &= H^{L}_{*}(A,B)\otimes_{R_{L}} R.
  \end{align*}
  The result holds for any \(T\)-pair~\((A,B)\) if \(K\) acts trivially on all of~\(A\).
\end{proposition}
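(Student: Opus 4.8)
The plan is to establish first the special case in which $K$ acts trivially on all of~$A$ --- this already gives the last sentence of the proposition --- and then to reduce the general assertion to it by excision and tautness. Fix a splitting $T\cong K\times L$ and choose the basis $t_{1},\dots,t_{r}$ of $H^{2}(BT)$ and its dual loops $a_{1},\dots,a_{r}$ compatibly with it, so that $R=R_{K}\otimes R_{L}$ with $R_{K}=\kk[t_{1},\dots,t_{p}]$, $R_{L}=\kk[t_{p+1},\dots,t_{r}]$ as in~\eqref{eq:def-RK-RL} and so that $a_{1},\dots,a_{p}$ are loops in $K\times\{e\}$; by \citeorbitsone{Prop.~\ref*{thm:hHT-independent-basis}} this affects neither $\HT^{*}(A,B)$ nor $\hHT_{*}(A,B)$. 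Assume $K$ acts trivially on~$A$. Then $a_{i}\cdot\gamma=0$ for $1\le i\le p$ and $\gamma\in C^{*}(A)$: the action map $\theta\colon T\times A\to A$ factors as $\bar\theta\circ(\mathrm{pr}_{L}\times\id_{A})$ through the $L$-action~$\bar\theta$, so by naturality of the slant product $a_{i}\cdot\gamma=\theta^{*}\gamma/a_{i}=\bar\theta^{*}\gamma/(\mathrm{pr}_{L})_{*}a_{i}$, and $(\mathrm{pr}_{L})_{*}a_{i}$ is a sum of constant --- hence degenerate --- singular $1$-simplices, which vanish in \emph{normalized} chains. As $C^{*}(A,B)$ is a $C_{*}(T)$-stable subcomplex of $C^{*}(A)$, the operator $a_{i}\cdot-$ vanishes on it too, so in the differential~\eqref{eq:4:definition-d-CT} of $\CT^{*}(A,B)=\bigl(C^{*}(A,B)\otimes R_{L}\bigr)\otimes_{R_{L}}R$ only the summands with $i>p$ survive. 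Thus $\CT^{*}(A,B)$, with this differential, is the base change along the free extension $R_{L}\to R$ of the $L$-equivariant singular Cartan model $C^{*}(A,B)\otimes R_{L}$, and since cohomology commutes with this flat base change, $\HT^{*}(A,B)\cong H_{L}^{*}(A,B)\otimes_{R_{L}}R$. Dualizing via $\Hom_{R}(-,R)$ --- and using the finite generation of $H_{L}^{*}(A,B)$ (Assumption~\ref{ass:4:finite}) over the Noetherian ring $R_{L}$, together with the freeness of $R$ over~$R_{L}$, to commute $\Hom$ past the base change --- gives likewise $\hHT_{*}(A,B)\cong H^{L}_{*}(A,B)\otimes_{R_{L}}R$.

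Now let $(A,B)$ be a closed $T$-pair with $K$ acting trivially on $A\setminus B$ only. Since $T$ is compact, the $T$-stable open neighbourhoods~$U$ of~$B$ in~$A$ are cofinal among all open neighbourhoods; fix such a~$U$. As $\overline B=B\subset U=\mathring U$, excision gives $\HT^{*}(A,U)\cong\HT^{*}(A\setminus B,U\setminus B)$, and similarly for $H_{L}^{*}$ and for the homology modules. Because $A$ is closed in~$X$, the set $A\setminus B$ is again a locally compact, locally contractible $T$-space (Remark~\ref{rem:quotient}) on which $K$ acts trivially, so the case already treated applies to $(A\setminus B,U\setminus B)$ and yields $\HT^{*}(A\setminus B,U\setminus B)\cong H_{L}^{*}(A\setminus B,U\setminus B)\otimes_{R_{L}}R$; combining, $\HT^{*}(A,U)\cong H_{L}^{*}(A,U)\otimes_{R_{L}}R$ naturally in~$U$. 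By the tautness of the closed subset~$B$ in the paracompact space~$A$ (singular and Alexander--Spanier cohomology coinciding, by our standing assumptions), the canonical $C_{*}(T)$-equivariant map $\dirlim_{U}C^{*}(A,U)\to C^{*}(A,B)$ is a quasi-isomorphism, so the induced map of singular Cartan models is an isomorphism on the $E_{1}$-pages of the Serre spectral sequences of Proposition~\ref{thm:4:serre-ss}, hence on their abutments: $\dirlim_{U}\HT^{*}(A,U)\cong\HT^{*}(A,B)$, and likewise $\dirlim_{U}H_{L}^{*}(A,U)\cong H_{L}^{*}(A,B)$. Since filtered colimits commute with $-\otimes_{R_{L}}R$, passing to the colimit gives $\HT^{*}(A,B)\cong H_{L}^{*}(A,B)\otimes_{R_{L}}R$; the homology isomorphism follows by the dual argument, the inverse limit that replaces the colimit being Mittag--Leffler --- its transition maps are surjective with free cokernels --- so that homology passes through it.

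The trivial-action case is essentially bookkeeping, once one observes that normalized chains annihilate the degenerate loops coming from the $K$-directions. The main obstacle is the tautness step of the general case: lifting the classical isomorphism $\dirlim_{U}H^{*}(A,U)\cong H^{*}(A,B)$ to the equivariant Cartan model --- handled through the Serre spectral sequence and the commutation of filtered colimits with $-\otimes R$ and with (co)homology --- and, on the homology side, checking that the dual inverse limit still computes $\hHT_{*}(A,B)$, for which the Mittag--Leffler condition above is decisive.
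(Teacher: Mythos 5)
Your proof takes a genuinely different route from the paper: you first establish the case where $K$ acts trivially on all of $A$ and then reduce the closed-pair case to it via tautness and excision, whereas the paper goes the other way, using tautness at the outset to replace $\CT^{*}(A,B)$ by $\dirlim\CT^{*}(A\setminus B,U\setminus B)$ (a complex on which $C_{*}(K)$ acts trivially because chains are normalized), applying the K\"unneth formula there, and only afterwards deducing the ``any $T$-pair, $K$ trivial on $A$'' case by the five-lemma. Your cohomological argument is sound in both steps: the degeneracy of $(\mathrm{pr}_{L})_{*}a_{i}$ in normalized chains is exactly the paper's observation, the K\"unneth formula over a field does not need finiteness, and $-\otimes_{R_{L}}R$ commutes with the filtered colimit over~$U$.

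The homological side, however, has two genuine gaps. First, in the trivial-action case you want to commute $\Hom_{R}(-,R)$ past the base change $-\otimes_{R_{L}}R$ at the chain level, but the finite generation you invoke is that of $H_{L}^{*}(A,B)$, not of the dg module $C^{*}(A,B)\otimes R_{L}$, which is typically infinite-dimensional degree-wise; the correct way to make this move is the one the paper states just before its proof and leans on throughout: a quasi-isomorphism between dg $R$-modules that are \emph{free} as $R$-modules is a chain homotopy equivalence over $R$, and homotopy equivalences are preserved by $\Hom_{R}(-,R)$ (equivalently, replace $\CT^{*}(A,B)$ by its finitely generated free Hirsch--Brown model). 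Second, in the reduction to the closed-pair case, passing to the inverse limit is more delicate than stated: the Mittag--Leffler hypothesis you invoke is about the transition maps of the chain complexes $\hCT_{*}(A,U)$, but what you need in order to conclude $\invlim\hHT_{*}(A,U)=\hHT_{*}(A,B)$ is control of $\lim^{1}$ on the \emph{homology} groups, and moreover $-\otimes_{R_{L}}R$ is a tensor by an infinite-rank free module and does not commute with inverse limits in general. (There is also a subsidiary issue that the finiteness Assumption~\ref{ass:4:finite} is stated for $(A,B)$, not for each $(A\setminus B,U\setminus B)$, so quoting the trivial-action homology statement stage-by-stage is not automatically licensed.) All of these problems disappear once one uses the homotopy-equivalence fact: the tautness map $\CT^{*}(A,B)\to\dirlim\CT^{*}(A\setminus B,U\setminus B)$ is a homotopy equivalence of dg $R$-modules, so one may dualize it directly without ever forming an inverse limit, which is exactly how the paper's argument avoids the issues above.
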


In the proof below as well as in that of Proposition~\ref{thm:locally-free-action}
we will use the following fact, \cf~\cite[Cor.~B.1.13]{AlldayPuppe:1993}:
Let \(\phi\colon M\to N\) be a quasi-isomorphism of dg~\(R\)-modules.
If \(M\)~and~\(N\) are free as \(R\)-modules,
then \(\phi\) is a homotopy equivalence over~\(R\).

\begin{proof}
  Since \(B\) is closed in~\(A\), we have,
  by tautness and excision, a quasi-iso\-mor\-phism of dg \(R\)-modules
  \begin{equation}
    \label{eq:CTAB-dirlim-quiso}
    \CT^{*}(A,B) \to \dirlim \CT^{*}(A\setminus B,U\setminus B) = \Bigl(\dirlim C^{*}(A\setminus B,U\setminus B)\Bigr)\otimes R,
  \end{equation}
  where the direct limit is taken over all \(T\)-stable open sets~\(U\supset B\).
  Hence we may work with this direct limit, which we denote by~\(M\).
  
  Now choose a splitting~\(T=K\times L\).
  By~\citeorbitsone{Prop.~\ref*{thm:hHT-independent-basis}}, we may assume
  that the representatives~\(a_{i}\in C_{1}(T)\) appearing in the ``Cartan differential''~\eqref{eq:4:definition-d-CT}
  are chosen such that \(a_{1}\),~\ldots,~\(a_{p}\in C_{1}(K)\) and \(a_{p+1}\),~\ldots,~\(a_{r}\in C_{1}(L)\).
  Since we are using normalized singular (co)chains,
  \(C_{*}(K)\) acts trivially on each~\(C^{*}(A\setminus B,U\setminus B)\).
  The differential on~\(M\) therefore takes the form
  \begin{equation}
    d(\gamma\otimes f) = d\gamma\otimes f +\sum_{i=p+1}^{r} a_{i}\cdot\gamma\otimes t_{i}f,
  \end{equation}
  which implies
  \begin{equation}
    \HT^{*}(A,B)=H_{L}^{*}(A,B)\otimes H^{*}(BK)
    = H_{L}^{*}(A,B)\otimes_{R_{L}} R
  \end{equation}
  by the Künneth formula.

  By the remark made above, the quasi-isomorphism~\eqref{eq:CTAB-dirlim-quiso}
  is a homotopy equivalence, which is preserved by the functor~\(\Hom_{R}(-,R)\).
  For equivariant homology we can therefore argue analogously.

  The last claim follows by the five-lemma from the previous one, applied to~\(A\) and~\(B\) separately,
  and the long exact sequence of the pair.
\end{proof}

At the other extreme, we have the following:

\begin{proposition}
  \label{thm:locally-free-action}
  Let \(K\subset T\) be a subtorus, say of rank~\(p\), with quotient~\(L=T/K\).
  Let \((A,B)\) be closed a \(T\)-pair in~\(X\)
  such that \(K\) acts freely on~\(A\setminus B\) (or just locally freely in case~\(\Char\kk=0\)).
  Then \(H^{*}(A/K,B/K)\) is finite-dimensional, and there are isomorphisms of
  \(R_{L}\)-modules
  \begin{align*}
    \HT^{*}(A,B) &= H_{L}^{*}(A/K,B/K), \\
    \hHT_{*}(A,B) &= H^{L}_{*-p}(A/K,B/K).
  \end{align*}
  The result holds for any \(T\)-pair~\((A,B)\) if \(K\) acts (locally) freely on all of~\(A\).
\end{proposition}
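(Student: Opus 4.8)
The plan is to mimic the proof of Proposition~\ref{thm:action-trivial}, with ``\(K\) acts trivially'' replaced by ``\(K\) acts locally freely'' and the Künneth formula replaced by a comparison with a Koszul complex. First, exactly as in the proof of Proposition~\ref{thm:action-trivial}, tautness and excision give a quasi-isomorphism of dg \(R\)-modules \(\CT^{*}(A,B)\to M:=\dirlim C^{*}(A\setminus B,U\setminus B)\otimes R\), where \(U\) ranges over the \(T\)-stable open neighbourhoods of \(B\) in \(A\). Both sides have the shape \(V\otimes_{\kk}R\) for a graded \(\kk\)-vector space \(V\), hence are free over \(R\), so by the fact recalled just before that proof the quasi-isomorphism is a homotopy equivalence over \(R\); applying \(\Hom_{R}(-,R)\) we see it also computes equivariant homology correctly, so it suffices to analyse \(M\), and now \(K\) acts locally freely at every point that occurs. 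Write \(\bar M:=\dirlim C^{*}\bigl((A\setminus B)/K,(U\setminus B)/K\bigr)\otimes R_{L}\) for the corresponding \(L\)-Cartan model of \((A/K,B/K)\); by tautness downstairs \(H^{*}(\bar M)=H_{L}^{*}(A/K,B/K)\). Fix a splitting \(T=K\times L\) and, by~\citeorbitsone{Prop.~\ref*{thm:hHT-independent-basis}}, representatives \(a_{1},\dots,a_{p}\in C_{1}(K)\) and \(a_{p+1},\dots,a_{r}\in C_{1}(L)\). Since the projection \(\pi\colon A\setminus B\to(A\setminus B)/K\) is \(K\)-invariant and the chains are normalised, \(a_{i}\cdot\pi^{*}\bar\gamma=0\) for \(i\le p\), so \(\pi^{*}\colon\bar M\to M\), \(\bar\gamma\otimes f\mapsto\pi^{*}\bar\gamma\otimes f\), is an \(R_{L}\)-linear chain map compatible with the Cartan differentials --- though not \(R\)-linear, as \(R_{K}\) acts trivially on \(\bar M\) but not on \(M\).

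The crucial step is to promote \(\pi^{*}\) to a quasi-isomorphism of dg \(R\)-modules \(\Phi\colon\bigl(\bar M\otimes_{R_{L}}F,\,d_{\mathrm{tw}}\bigr)\to M\), where \(F\) is the Koszul complex on \(t_{1},\dots,t_{p}\) over \(R\) (a free resolution of \(R_{L}=R/(t_{1},\dots,t_{p})\)) and \(d_{\mathrm{tw}}\) is the differential of the tensor product twisted by cochain representatives of the characteristic classes \(x_{1},\dots,x_{p}\in H^{2}_{L}(A/K,B/K)\) of the (locally free) principal‑type fibration \(A\setminus B\to(A\setminus B)/K\). On the \(F\)-degree‑\(0\) part one takes \(\Phi=\pi^{*}\) (which is already a chain map by the observation above), and \(\Phi\) is extended to higher \(F\)-degree by induction; the obstruction to continuing the induction at each stage is a class that vanishes precisely because \(K\) acts locally freely on \(A\setminus B\) --- this is the chain‑level manifestation of the classical fact that the \(K\)-equivariant cohomology of a (locally, in characteristic \(0\)) free action is the ordinary cohomology of the orbit space, together with coherence of the chosen null‑homotopies. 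Since \(\bar M\otimes_{R_{L}}F\) is again \(R\)-free, \(\Phi\) is a homotopy equivalence over \(R\). I expect this construction to be the main obstacle: it is the only point at which the topology of the action enters essentially, everything else being formal.

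From \(\Phi\) we read off the stated isomorphisms. The twist‑filtration spectral sequence of \(\bigl(\bar M\otimes_{R_{L}}F,d_{\mathrm{tw}}\bigr)\) has \(E_{2}\)-page the Koszul homology of the sequence \((t_{i}-x_{i})_{1\le i\le p}\) on \(H_{L}^{*}(A/K,B/K)\otimes R_{K}\), which after the change of variables \(t_{i}\mapsto t_{i}-x_{i}\) collapses to \(H_{L}^{*}(A/K,B/K)\); hence \(\HT^{*}(A,B)=H^{*}(M)\cong H_{L}^{*}(A/K,B/K)\) as \(R_{L}\)-modules, the first isomorphism. For finiteness, the localization theorem (Proposition~\ref{thm:localization-thm-homology}) and the standing assumption of finitely many isotropy groups show that \(\operatorname{Supp}_{R}\HT^{*}(A,B)=\operatorname{Supp}_{R}H^{*}(M)\) is contained in the union of the finitely many linear subvarieties of \(\operatorname{Spec}R\) cut out by the ideals \(\ker\bigl(R\to H^{*}(BT_{x})\bigr)\), \(x\in A\setminus B\); local freeness forces each composition \(R_{L}\to R\to H^{*}(BT_{x})\) to be surjective, so \(R/\ann_{R}\bigl(\HT^{*}(A,B)\bigr)\) is module‑finite over \(R_{L}\), and since \(\HT^{*}(A,B)\) is finitely generated over \(R\) by Assumption~\ref{ass:4:finite} it is finitely generated over \(R_{L}\) --- equivalently \(H^{*}(A/K,B/K)\) is finite‑dimensional (Serre spectral sequence, Proposition~\ref{thm:4:serre-ss}, for the \(L\)-pair \((A/K,B/K)\)). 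For homology, apply \(\Hom_{R}(-,R)\) to the homotopy equivalence \(M\simeq\bar M\otimes_{R_{L}}F\); as \(t_{1},\dots,t_{p}\) is a regular sequence in \(R\), \(\Hom_{R}(F,R)\) is a resolution of \(\Ext^{p}_{R}(R_{L},R)\cong R_{L}\) placed in homological degree \(p\) (the internal shift being bookkeeping governed by the grading conventions, as in the Koszul section of~\cite{AlldayFranzPuppe}), and a spectral‑sequence argument dual to the one above gives \(\hHT_{*}(A,B)\cong H^{L}_{*-p}(A/K,B/K)\) as \(R_{L}\)-modules. Finally, for an arbitrary \(T\)-pair \((A,B)\) with \(K\) acting locally freely on all of \(A\), one applies the cases just proved to \(A\) and to \(B\) (regarded as \(T\)-spaces in their own right) and combines them using the short exact sequences \(0\to\CT^{*}(A,B)\to\CT^{*}(A)\to\CT^{*}(B)\to0\) and their analogues downstairs, the naturality of \(\pi^{*}\), and the five lemma.
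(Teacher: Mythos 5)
Your overall strategy is genuinely different from the paper's. The paper treats the cohomological isomorphism $\HT^{*}(A,B)\cong H_{L}^{*}(A/K,B/K)$ as a classical input and deduces the homological one entirely by homological algebra: it forms the \v Cech bicomplex $\Cm^{*,*}(\CT^{*}(A,B))$, shows via the localization theorem and an induction on (connected) orbit types in $A\setminus B$ that the canonical map $\Cm^{*,*}\to\Cn^{*,*}$ is a quasi-isomorphism (Lemma~\ref{thm:Hn-Hm-iso} is the inductive step), and then invokes local duality $\Hm^{*}(M)=H^{*}(\Hom_{R}(M,R))^{\vee}[-r]$ to convert this into the shifted isomorphism $\hHT_{*}(A,B)\cong H^{L}_{*-p}(A/K,B/K)$. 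Crucially, this route never touches any chain-level multiplicative structure. Your route instead tries to build an explicit $R$-linear dg model $\Phi\colon\bigl(\bar M\otimes_{R_{L}}F,d_{\mathrm{tw}}\bigr)\to M$ with $F$ the Koszul complex on $t_{1},\dots,t_{p}$ and a differential twisted by cochain representatives of the classes $x_{i}\in H_{L}^{2}(A/K,B/K)$.

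The gap sits exactly where you flagged it, but it is more serious than ``choose coherent nullhomotopies''. Before you even get to constructing $\Phi$, you must explain why $(\bar M\otimes_{R_{L}}F,d_{\mathrm{tw}})$ is a complex. Writing the twist as $d_{\mathrm{tw}}(\bar\gamma\otimes e_{I}\otimes f)=\dots+\sum_{i\in I}\pm\,\tilde x_{i}\cup\bar\gamma\otimes e_{I\setminus\{i\}}\otimes f$, the identity $d_{\mathrm{tw}}^{2}=0$ forces $\tilde x_{i}\cup\tilde x_{j}-\tilde x_{j}\cup\tilde x_{i}=0$ at the cochain level, because the Koszul signs on $\Lambda(\kk^{p})$ antisymmetrize the cross terms. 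Singular cup product is only homotopy-commutative, so with singular cochains this fails, and one is immediately pushed into an $A_{\infty}$/Hirsch-algebra formalism with an infinite tower of higher corrections and compatibilities; this is precisely the kind of machinery the singular Cartan model is designed to avoid. (In a genuinely commutative model --- de Rham forms for a smooth action, or Sullivan's PL forms --- the twist does square to zero and your plan would go through; but the paper needs the argument in the singular setting.) Related to this, the inductive construction of $\Phi$ is not really an obstruction-theory argument that can be quoted as ``the obstruction vanishes'': you would have to exhibit the nullhomotopies, verify that they assemble into a chain map, and then still prove that $\Phi$ is a quasi-isomorphism --- the remark that both sides are $R$-free only upgrades an \emph{already known} quasi-isomorphism to a homotopy equivalence, it does not produce one. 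The paper's proof sidesteps all of this by using the localization theorem as the sole topological input and never forming any chain-level products.

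The peripheral parts of your argument are fine and parallel to the paper: the finiteness of $H^{*}(A/K,B/K)$ via localization (your support-theoretic phrasing is a valid variant of the paper's ``killed by a power of each $t_{i}$'' argument), and the reduction of the last assertion to the absolute cases of $A$ and $B$ via the five lemma. The duality step $\Hom_{R}(F,R)\simeq R_{L}[-p]$ producing the degree shift by $-p$ is also the right mechanism and matches the $[-(r-p)]$ vs.\ $[-r]$ bookkeeping in the paper. So the shape of the conclusion you want to draw is correct; what is missing is a rigorous construction of the twisted Koszul model over singular cochains, and that is a substantial, not merely technical, hole.
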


The cohomological part is well-known, cf~\cite[Prop.~3.10.9]{AlldayPuppe:1993}.
That a degree shift by~\(-p\) is necessary for the homological part can already
be seen by considering~\(K=T=X\):
In this case one has \(\hHT_{*}(X)=\kk[-r]=H_{*-r}(X/T)\), \cf~\citeorbitsone{Ex.~\ref*{ex:homogeneous-space}}.

Geometrically, the homological isomorphism
can be understood as a transfer 
for the quotient map~\(X\to X/K\).
Since in the singular setting it is delicate to define a transfer map or integration over the fibre
on the (co)chain level, we will follow an algebraic approach
and postpone the geometrical aspects to our discussion of Poincaré--Alexander--Lefschetz duality
(Remark~\ref{rem:locally-free}).
The homology isomorphism can also be viewed
as a version of the Adams isomorphism in equivariant stable homotopy theory
(see~\cite[\S II.7]{LewisMaySteinberger:1986} or~\cite[\S XVI.5]{May:1996})
in our algebraic context.

The proof of Proposition~\ref{thm:locally-free-action} requires some preparation.
Recall that \(\m=(t_{1},\dots,t_{r})\) is the maximal graded ideal in~\(R\).
In the proof below we will use the local duality isomorphism
\begin{equation}
  \label{eq:4:local-duality}
  \Hm^{j}(M) = \Ext_{R}^{r-j}(M,R[2r])^{\vee},
\end{equation}
which is natural in the \(R\)-module~\(M\),
see for instance \cite[Thm.~A1.9]{Eisenbud:2005}
(where the generators of the polynomial ring are assigned the degree~\(1\), not~\(2\)).
The symbol~``\({}^{\vee}\)'' in~\eqref{eq:4:local-duality} denotes the dual of a graded \(\kk\)-vector space.
We will also need the Čech complex 
computing \(\Hm^{*}(M)\)
by means of 
some generators
of~\(\m\) as in~\cite[p.~189]{Eisenbud:2005}.

More generally, we consider the Čech complex
for a dg~\(R\)-module~\(M\).
Thus we obtain a bicomplex~\(\Cm^{*,*}(M)\) with
first differential~\(\dI\) coming from~\(M\)
and second differential~\(\dII\)
coming from the Čech complex for the canonical generators~\(t_{1}\),~\ldots,~\(t_{r}\).
An element in~\(\Cm^{i,j}(M)\) is a sum of elements of degree~\(j\)
in the \(i\)-fold localizations in this Čech complex.
While \(j\) is unbounded in both directions,
we have \(0\le i\le r\), so that
both filtrations of the bicomplex are regular \cite[p.~452]{Bredon:1997}.
Hence both associated spectral sequences converge to~\(\Hm^{*}(M)\),
the cohomology of~\(\Cm^{*,*}(M)\) with respect to the total differential.

In the first bicomplex spectral sequence we have
\begin{align}
  \EI_{1} &= \Cm^{*,*}(H^{*}(M)), \\ 
  \label{eq:ss2}
  \EI_{2} &= \Hm^{*}(H^{*}(M)) 
\end{align}
since the cohomology of the localization of~\(M\) is the localization of the cohomology.

Taking the other bicomplex spectral sequence, we get
\begin{equation}
  \label{eq:ss1}
  \EII_{1} = \HmII^{*}(M) 
\end{equation}
where \(\HmII^{*}(M)\) means the cohomology of~\(\Cm^{*,*}(M)\) with respect to the differential~\(\dII\),
that is, the local cohomology of the \(R\)-module~\(M\) with trivial differential.

Suppose that \(M\) is finitely generated and free as an \(R\)-module.
By local duality one then has that the \(E_{1}\)~page
\begin{equation}
  \EII_{1}^{k} = \HmII^{k}(M) =
  \begin{cases}
    \Hom_{R}(M,R[2r])^{\vee} & \text{if \(k=r\),} \\
    0 & \text{otherwise}
  \end{cases}
\end{equation}
is concentrated in the column~\(k=r\), and therefore
\begin{equation}
  \Hm^{*}(M) = H^{*}(\Hom_{R}(M,R[2r]))^{\vee}[r] = H^{*}(\Hom_{R}(M,R))^{\vee}[-r].
\end{equation}

If \(M\) is \(R\)-homotopy equivalent to some~\(M'\), then so are \(\Cm^{*,*}(M)\) and \(\Cm^{*,*}(M')\),
hence \(\Hm^{*}(M)\cong\Hm^{*}(M')\) as \(R\)-modules. In particular, if \(M=\CT^{*}(X)\), then it is
\(R\)-homotopy equivalent to a dg~\(R\)-module~\(M'\) which is finitely generated and free as an \(R\)-module.
We therefore conclude that
\begin{align}
  \Hm^{*}(\CT^{*}(X)) &= H^{*}(\Hom_{R}(\CT^{*}(X),R))^{\vee}[-r] \\
  &= H^{*}(\hCT_{*}(X))^{\vee}[-r] = \hHT_{*}(X)^{\vee}[-r].
\end{align}

Let \(\nn=(t_{p+1},\dots,t_{r})\) be the maximal graded ideal of~\(R_{L}\).
Using the canonical generators,
we can similarly define \(\Cn^{*,*}(-)\) and \(\Hn^{*}(-)\)
for dg~\(R_{L}\)-modules, hence a~fortiori for dg~\(R\)-modules.
Since these generators are among the chosen generators of~\(\m\),
we have a canonical map of bicomplexes
\begin{equation}
  \Cm^{*,*}(M) \to \Cn^{*,*}(M)
\end{equation}
for any dg~\(R\)-module~\(M\), inducing a map of \(R\)-modules~\(\Hm^{*}(M)\to\Hn^{*}(M)\).

\begin{lemma}
  \label{thm:Hn-Hm-iso}
  Let \((A,B)\) be a closed \(T\)-pair in~\(X\).
  Assume that \(K\) acts freely on~\(A\setminus B\), and that all~\(x\in A\setminus B\) have the same isotropy group, say \(K'\).
  Then the map~\(\Hm^{*}(\HT^{*}(A,B))\to\Hn^{*}(\HT^{*}(A,B))\) is an isomorphism.

  If \(\Char\kk=0\), then it is enough that \(K\) acts locally freely and that the isotropy groups in~\(A\setminus B\)
  have the same identity component~\(K'\).
\end{lemma}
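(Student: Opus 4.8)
The plan is to reduce everything to a statement about the support of the finitely generated graded $R$-module $M:=\HT^{*}(A,B)$ (finitely generated by Assumption~\ref{ass:4:finite} and Proposition~\ref{thm:4:serre-ss}), namely
\[
  \operatorname{Supp}_{R}M\cap V(\nn R)\subseteq\{\m\},
\]
equivalently that $M/\nn M$ is a finite-dimensional $\kk$-vector space. (If $A=B$ then $M=0$ and both local cohomologies vanish, so assume $A\neq B$.) Once this is known, the map in question is an isomorphism: the Čech complex on $t_{1},\dots,t_{r}$ computing $\Hm^{*}(M)$ factors as the Čech complex on $t_{p+1},\dots,t_{r}$ applied termwise to the Čech complex on $t_{1},\dots,t_{p}$, and filtering the resulting bicomplex by the latter degree gives a spectral sequence $E_{2}^{a}=H^{a}_{(t_{1},\dots,t_{p})R}\bigl(\Hn^{*}(M)\bigr)\Rightarrow\Hm^{*}(M)$, convergent because only finitely many columns occur, whose edge homomorphism is the canonical map $\Hm^{*}(M)\to\Hn^{*}(M)$. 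Each $\Hn^{j}(M)=H^{j}_{\nn R}(M)$ is $\nn R$-torsion and, being a subquotient of localizations of $M$, is supported in $\operatorname{Supp}_{R}M\cap V(\nn R)\subseteq\{\m\}$; hence it is $\m$-torsion, a~fortiori $(t_{1},\dots,t_{p})R$-torsion, so $E_{2}$ is concentrated in the column $a=0$ and the spectral sequence degenerates there, giving $\Hm^{*}(M)\cong\Hn^{*}(M)$ via the edge map. (Alternatively one may invoke that local cohomology of a fixed module depends on the defining ideal only through its radical modulo the annihilator.)

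To prove the support statement, let $\mathfrak p:=\ker\bigl(R\to H^{*}(BK';\kk)\bigr)$. First, $\operatorname{Supp}_{R}M\subseteq V(\mathfrak p)$: for $s\in\mathfrak p$ put $S=\{1,s,s^{2},\dots\}$. Every $x\in A\setminus B$ has $\ker\bigl(H^{*}(BT)\to H^{*}(BT_{x})\bigr)=\mathfrak p$ — literally, because $T_{x}=K'$; in the locally free case over a field of characteristic zero, because this kernel depends only on the identity component $(T_{x})_{0}=K'$ — so $s$ lies in it and $x\notin A^{S}$. Thus $A^{S}\subseteq B$, whence $A^{S}=B^{S}$ and $\HT^{*}(A^{S},B^{S})=0$; by the localization theorem (Proposition~\ref{thm:localization-thm-homology}), $S^{-1}M\cong S^{-1}\HT^{*}(A^{S},B^{S})=0$, i.e.\ $M$ is $s$-power torsion. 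As $s\in\mathfrak p$ was arbitrary, $\operatorname{Supp}_{R}M\subseteq\bigcap_{s\in\mathfrak p}V(s)=V(\mathfrak p)$.

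It remains to see that $V(\mathfrak p)\cap V(\nn R)=\{\m\}$. Since $K$ acts (locally) freely on $A\setminus B$, the group $K\cap K'$ is finite, so the projection $T\to L=T/K$ sends $K'$ with finite kernel onto a closed subgroup $\bar K'$ of the torus $L$. The homogeneous space $L/\bar K'$ is a compact manifold, so the Serre spectral sequence of $L/\bar K'\to B\bar K'\to BL$ (together with the rational acyclicity of the finite kernel when $\Char\kk=0$) shows that $H^{*}(BK';\kk)$ is a finitely generated module over the polynomial ring $R_{L}=H^{*}(BL)$; hence so is the subalgebra $R/\mathfrak p=\im\bigl(R\to H^{*}(BK';\kk)\bigr)$. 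Recalling from~\eqref{eq:def-RK-RL} that $\nn R$ is the ideal of $R$ generated by $t_{p+1},\dots,t_{r}$, i.e.\ by the canonical generators of $\nn=\m_{L}$, we obtain that $R/(\mathfrak p+\nn R)=(R/\mathfrak p)\otimes_{R_{L}}\kk$ is finite-dimensional over $\kk$. Together with $\operatorname{Supp}_{R}M\subseteq V(\mathfrak p)$ this shows $M/\nn M$ has finite $\kk$-dimension, completing the argument. The delicate point is this last step — passing from ``$K$ acts (locally) freely with isotropy $K'$'' to the algebraic statement $V(\mathfrak p)\cap V(\nn R)=\{\m\}$ — which requires some care about disconnected isotropy and torsion classes in positive characteristic; in characteristic $p$ it is the literal freeness of the $K$-action, giving $K\cap K'=\{e\}$, that makes $K'$ embed as a closed subgroup of $L$, and the argument hinges on the module-finiteness of $H^{*}(BK';\kk)$ over $H^{*}(BL;\kk)$.
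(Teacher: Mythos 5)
Your proof is correct, but it takes a different route from the paper's. The paper's argument is a change of generators for~\(\m\): since \(K'\to T\to L\) has finite kernel, the restriction \(H^{2}(BL)\to H^{2}(BK')\) surjects onto the image of \(H^{2}(BT)\), so one can choose \(t_{i}'\in\nn\) with \(u_{i}:=t_{i}-t_{i}'\in\mathfrak p:=\ker\bigl(R\to H^{*}(BK')\bigr)\); then \(u_{1},\dots,u_{p},t_{p+1},\dots,t_{r}\) also generate \(\m\), the localization theorem makes \(\HT^{*}(A,B)\) a \(u_{i}\)-power-torsion module, and the \v{C}ech complex built from these new generators visibly collapses to the one built from \(t_{p+1},\dots,t_{r}\) alone. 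You instead extract a purely commutative-algebraic criterion --- \(\operatorname{Supp}_{R}M\cap V(\nn R)\subseteq\{\m\}\), equivalently that \(\nn R\) and \(\m\) have the same radical modulo \(\ann M\) --- deduce the isomorphism formally from the bicomplex (composite-functor) spectral sequence factoring the \(\m\)-\v{C}ech complex through the \(\nn\)-\v{C}ech complex, and verify the criterion by combining the localization theorem with module-finiteness of \(H^{*}(BK')\) over \(R_{L}\), obtained from the Serre spectral sequence of the fibration over \(BL\) with compact fibre \(L/\bar K'\). In effect the paper's choice of generators proves the sharper equality \(\mathfrak p+\nn R=\m\), whereas your radical criterion is weaker but already suffices; the payoff of your version is a clean separation of the geometric input (the support bound on \(M\)) from the formal algebra (the comparison of \(\Hm\) and \(\Hn\)), at the price of a short spectral-sequence argument that the paper sidesteps entirely by its choice of generators.
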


\begin{proof}
  Since \(K\) acts (locally) freely, the composition \(K'\to T\to L\) is injective (or has finite kernel).
  This implies that the composition~\(H^{*}(BL)\to H^{*}(BT)\to H^{*}(BK')\) is surjective.
  Hence there are \(t_{1}'\),~\ldots,~\(t_{p}'\in\nn\)
  such that \(t_{i}\) and \(t_{i}'\) map to the same element in~\(H^{*}(BK')\)
  for~\(1\le i\le p\), and \(u_{i}=t_{i}-t_{i}'\) maps to~\(0\). By the localization theorem (Proposition~\ref{thm:localization-thm-homology}),
  this implies that the localization of~\(\HT^{*}(A,B)\) at~\(u_{i}\) vanishes.

  We observe that \(u_{1}\),~\ldots,~\(u_{p}\),~\(t_{p+1}\),~\ldots,~\(t_{r}\) also generate \(\m\).
  Since local cohomology can be computed from any set of generators, \cf~\cite[Thm.~A1.3]{Eisenbud:2005}, 
  we can assume that one has chosen these generators
  instead of the canonical generators~\(t_{1}\),~\ldots,~\(t_{r}\).
  Then the terms in the Čech complex involving at least one of the~\(u_{i}\)'s
  drop out, and we are left with the Čech complex computing~\(\Hn^{*}(\HT^{*}(A,B))\).
\end{proof}

\begin{proof}[Proof of Proposition~\ref{thm:locally-free-action}]
  We choose a splitting~\(T=K\times L\) with compatibly chosen representatives \(a_{i}\in C_{1}(T)\)
  as in the proof of Proposition~\ref{thm:action-trivial}.

  As mentioned already, the isomorphism
  \begin{equation}
    \label{eq:iso-HT-HL-quotient-K}
    \HT^{*}(A,B) = H_{L}^{*}(A/K,B/K)
  \end{equation}
  is classical
  (and requires that \(B\) is closed in~\(A\)).
  It is induced by the quasi-isomorphism of dg~\(R_{L}\)-modules
  \begin{align}
    \label{eq:quasi-iso-XK}
    C_{L}^{*}(A/K,B/K) = C^{*}(A/K,B/K)\otimes R_{L} &\to \CT^{*}(A,B) = C^{*}(A,B)\otimes R, \\
    \notag
    \gamma\otimes f &\mapsto \pi^{*}\gamma\otimes f,
  \end{align}
  where \(\pi\colon X\to X/K\) is the projection.

  It follows from the localization theorem that the localization of~\(H_{K}^{*}(A,B)\)
  at each generator~\(t_{i}\) of~\(R_{K}\) vanishes. Since \(H_{K}^{*}(A,B)\)
  is finitely generated over~\(R_{K}\) by Assumption~\ref{ass:4:finite},
  this implies that \(H_{K}^{*}(A,B)\) is killed by some power of each~\(t_{i}\)
  and therefore that it is finite-dimensional as \(\kk\)-vector space. By taking \(T=K\)
  in~\eqref{eq:iso-HT-HL-quotient-K}, we see that \(H^{*}(A/K,B/K)\)
  is also finite-dimensional.

  For the homological statement
  we start by proving that the canonical map
  \begin{equation}
    \Cm^{*,*}(\CT^{*}(A,B)) \to \Cn^{*,*}(\CT^{*}(A,B))
  \end{equation}
  is a quasi-isomorphism.
  We proceed by induction on the number~\(m\) of (connected) orbit types in~\(A\setminus B\).
  For~\(m=0\) there is nothing to show as \(A=B\) in this case.
  Otherwise fix an orbit type of maximal dimension in~\(A\setminus B\)
  and let \(A'\subset A\) be the union of~\(B\) and all other orbit types;
  \(A'\) is \(T\)-stable and closed in~\(A\).
  
  The short exact sequence
  \begin{equation}
    0 \to \CT^{*}(A,A') \to \CT^{*}(A,B) \to \CT^{*}(A',B) \to 0
  \end{equation}
  gives rise to the commutative diagram
  \begin{equation}
  \begin{tikzcd}[column sep=small]
    0 \arrow{r} & \Cm^{*,*}(\CT^{*}(A,A')) \arrow{d} \arrow{r} & \Cm^{*,*}(\CT^{*}(A,B)) \arrow{d} \arrow{r} & \Cm^{*,*}(\CT^{*}(A',B)) \arrow{d} \arrow{r} & 0 \\
    0 \arrow{r} & \Cn^{*,*}(\CT^{*}(A,A')) \arrow{r} & \Cn^{*,*}(\CT^{*}(A,B)) \arrow{r} & \Cn^{*,*}(\CT^{*}(A',B)) \arrow{r} & 0 \mathrlap{,}
  \end{tikzcd}
  \end{equation}
  whose horizontal sequences are again short exact.
  The right vertical arrow is a quasi-isomorphism by induction.
  To see that the left one is so as well, we consider the induced map
  between the \(E_{2}\)~pages of the first bicomplex spectral sequences~\eqref{eq:ss2}.
  In our case this is the map
  \begin{equation}
    \Hm^{*}(\HT^{*}(A,A')) \to \Hn^{*}(\HT^{*}(A,A')),
  \end{equation}
  and it is an isomorphism by Lemma~\ref{thm:Hn-Hm-iso} and our choice of~\(A'\).
  The map induced in cohomology by the left arrow above
  therefore is also an isomorphism.
  Hence the middle arrow is a quasi-isomorphism by the five-lemma, which proves the claim.

  The quasi-isomorphism~\eqref{eq:quasi-iso-XK}
  is in fact a homotopy equivalence over~\(R_{L}\)
  as both sides are free as \(R_{L}\)-modules.
  We therefore get isomorphisms of \(R_{L}\)-modules
  \begin{align}
    \hHT_{*}(A,B)^{\vee}[-r] &= \Hm^{*}(\CT^{*}(A,B)) = \Hn^{*}(\CT^{*}(A,B)) \\
    &= \Hn^{*}(C_{L}^{*}(A/K,B/K))
    = H^{L}_{*}(A/K,B/K)^{\vee}[-(r-p)],
  \end{align}
  which translates into the claimed isomorphism
  \begin{equation}
    \hHT_{*}(A,B) \to H^{L}_{*}(A/K,B/K)[-p] = H^{L}_{*-p}(A/K,B/K).   
  \end{equation}
  
  The last claim follows again from the absolute case and the five-lemma.
\end{proof}

All these results hold as well
for cohomology with compact supports and homology with closed supports
and closed \(T\)-pairs~\((A,B)\),
assuming that \(\Hc^{*}(A,B)\) is a finite-dimensional \(\kk\)-vector space, \cf~Assumption~\ref{ass:4:finite}.
The proofs are identical; the localization theorem for cohomology
with compact supports follows from the version for closed supports
since direct limits preserve isomorphisms.

\begin{proposition}
  \label{thm:relative-cohomology-complement}
  For any closed \(T\)-pair~\((A,B)\) in~\(X\) there are isomorphisms of \(R\)-modules
  \begin{align*}
    \HTc^{*}(A,B) &= \HTc^{*}(A\setminus B), \\
    \hHTc_{*}(A,B) &= \hHTc_{*}(A\setminus B).
  \end{align*}
\end{proposition}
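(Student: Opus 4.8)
The plan is to exploit the description of equivariant cohomology with compact supports as Alexander--Spanier cohomology of a one-point compactification, combined with excision, just as in the discussion following~\eqref{eq:definition-CTc}. First I would recall that, since $(A,B)$ is a closed $T$-pair in $X$, we may regard $(A,B)$ as a closed $T$-pair in $A$ itself without changing $\HTc^{*}(A,B)$; similarly $\HTc^{*}(A\setminus B)$ does not depend on whether the (locally closed) set $A\setminus B$ is viewed inside $X$ or inside $A$. So it suffices to work entirely inside $X=A$, i.e.\ to prove $\HTc^{*}(X,B)=\HTc^{*}(X\setminus B)$ for a closed $T$-stable subset $B\subset X$, and the analogous homological statement.

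The key step is to chase the defining direct limit~\eqref{eq:definition-CTc}. By definition
\[
  \CTc^{*}(X,B) = \dirlim \CT^{*}(U,V) = \Bigl(\dirlim C^{*}(U,V)\Bigr)\otimes R,
\]
the limit taken over $T$-stable open neighbourhood pairs $(U,V)$ of $(X,B)$ with $X\setminus V$ compact; but $U\supset X$ forces $U=X$, so the limit is over open $T$-stable $V\supset B$ with $X\setminus V$ compact, and the system is $C^{*}(X,V)$. On the other side, $\CTc^{*}(X\setminus B)=\bigl(\dirlim C^{*}(W)\bigr)\otimes R$, the limit over open $T$-stable $W\subset X\setminus B$ with $(X\setminus B)\setminus W$ compact. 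Matching $W=X\setminus\overline V$ (or more cleanly passing to a cofinal system where $\overline V$ is a $T$-stable closed neighbourhood of $B$ contained in the next open set of the system) gives, by excision of the closed set $\overline V$ from the open set $X$,
\[
  C^{*}(X,V) \;\xrightarrow{\ \sim\ }\; C^{*}(X\setminus \overline V,\,V\setminus\overline V)\ \text{and}\ C^{*}(X\setminus B)\supset C^{*}(W),
\]
so that the two direct systems are cofinal in one another up to quasi-isomorphism; tensoring with the flat (indeed free) $R$-module $R$ and passing to homology, which commutes with direct limits, yields the first isomorphism. For the homological statement I would \emph{not} dualize the cohomology isomorphism directly (the $R$-dual does not commute with $\dirlim$), but instead note that $\hCTc_{*}(X,B)=\Hom_{R}(\CTc^{*}(X,B),R)$ and $\hCTc_{*}(X\setminus B)=\Hom_{R}(\CTc^{*}(X\setminus B),R)$ are $R$-duals of \emph{cochain complexes that are already known to be $R$-homotopy equivalent}: by the remark recalled before the proof of Proposition~\ref{thm:action-trivial}, an excision quasi-isomorphism between dg $R$-modules that are free over $R$ is an $R$-homotopy equivalence, hence survives application of $\Hom_{R}(-,R)$, and I apply this after replacing each $\CTc^{*}(-)$ by an appropriate free dg $R$-module (or simply observe the relevant complexes $C^{*}(X,V)\otimes R$, $C^{*}(W)\otimes R$ are free over $R$).

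The main obstacle is the cofinality bookkeeping in the two direct limits: one has to produce, for every admissible $V$ on one side, an admissible $W$ on the other side and a compatible map, and check the maps are mutually cofinal so that the colimits agree. This is where the standing topological assumptions (local compactness, metrizability, so that closed neighbourhoods with compact complement can be interpolated between open ones) are used, exactly as in the tautness/excision argument already invoked after~\eqref{eq:definition-CTc}; I would phrase it by passing to the cofinal subsystem of pairs $(W,\overline V)$ with $W$ open, $\overline V$ closed, $B\subset\operatorname{int}\overline V$, $\overline V\cap W=\emptyset$, $X\setminus W$ compact, $W\cup\operatorname{int}\overline V=X$, on which excision is manifestly an isomorphism on the nose in Alexander--Spanier cohomology and a quasi-isomorphism on normalized singular cochains. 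Everything else is formal: $-\otimes R$ is exact, $\dirlim$ is exact and commutes with $H^{*}$, and $\Hom_{R}(-,R)$ preserves $R$-homotopy equivalences.
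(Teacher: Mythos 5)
Your proof of the first isomorphism matches the paper's approach in substance: both rest on excision at each stage of the direct system together with exactness of direct limits. One caveat: you write \(\CTc^{*}(X\setminus B)=\bigl(\dirlim C^{*}(W)\bigr)\otimes R\) where it should read \(\dirlim C^{*}(X\setminus B,W)\otimes R\) with \(W\) ranging over \(T\)-stable open sets with compact complement in \(X\setminus B\); without the relative group you are not computing compactly supported cochains at all. With that repair the cofinality argument you sketch (shrink a given \(V\supset B\) to \(V'\) with \(\overline{V'}\subset V\), excise, and use that \((X\setminus B)\setminus(V\setminus B)=X\setminus V\) is compact) is exactly the content of the paper's terse ``excision and exactness of direct limits.''

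For the second isomorphism your route is genuinely different from the paper's. The paper simply says the homological statement is ``a consequence of the universal coefficient theorem,'' that is, it invokes Proposition~\ref{thm:4:uct}: the naturality of the UCT spectral sequence with respect to chain maps lets the cohomological isomorphism pass through \(\Ext_{R}^{*}(-,R)\) and hence to the abutments \(\hHTc_{*}\). You instead observe that the excision map is a quasi-isomorphism of dg \(R\)-modules that are \emph{free} over \(R\) (correctly: \(\CTc^{*}(A,B)=\bigl(\dirlim C^{*}(U,V)\bigr)\otimes_{\kk}R\) is a \(\kk\)-vector space tensored with \(R\), hence free, so no replacement is needed — you could drop the hedge about substituting ``an appropriate free dg \(R\)-module''), apply the cited homotopy-equivalence lemma, and note that \(\Hom_{R}(-,R)\) preserves homotopy equivalences. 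Your warning that one cannot naively dualize the limit (\(\Hom_{R}(-,R)\) turns \(\dirlim\) into \(\invlim\)) is well taken and is exactly the pitfall your chain-level argument avoids. The two proofs are closely related — the UCT spectral sequence is itself built from the freeness of the singular Cartan model — but yours is more explicit and makes no appeal to spectral sequences, which is arguably the cleaner presentation; the paper's one-liner buys brevity at the cost of leaving the reader to unwind the naturality of the UCT with respect to chain maps that are not maps of pairs.
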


\begin{proof}
  The first identity follows from excision and the fact that a direct limit
  is an exact functor. The second identity then is a consequence
  of the universal coefficient theorem.
\end{proof}

\subsection{Homology manifolds}

Let \(X\) be a \emph{\(\kk\)-homology manifold}, say of dimension~\(n\).
By this we mean a connected space~\(X\) such that for any~\(x\in X\) one has
\begin{equation}
  \label{eq:def-homology-mf}
  H_{i}(X,X\setminus\{x\}) \cong
  \begin{cases}
    \kk & \text{if \(i=n\),} \\
    0 & \text{if \(i\ne n\).}
  \end{cases}
\end{equation}
If in addition \(\hHc_{n}(X) \cong \kk\), then
\(X\) is called \emph{orientable}.
Homology manifolds are an appropriate setting for Poincaré duality,
see Lemma~\ref{thm:PD-nonequiv} below.

\begin{assumption}
  \label{assumption:pm1}
  We assume that any homology manifold~\(X\) we consider
  is orientable or
  admits an orientable two-fold covering~\(\pi\colon \tilde X\to X\).
\end{assumption}

For non-orientable~\(X\), such a covering will be called
an \emph{orientation cover}. Note that \(\tilde X\) is necessarily connected.
For orientable~\(X\) we define the trivial two-fold covering
to be the orientation cover.
We will use orientation covers to define (co)homology with twisted coefficients
in Section~\ref{sec:twisted}.

\begin{remark}
  Any \(\Z\)-homology manifold admits an orientation cover, but
  it seems unclear whether this holds for arbitrary
  \(\kk\)-homology manifolds, see the discussion in~\cite[p.~331]{Bredon:1997}.
  On the other hand, if an orientation cover exists, then it is unique.
  For orientable~\(X\), this is true by definition.
  For non-orientable~\(X\) it can be seen as follows:
  
  Let \(\gamma\) be a loop at~\(x\in X\).
  By transporting local orientations along~\(\gamma\),
  we get an automorphism of~\(H_{n}(X,X\setminus\{x\})\), \cf~\cite[p.~39]{Bredon:1960},
  which is necessarily multiplication by some non-zero scalar.
  This induces a morphism~\(\phi\colon\pi_{1}(X)\to\kk^{\times}\).
  The connected orientable covers of~\(X\) are of the form~\(\tilde X/G\)
  where \(\tilde X\) is the universal cover
  and \(G\) a subgroup of the kernel of~\(\phi\).
  In particular,
  there is at most one orientation cover.
\end{remark}

The following observation seems to be well-known, but we could not find a suitable reference.

\begin{lemma}
  Assume \(\Char\kk=0\). Any connected, locally orientable orbifold
  is a \(\kk\)-homology manifold satisfying Assumption~\ref{assumption:pm1}.
\end{lemma}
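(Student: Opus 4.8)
The plan is to reduce the assertion to local statements at each point, using the orbifold charts, and then invoke excision together with the known cohomology of quotients of finite group actions on $\R^{n}$. First I would recall what ``locally orientable orbifold'' means: each point $x\in X$ has a neighbourhood of the form $U/\Gamma$, where $\Gamma$ is a finite group acting (effectively, orientation-preservingly for the locally orientable hypothesis) on an open set $U\subset\R^{n}$ containing a fixed point. The key local computation is that for such a $\Gamma$, the quotient $\R^{n}/\Gamma$ has the rational homology of a point, and more precisely $H_{i}(\R^{n}/\Gamma,(\R^{n}/\Gamma)\setminus\{*\};\kk)\cong H_{i}(\R^{n},\R^{n}\setminus\{0\};\kk)^{\Gamma}$ when $\Char\kk=0$, because the transfer argument shows rational (co)homology of a quotient $Y/\Gamma$ is the $\Gamma$-invariants of that of $Y$. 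Since $\Gamma$ acts orientation-preservingly on $\R^{n}$, it acts trivially on $H_{n}(\R^{n},\R^{n}\setminus\{0\};\kk)\cong\kk$, so the invariants are all of $\kk$ in degree $n$ and $0$ otherwise. This verifies condition \eqref{eq:def-homology-mf} at every point, so $X$ is a $\kk$-homology manifold.

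Next I would address Assumption~\ref{assumption:pm1}, i.e.\ that $X$ is orientable or admits an orientable two-fold cover. Following the remark preceding the lemma, transporting local orientations along loops defines a homomorphism $\phi\colon\pi_{1}(X)\to\kk^{\times}$; since each local group $\Gamma$ acts orientation-preservingly, the local pieces contribute trivially, and in fact (because $X$ is connected and the charts are connected) the image of $\phi$ is generated by the monodromy around loops, which takes values in $\{\pm1\}$ --- transporting a local generator of $H_{n}(X,X\setminus\{x\};\kk)$ around a loop and back to a local generator can only multiply by $\pm1$, as it must preserve a chosen integral-type generator up to sign in the underlying manifold chart. If $\phi$ is trivial, $X$ is orientable by definition. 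If not, $\ker\phi$ has index two in $\pi_{1}(X)$ and the associated connected double cover $\tilde X$ has trivial monodromy, hence is orientable; moreover $\tilde X$ is again a locally orientable orbifold (the covering is by local homeomorphisms on the orbifold charts), so $\hHc_{n}(\tilde X)\cong\kk$ by the orientability criterion for connected homology manifolds. Either way Assumption~\ref{assumption:pm1} holds.

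The main obstacle I expect is the local homology computation for a non-free finite quotient: one must be careful that $\R^{n}/\Gamma$ is genuinely a $\kk$-homology manifold even though $\Gamma$ need not act freely away from the origin, and that the identification $H_{*}(\R^{n}/\Gamma,\cdot)\cong H_{*}(\R^{n},\cdot)^{\Gamma}$ is valid; this is where $\Char\kk=0$ is essential, via the averaging/transfer isomorphism $H_{*}(Y/\Gamma;\kk)\cong H_{*}(Y;\kk)^{\Gamma}$ for finite $\Gamma$ (see e.g.\ \cite[Ch.~III]{Bredon:1972}). Once that is in hand, one also needs the relative version at a non-central point $y\in\R^{n}/\Gamma$, which follows by replacing $\Gamma$ by the (smaller) isotropy group of a preimage and restricting to an invariant ball around it; the orientation-preserving hypothesis again guarantees the isotropy acts trivially on top homology. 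After these local statements the global conclusion is routine, so the real content is packaging the transfer argument correctly and checking that the orientation double cover construction survives the orbifold singularities.
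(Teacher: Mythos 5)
Your proof is correct, and it takes a genuinely different route from the paper for the existence of the orientation cover. For the local-homology computation your transfer argument $H_{*}(\R^{n}/\Gamma;\kk)\cong H_{*}(\R^{n};\kk)^{\Gamma}$, together with the observation that $\Gamma\subset SO(n)$ acts trivially on $H_{n}(\R^{n},\R^{n}\setminus\{0\})$, is exactly what is needed and is what the paper is tersely gesturing at; the reduction to a smaller isotropy group at non-central points is the right extra step. Where you genuinely diverge is in Assumption~\ref{assumption:pm1}: the paper follows Satake and uses the smooth structure of the orbifold --- integration of compactly supported differential forms over an oriented atlas yields $\hHc_{n}(X)\cong\kk$, and if no compatibly oriented atlas exists it doubles the charts to build the cover. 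You instead stay purely topological: you observe that the monodromy homomorphism $\phi\colon\pi_{1}(X)\to\kk^{\times}$ factors through $\{\pm1\}$, because outside the singular set (which has codimension $\ge 2$, as $\Gamma\subset SO(n)$ acts effectively) $X$ is a topological manifold whose local homology carries an integral lattice, and then you take the double cover classified by $\ker\phi$. This is precisely the point where general $\kk$-homology manifolds fail (the monodromy could land anywhere in $\kk^{\times}$, cf.\ the remark preceding the lemma and \cite[p.~331]{Bredon:1997}), so isolating it is a genuine clarification. The one place you should be more explicit is the final claim that trivial monodromy forces $\hHc_{n}(\tilde X)\cong\kk$: that needs the sheaf-theoretic duality $\hHc_{n}(\tilde X)\cong H^{0}(\tilde X;\mathscr{O})$ from \cite[Thm.~V.9.2]{Bredon:1997}, where $\mathscr{O}$ is the orientation sheaf, applied with $\mathscr{O}$ now constant. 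With that reference supplied, your argument is complete and avoids differential forms altogether, which makes it arguably more elementary; the paper's de Rham approach has the advantage of being constructive (it produces the fundamental class by integration) and matches the development in Satake and Lee.
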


See~ \cite[Sec.~1.1]{AdemLeidaRuan:2007} or~\cite{Satake:1956}
for the definition of an orbifold.
By ``locally orientable'' we mean that
locally the orbifold~\(X\), say of dimension~\(n\), is the quotient of an open ball in~\(\R^{n}\)
by a finite subgroup of~\(SO(n)\).

\begin{proof}
  Condition~\eqref{eq:def-homology-mf} holds
  because one locally divides by a finite subgroup of~\(SO(n)\)
  and \(\Char\kk=0\).

  The existence of an orientation cover can be shown in the same way as for manifolds.
  Recall that in the smooth case one proceeds as follows, \cf~\cite[Ch.~15--17]{Lee:2012}:
  If \(X\) admits an oriented atlas, that is, if the charts of~\(X\) can be oriented in a way consistent with coordinate changes,
  then one can integrate differential forms with compact supports,
  and the integration map provides an isomorphism \(\hHc_{n}(X)\cong\kk\).
  Otherwise \(\hHc_{n}(X)=0\), and one can construct a connected double cover with oriented atlas
  by doubling all charts and gluing them according
  to whether coordinate changes preserve or reverse chart orientations.
  Hence \(X\) is orientable in our sense if and only if it
  admits an oriented atlas.

  For an orbifold~\(X\) one can also define differential forms with compact supports,
  and if \(X\) is locally orientable and has an atlas of compatibly oriented
  charts, then one can integrate these forms, \cf~\cite[\S 8]{Satake:1956}. If such an atlas does not
  exist, then one can again pass to an oriented two-fold cover.
  Now the proofs for manifolds go through without change. 
\end{proof}

\begin{lemma}
  Let \(X\) be a \(\kk\)-homology manifold with orientation cover~\(\pi\colon\tilde X\to X\).
  Any \(T\)-action on~\(X\) lifts to a \(T\)-action on~\(\tilde X\).
\end{lemma}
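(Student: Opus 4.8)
The plan is to lift the $T$-action along the orientation cover $\pi\colon\tilde X\to X$ using the standard covering-space obstruction theory, combined with the uniqueness of the orientation cover established above. First I would recall that, since $T=(S^1)^r$ is connected, a continuous action $T\times X\to X$ is in particular a homotopy through self-maps of $X$ starting at the identity; more precisely, fixing a basepoint $x_0\in X$ and a point $\tilde x_0\in\pi^{-1}(x_0)$, the action map $\mu\colon T\times X\to X$ composed with the inclusion $X\hookrightarrow T\times X$, $x\mapsto(e,x)$, is just the identity of $X$, which lifts to $\tilde X$. By the lifting criterion for the covering $\tilde X\times\tilde{?}$—more cleanly, by pulling back the cover $\pi$ along $\mu$ to get a double cover $\mu^*\tilde X\to T\times X$—it suffices to show that this pulled-back double cover is (isomorphic to) $\operatorname{pr}_X^*\tilde X$, i.e.\ that $\mu^*\tilde X$ is trivial in the $T$-direction. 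Equivalently, the classifying homomorphism $\phi\colon\pi_1(X)\to\kk^\times$ (actually its reduction to $\Z/2$, the monodromy of the orientation double cover) satisfies $\phi\circ\mu_* = \phi\circ(\operatorname{pr}_X)_*$ on $\pi_1(T\times X)=\pi_1(T)\times\pi_1(X)$, which holds because any loop in $T$ based at $e$ acts on $X$ through maps homotopic to the identity and hence cannot change local orientation; formally, $\mu_*$ restricted to $\pi_1(T)$ lands in the kernel of $\phi$ since it factors through the contractible—no, through the path-connected—identity component considerations. The cleanest formulation: a loop $\sigma$ in $T$ gives a homotopy from $\id_X$ to $\id_X$, so for any loop $\gamma$ in $X$ the loops $\gamma$ and $\mu(\sigma,\gamma)$ are freely homotopic in $X$, hence carry the same orientation monodromy, hence $\phi(\mu(\sigma,\gamma))=\phi(\gamma)$ and in particular $\phi(\mu(\sigma,x_0))=1$.

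Having shown the monodromy condition, the lifting criterion produces a unique continuous map $\tilde\mu\colon T\times\tilde X\to\tilde X$ with $\pi\circ\tilde\mu=\mu\circ(\id_T\times\pi)$ and $\tilde\mu(e,\tilde x_0)=\tilde x_0$; here one uses that $T\times\tilde X$ is connected, locally path-connected (all our spaces are locally contractible), and that the composite $T\times\tilde X\to T\times X\to X$ satisfies the $\pi_1$-image condition just verified. It remains to check that $\tilde\mu$ is a group action, i.e.\ $\tilde\mu(e,-)=\id_{\tilde X}$ and $\tilde\mu(g,\tilde\mu(h,-))=\tilde\mu(gh,-)$. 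Both are identities between continuous maps $\tilde X\to\tilde X$ (resp.\ $T\times T\times\tilde X\to\tilde X$) that, after composing with $\pi$, agree (because $\mu$ is an action) and that agree on the basepoint $\tilde x_0$ (resp.\ $(e,e,\tilde x_0)$); since $\tilde X$ (resp.\ $T\times T\times\tilde X$) is connected and $\pi$ is a covering, the unique-lifting property of coverings forces the two maps to coincide. Continuity of $\tilde\mu$ as a map on the product is automatic from the lifting construction, so $\tilde X$ becomes a $T$-space and $\pi$ becomes $T$-equivariant.

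The main obstacle—and really the only nontrivial point—is the verification that the pulled-back cover $\mu^*\tilde X\to T\times X$ is trivial in the $T$-factor, i.e.\ the monodromy computation $\phi\circ\mu_*|_{\pi_1(T)}=1$. This is where connectedness of $T$ is essential: for a disconnected group the statement can fail, since a component not containing $e$ could act by an orientation-reversing homeomorphism and there would be no canonical lift. Once this is in hand, everything else is the formal machinery of covering spaces (lifting criterion plus uniqueness of lifts on connected domains), and no further geometric input about homology manifolds or orbifolds is needed beyond Assumption~\ref{assumption:pm1} guaranteeing that $\tilde X$ exists and is connected. One should also note that, because the orientation cover is unique (as recorded in the remark above), the lifted action is canonical and does not depend on the choices of basepoints $x_0$ and $\tilde x_0$.
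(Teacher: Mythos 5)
Your overall strategy — reduce the lifting problem to the vanishing of the monodromy $\phi\circ\mu_*$ on $\pi_1(T)$ and then invoke the covering-space lifting criterion plus uniqueness of lifts — is a perfectly good route, and it is genuinely different from the paper's argument, which instead cites Bredon to lift the action to a double cover $\tilde T$ of $T$ and then rules out $\tilde T$ being connected. However, the one step you yourself flag as ``the only nontrivial point'' contains a false claim: the loops $\gamma$ and $\mu(\sigma,\gamma)$ are \emph{not} freely homotopic in general. Taking $\gamma$ to be the constant loop at $x_0$, your assertion would say the orbit loop $\beta(s)=\sigma(s)\cdot x_0$ is nullhomotopic, which already fails for the standard $S^1$-action on the open M\"obius band $M=S^1\times_{\Z_2}(-1,1)$: the orbit of a core point represents $2\in\Z=\pi_1(M)\ne 0$. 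What the homotopy $H_s=\sigma(s)\cdot{-}$ actually produces, via $F(s,t)=H_s(\gamma(t))$, is a \emph{free} homotopy from $\gamma$ to $\gamma$ whose basepoint traces $\beta$; on $\pi_1$ this gives the conjugacy $[\gamma]\sim[\beta][\gamma][\beta]^{-1}$, which says nothing at all about $\phi(\beta)$. So the monodromy computation is not established, and the pullback cover has not been shown to be trivial in the $T$-direction.

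The correct input, which both your route and the paper's ultimately rely on, is a continuity statement about orientations rather than a homotopy of loops. Since each $H_s$ is a \emph{homeomorphism} of $X$, it acts on local orientations, and the assignment $\hat H_s(x,o_x)=\bigl(H_s(x),(H_s)_*o_x\bigr)$ defines a continuous family of lifts $\hat H_s\colon\tilde X\to\tilde X$ with $\hat H_0=\hat H_1=\id_{\tilde X}$ (because $H_0=H_1=\id_X$ and $\id_*=\id$). The path $s\mapsto\hat H_s(\tilde x_0)$ is then a lift of $\beta$ that closes up, so $\phi(\beta)=1$; with this in hand the rest of your argument (lifting criterion, uniqueness of lifts on connected domains, verification of the action axioms) goes through as written. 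The paper avoids this hands-on orientation transport by quoting Bredon's lifting theorem: the $T$-action lifts to a $\tilde T$-action on $\tilde X$ for some two-fold cover $\tilde T\to T$, the nontrivial deck transformation $\tau$ is shown to be orientation-reversing by comparing $\hHc_n(X)$ with $\hHc_n(\tilde X)^\tau$, and then a connected $\tilde T$ would force $\tau$ to be orientation-preserving — a contradiction — so $\tilde T$ splits and $T$ itself lifts. That last implication hides the same continuity-of-orientation point that your argument needs to make explicit; neither proof is purely formal covering-space bookkeeping.
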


See~\cite[Cor.~I.9.4]{Bredon:1972} for an analogous result
in the context of topological manifolds.

\begin{proof}
  The case of orientable~\(X\) is trivial. If \(X\) is non-orientable,
  then by~\cite[Sec.~I.9]{Bredon:1972} the \(T\)-action on~\(X\) lifts
  to a \(\tilde T\)-action on~\(\tilde X\),
  where \(\tilde T\) is a two-fold covering of~\(T\)
  and \(\ker(\tilde T \to T)\cong\Z_{2}\) acts by deck transformations.
  If the non-trivial deck transformation~\(\tau\)
  were orientation-preserving, then \(X\) would have to be orientable
  because \(\hHc_{n}(X)=\hHc_{n}(\tilde X)^{\tau}\cong\kk\),
  where \(n=\dim X\).
  So \(\tau\) does not preserve orientations, which implies
  that \(\tilde T\) cannot be connected. Hence its identity component is \(T\),
  and the action lifts.
\end{proof}

\subsection{Twisted coefficients}
\label{sec:twisted}

The aim of this section is to introduce equivariant (co)homology with twisted coefficients~\(\kktilde\).
To distinguish it from the (co)homology we have considered so far,
the latter will be called (co)homology with constant coefficients~\(\kk\) from now on.
Twisted coefficients are only interesting if the characteristic of the ground field~\(\kk\) differs from~\(2\), which we assume
in this section.
For~\(\Char\kk=2\) (co)homology with twisted coefficients is defined to be the same as (co)homology with constant coefficients.

We focus on cohomology with closed supports and homology with compact supports. 
All results are equally valid for the other pair of supports;
we will indicate when proofs for that case need additional arguments.

\smallskip

Let \(X\) be a \(\kk\)-homology manifold (which, by our definition, is connected)
with orientation cover~\(\pi\colon\tilde X\to X\)
and non-trivial deck transformation~\(\tau\).
For a pair~\((A,B)\) in~\(X\),
we write \((\tilde A,\tilde B)=(\pi^{-1}(A),\pi^{-1}(B))\).
Moreover, we denote the involution of~\(C^{*}(\tilde A,\tilde B)\)
induced
by~\(\tau\)
by the same letter.
Since \(2 \in \kk \) is invertible, we get a decomposition
\begin{equation}
  \label{eq:splitting-cochains-1}
  C^{*}(\tilde A,\tilde B) = C^{*}(\tilde A,\tilde B)_{+}\oplus C^{*}(\tilde A,\tilde B)_{-}
\end{equation}
into the eigenspaces of~\(\tau\) for the eigenvalues~\(\pm 1\).
Note that \(\pi^{*}\) is an isomorphism of~\(C^{*}(A,B)\) onto~\(C^{*}(\tilde A,\tilde B)_{+}\).
We define \(C^{*}(A,B;\kktilde)\), the cochains on~\((A,B)\) with twisted coefficients,
to be the eigenspace for the eigenvalue~\(-1\) of~\(\tau\).
Hence the splitting~\eqref{eq:splitting-cochains-1} becomes
\begin{equation}
  \label{eq:splitting-cochains}
  C^{*}(\tilde A,\tilde B) = C^{*}(A,B)\oplus C^{*}(A,B;\kktilde)
\end{equation}
and induces an analogous decomposition in cohomology,
\begin{equation}
  \label{eq:splitting-cohomology}
  H^{*}(\tilde A,\tilde B) = H^{*}(A,B)\oplus H^{*}(A,B;\kktilde).
\end{equation}


We now assume that \(X\) is equipped with a \(T\)-action and that the pair~\((A,B)\) is \(T\)-stable. Since
the decomposition~\eqref{eq:splitting-cochains} is \(C_{*}(T)\)-stable,
we can 
define \emph{equivariant (co)homology with twisted coefficients}
in a way analogous to Section~\ref{sec:4:singular-Cartan-model}:
\begin{align}
  \CT^{*}(A,B;\kktilde) &= \CT^{*}(\tilde A,\tilde B)_{-} = C^{*}(A,B;\kktilde)\otimes R \\
  \intertext{with the same differential as in~\eqref{eq:4:definition-d-CT},}
  \label{eq:def-HT-kktilde}
  \HT^{*}(A,B;\kktilde) &= H^{*}(\CT^{*}(A,B;\kktilde)), \\
  \hCT_{*}(A,B;\kktilde) &= \Hom_{R}(\CT^{*}(A,B;\kktilde),R), \\
  \label{eq:def-hHT-kktilde}
  \hHT_{*}(A,B;\kktilde) &= H_{*}(\hCT_{*}(A,B;\kktilde)).
\end{align}
Note that 
one has decompositions
\begin{align}
  \label{eq:splitting-HT}
  \HT^{*}(\tilde A,\tilde B) &= \HT^{*}(A,B)\oplus \HT^{*}(A,B;\kktilde), \\
  \label{eq:splitting-hHT}
  \hHT_{*}(\tilde A,\tilde B) &= \hHT_{*}(A,B)\oplus \hHT_{*}(A,B;\kktilde).
\end{align}
(For \(\HTc^{*}(-)\) and \(\hHTc_{*}(-)\)
they follow from the fact that sets of the form~\(\tilde V\)
such that the complement of~\(V\subset X\) is compact are cofinal among all
subsets of~\(\tilde X\) with compact complement.)
Of course, one already has decompositions on the (co)chain level.

Assumption~\ref{ass:4:finite} is extended as follows:

\begin{assumption}
  \label{ass:4:finite-homologymf}
  For any \(T\)-pair~\((A,B)\) in a \(\kk\)-homology manifold~\(X\)
  and any (co)homology theory we are going to consider,
  we assume that the non-equivariant cohomology of the cover~\((\tilde A,\tilde B)\) is finite-dimensional over~\(\kk\).
  In light of~\eqref{eq:splitting-cohomology}, this is equivalent to both the cohomology
  with constant coefficients and that with twisted coefficients being finite-dimensional.
  By Proposition~\ref{thm:4:serre-ss},
  this in turn implies that the equivariant (co)homology of~\((A,B)\) with constant or twisted coefficients
  is finitely generated over~\(R\).
\end{assumption}

Our definition of~\(H^{*}(A,B;\kktilde)\) does not require \(A\) or \(B\) to be \(\kk\)-homology manifolds themselves.
But if \(A\) is connected and open in~\(X\), then it is a \(\kk\)-homology manifold as well, and the restriction of~\(\pi\) to~\(A\) is
the orientation cover of~\(A\). Hence the definition of twisted coefficients
is independent of the ambient space in this case.

\begin{remark}
  \label{rem:twisted-properties}
  All results from Section~\ref{sec:properties}
  (Serre spectral sequences, universal coefficient theorems and localization theorems)
  carry over to twisted coefficients.
  To see this, one can either redo the proofs with twisted (co)homology,
  or one can reduce the new results to the untwisted case
  by using the splittings~\eqref{eq:splitting-HT} and~\eqref{eq:splitting-hHT}.
\end{remark}

\begin{remark}
  An alternative way to define cohomology with twisted coefficients
  is to use local coefficient systems. This could be done as well in the equivariant setting,
  and one could even dispense with Assumption~\ref{assumption:pm1}.
  The drawback of this approach would be that one cannot reduce statements
  to the case of constant coefficients anymore.
  In particular, one would need to prove a generalization
  of Proposition~\ref{thm:locally-free-action}
  (essentially, of the Vietoris--Begle mapping theorem)
  to local coefficients, which is required to prove Proposition~\ref{thm:stratum-cm-c}.
\end{remark}

\begin{remark}
  \label{rem:4:loc-contractible}
  We are mainly interested in applying our results to the fixed point sets~\(X^{K}\)
  of subtori~\(K\subset T\), and
  because we want to use the Localization Theorem for singular cohomology
  (Proposition~\ref{thm:localization-thm-homology}),
  we put local contractability
  into the standing assumptions in Section~\ref{sec:4:assumptions}
  
  Now it is a small step from~\eqref{eq:definition-CTc}
  to using Alexander--Spanier cohomology
  for all closed invariant pairs~\((A,B)\),
  \cf~\citeorbitsone{Rem.~\ref*{rem:loc-contractible}}.
  Thus it is not, in fact, necessary to assume
  closed subsets to be locally contractible
  since the Localization Theorem for Alexander--Spanier cohomology
  does not need this assumption. We would, however,
  continue to assume that the ambient space~\(X\)
  satisfies the standing assumptions; and we do not know
  of any torus action on such a space where the fixed point sets
  are not locally contractible -- but nor do we know a proof
  that they always are.
\end{remark}

\section{Equivariant duality results}
\label{sec:duality-results}

\subsection{Poincaré duality}

Let \(\kk\) be a field.
We start with the statement of non-equivariant Poincaré duality
for orientable homology manifolds in our setting because it is not easy
to locate it in the literature in the desired generality.

\begin{lemma}
  \label{thm:PD-nonequiv}
  Let \(X\) be an orientable \(\kk\)-homology manifold of dimension~\(n\).
  For any non-zero~\(o\in\hHc_{n}(X)\),
  the cap product map
  \begin{equation*}
    \Hc^{*}(X) \to H_{n-*}(X),
    \quad
    \alpha\mapsto \alpha\cap o
  \end{equation*}
  is an isomorphism. 
\end{lemma}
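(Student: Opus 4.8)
The plan is to establish Poincaré duality for orientable $\kk$-homology manifolds by the standard sheaf-theoretic or direct-limit argument, adapted to the present (co)chain-level framework. First I would reduce to the case where $X$ is covered by ``Euclidean-like'' open sets: by local contractibility and the homology manifold condition~\eqref{eq:def-homology-mf}, every point has arbitrarily small open neighbourhoods $U$ that are acyclic and satisfy $H_i(U, U \setminus \{x\}) \cong \kk$ for $i = n$ and $0$ otherwise, so that the local duality statement $\Hc^*(U) \to H_{n-*}(U)$ holds for such $U$ (both sides being $\kk$ concentrated in the appropriate degree, identified by the cap product with the local orientation class). The orientability hypothesis $\hHc_n(X) \cong \kk$, together with the homology manifold condition, guarantees that a global class $o$ restricts to a generator of each $H_n(X, X \setminus \{x\})$, i.e.\ determines a coherent choice of local orientations.

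Next I would run a Mayer--Vietoris induction. For $U, V$ open in $X$ with the duality isomorphism known for $U$, $V$, and $U \cap V$, one gets it for $U \cup V$ by comparing the Mayer--Vietoris sequence for $\Hc^*(-)$ (for compact supports, this is covariant for open inclusions, so the sequence reads $\cdots \to \Hc^*(U \cap V) \to \Hc^*(U) \oplus \Hc^*(V) \to \Hc^*(U \cup V) \to \cdots$) with the ordinary Mayer--Vietoris sequence for $H_*(-)$, the cap product with $o$ furnishing a commutative ladder, and then invoking the five lemma. One has to check the ladder commutes up to sign, which is a naturality property of the cap product with respect to the connecting homomorphisms; this is routine but is the sort of bookkeeping one should mention. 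Using this, one first proves duality for all open sets that are finite unions of Euclidean-like opens, then for arbitrary open sets that are increasing countable unions of such (using that $\Hc^*$ and $H_*$ both commute with the relevant direct limits over the directed system of ``compactly supported'' or ordinary expanding families), and finally for $X$ itself, invoking second countability and finite covering dimension to guarantee such an exhaustion exists.

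The main obstacle, I expect, is the passage to non-finite unions: one must be careful about which direct limits are being taken and in which variance. Compactly supported cohomology $\Hc^*$ is a \emph{covariant} functor for open embeddings and commutes with direct limits over expanding open families, while ordinary homology $H_*$ is covariant for inclusions and commutes with direct limits over expanding families as well, so both behave compatibly under an exhaustion $X = \bigcup_k U_k$; but checking that the cap-product maps are compatible with these limit structures, and that the colimit of isomorphisms is an isomorphism, requires care. A secondary subtlety is that our ``homology manifold'' is only assumed to be a $\kk$-homology manifold, so one cannot appeal to a smooth or PL structure; everything must be phrased in terms of the defining local condition~\eqref{eq:def-homology-mf} and local contractibility. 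As an alternative to the Mayer--Vietoris bootstrap, one could instead cite a sheaf-theoretic Poincaré duality theorem (e.g.\ from Bredon's \emph{Sheaf Theory}) for $\kk$-cohomology manifolds and then identify the sheaf-theoretic groups with the singular ones using local contractibility and paracompactness, as is already done elsewhere in the paper; the authors may well prefer this route, in which case the ``proof'' is mostly a matter of quoting the right statement and matching conventions, with the cap-product description of the isomorphism being the one new thing to verify.
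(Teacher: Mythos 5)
Your primary route is a Mayer--Vietoris bootstrap, but the paper actually takes exactly the sheaf-theoretic route that you sketch only at the end as an ``alternative'': it uses the standing assumptions to identify singular (co)homology with sheaf (co)homology on~$X$, verifies that $X$ is an $n$-dimensional $\kk$-homology manifold in Bredon's sense with orientation-sheaf stalks given by~\eqref{eq:def-homology-mf}, checks that the two notions of orientability agree, and then quotes Bredon's sheaf-theoretic duality together with the identification of the sheaf-theoretic and singular cap products. The ``matching conventions'' you describe is essentially the whole content of the paper's proof.

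The Mayer--Vietoris bootstrap, as you have set it up, has a genuine gap in the base case. You assert that for a small acyclic open~$U$ around~$x$, both $\Hc^{*}(U)$ and $H_{n-*}(U)$ are~$\kk$ concentrated in a single degree, so the cap product with the local orientation class is automatically an isomorphism there. Acyclicity does give $H_{*}(U)=\kk$ in degree~$0$. But acyclicity together with $H_{i}(U,U\setminus\{x\})=\kk$ for $i=n$ does \emph{not} compute $\Hc^{*}(U)$; the statement $\Hc^{*}(U)\cong\kk$ in degree~$n$ is itself a form of local duality, not a hypothesis. For topological manifolds the bootstrap starts because one may take $U$ an open subset of~$\R^{n}$ and compute $\Hc^{*}(\R^{n})$ directly via the exhaustion by Euclidean balls---this uses the explicit local model, not merely the local-homology condition. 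A general $\kk$-homology manifold has no such canonical local chart, and to identify $\Hc^{*}(U)$ one would need, for instance, a cofinal family of compacta $K\ni x$ inside~$U$ for which the inclusion-induced map $H^{*}(U,U\setminus K)\to H^{*}(U,U\setminus\{x\})$ is an isomorphism; this is precisely the sort of local rigidity the sheaf-theoretic framework encodes and which does not follow from acyclicity alone. So your bootstrap cannot get off the ground, and the sheaf-theoretic route is essentially forced in this generality rather than an optional alternative.
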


Such an~\(o\) is called an orientation of~\(X\);
it generalizes the notion of a fundamental class of a manifold.

\begin{proof}
  Recall that \(X\) is assumed to be a locally compact and locally contractible second-countable Hausdorff space. 
  Sheaf (co)homology and singular (co)homology (with closed or compact supports)
  are therefore naturally isomorphic on~\(X\),
  \cf~\cite[Thm.~III.1.1, Cor.~V.12.17, Cor.~V.12.21]{Bredon:1997}.

  As mentioned in the proof of~\cite[Cor.~V.16.9]{Bredon:1997},
  the stalks of the orientation sheaf on~\(X\) are given by~\eqref{eq:def-homology-mf}.
  Hence \(X\) is an \(n\)-dimensional homology manifold over~\(\kk\)
  in the sense of~\cite[Def.~V.9.1]{Bredon:1997}.
  Moreover, by~\cite[Thm.~V.16.16\,(f)]{Bredon:1997}
  our definition of orientability coincides with the one in~\cite[Def.~V.9.1]{Bredon:1997}.
  By~\cite[Thm.~V.9.2, Cor.~V.10.2]{Bredon:1997},
  the sheaf-theoretically defined cap product with~\(o\)
  is an isomorphism.
  This map coincides with the
  cap product in the singular theory given above,
  \cf~\cite[Ex.~V.22]{Bredon:1997}.
\end{proof}

Now let \(X\) be a \(T\)-space and
a not necessarily orientable \(\kk\)-homology manifold of dimension~\(n\) with orientation cover~\(\tilde X\).
The cup product in~\(\tilde X\) is \(\tau\)-equiv\-ari\-ant,
so that we obtain a pairing
\begin{equation}
  \label{eq:cup-product-twisted-c}
  \CTc^{*}(X;\kktilde) \otimes \CT^{*}(X)\to \CTc^{*}(X;\kktilde),
\end{equation}
hence a cap product
\begin{equation}
  \label{eq:cap-product-twisted-c}
  \HTc^{*}(X;\kktilde) \otimes \hHTc_{*}(X;\kktilde) \to \hHT_{*}(X),
  \quad
  \alpha\otimes b\mapsto \alpha\cap b.
\end{equation}
Extending the above definition,
an \emph{orientation} of~\(X\)
is a non-zero element~\(o\in \hHc_{n}(X;\kktilde)\subset\hHc_{n}(\tilde X)\).
An \emph{equivariant orientation} is an element~\(o_{T}\in\hHTc_{n}(X;\kktilde)\)
that restricts to an orientation under the restriction map~\(\hHTc_{*}(X;\kktilde)\to\hHc_{*}(X;\kktilde)\).

\begin{proposition}
  \label{thm:PD-noncompact}
  Let \(X\) be an \(n\)-dimensional \(\kk \)-homology manifold. 
  Any orientation~\(o\) of~\(X\) lifts uniquely to an equivariant orientation~\(o_{T}\).
  Moreover, taking the cap product with~\(o_{T}\) gives an isomorphism of \(R\)-modules
  \begin{equation*}
    \HTc^{*}(X;\kktilde) \stackrel{\cap o_{T}}\longrightarrow \hHT_{n-*}(X)
  \end{equation*}
  and, dually, an isomorphism~
  \begin{equation*}
    \hHTc_{*}(X;\kktilde)\longrightarrow\HT^{n-*}(X).
  \end{equation*}
\end{proposition}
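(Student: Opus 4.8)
The strategy is to lift the non-equivariant Poincaré duality isomorphism of Lemma~\ref{thm:PD-nonequiv} through the Serre spectral sequences of Proposition~\ref{thm:4:serre-ss}. First I would set up the relevant chain-level map. The pairing~\eqref{eq:cup-product-twisted-c} is a map of dg~$R$-modules $\CTc^{*}(X;\kktilde)\otimes_{R}\CT^{*}(X)\to\CTc^{*}(X;\kktilde)$; choosing a cocycle representative for the orientation $o$ (or, more precisely, for the equivariant lift, whose existence and uniqueness is the first thing to prove) yields an $R$-linear cap product map on the chain level
\begin{equation*}
  \CTc^{*}(X;\kktilde)\longrightarrow \hCT_{n-*}(X)=\Hom_{R}(\CT^{*}(X),R)[-n].
\end{equation*}
For the existence and uniqueness of~$o_{T}$, I would argue on the $E_{2}$~page of the homological Serre spectral sequence for $\hHTc_{*}(X;\kktilde)$, whose $E_{2}$ is $H_{*}(X;\kktilde)\otimes R$. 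The restriction map $\hHTc_{n}(X;\kktilde)\to\hHc_{n}(X;\kktilde)$ corresponds on $E_{2}$ to projecting onto the degree-zero part of $R$; since $\hHc_{n}(X;\kktilde)=\hHc_{n}(\tilde X)^{\tau=-1}$ is one-dimensional (this is what orientability of $\tilde X$ plus the non-orientability of $X$ gives us, as in the lemma on lifting $T$-actions), and since everything in degrees above~$n$ vanishes for dimension reasons, there are no possible differentials into or out of the relevant spot, so the lift exists and is unique. This is the part I expect to require the most care: one must check that the edge map really is the restriction map, and that the finiteness Assumption~\ref{ass:4:finite-homologymf} is in force so the spectral sequence behaves.

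Granting $o_{T}$, the main step is to show the cap product map is a quasi-isomorphism. I would filter both sides by the Serre filtration (powers of~$\m$, equivalently the filtration by polynomial degree in~$R$) and compare the induced maps on associated graded / $E_{1}=E_{2}$ pages. On the source, $E_{2}=\Hc^{*}(X;\kktilde)\otimes R$; on the target, $E_{2}=H_{n-*}(X)\otimes R$; and the induced map on $E_{2}$ is, $R$-linearly, the non-equivariant cap product $\Hc^{*}(X;\kktilde)\to H_{n-*}(X)$ tensored with~$R$. Here is where I invoke the non-equivariant Poincaré duality for the orientable cover: $\Hc^{*}(\tilde X)\to H_{n-*}(\tilde X)$ is an isomorphism by Lemma~\ref{thm:PD-nonequiv}, it is $\tau$-equivariant (the cup and cap products on $\tilde X$ are), and the twisted-coefficient groups are by definition the $\tau=-1$ eigenspaces, so passing to eigenspaces shows $\Hc^{*}(X;\kktilde)\to H_{n-*}(X)$ is an isomorphism as well — note the target is the $\tau=+1$ eigenspace because cap product trades the $-1$ on $\kktilde$ against the $-1$ coming from~$o$. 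Therefore the map on $E_{2}$ is an isomorphism, hence so is the map of spectral sequences, hence the cap product map induces an isomorphism $\HTc^{*}(X;\kktilde)\cong\hHT_{n-*}(X)$ of $R$-modules, after checking convergence (which holds by the finiteness assumption, since then all modules are finitely generated over~$R$ and the filtrations are finite in each total degree).

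Finally, for the dual statement I would apply $\Hom_{R}(-,R)$. Since $\CTc^{*}(X;\kktilde)$ and $\CT^{*}(X)$ are, after replacing them by $R$-homotopy equivalent models, finitely generated free dg~$R$-modules (using the fact quoted before the proof of Proposition~\ref{thm:action-trivial}), the cap product chain map is a homotopy equivalence of such, and $\Hom_{R}(-,R)$ sends it to another homotopy equivalence
\begin{equation*}
  \Hom_{R}(\hCT_{n-*}(X),R)\longrightarrow\Hom_{R}(\CTc^{*}(X;\kktilde),R)=\hCTc_{*}(X;\kktilde).
\end{equation*}
Taking cohomology of the left-hand side and using that $\CT^{*}(X)\simeq \Hom_{R}(\hCT_{*}(X),R)$ for a finite free model identifies $H^{*}(\Hom_{R}(\hCT_{n-*}(X),R))$ with $\HT^{n-*}(X)$; this gives the isomorphism $\hHTc_{*}(X;\kktilde)\cong\HT^{n-*}(X)$. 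One should note in passing that, as remarked in the introduction, this dual isomorphism is \emph{not} simply the $R$-dual of the first, and indeed it need not be the inverse of a cap product; but as an abstract isomorphism of $R$-modules it follows formally from the homotopy-equivalence upgrade of the cap product map.
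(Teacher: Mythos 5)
Your proposal is correct and follows essentially the same route as the paper: both obtain $o_T$ by looking at the top-degree edge of the homological Serre spectral sequence, both prove the cap-product isomorphism by comparing the Serre spectral sequences (the paper handles the orientable case first and then passes to the $\tau$-eigenspaces of the cover, whereas you work directly in eigenspaces, but these are the same computation), and both derive the dual isomorphism from the first by $R$-dualizing a finite free model — which is precisely the content of the universal coefficient theorem the paper cites.
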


\begin{proof}
  The canonical projection~\(\hHc_{*}(X;\kktilde)\otimes R\to\hHc_{*}(X;\kktilde)\)
  is the edge homomorphism of the \(E_{2}\)~page of the Serre spectral sequence
  for~\(\hHTc_{*}(X;\kktilde)\) 
  (Proposition~\ref{thm:4:serre-ss} and Remark~\ref{rem:twisted-properties}).
  Since \(\hHc_{*}(X)\otimes R\) lives
  in homological degrees at most~\(n\), there are no higher differentials,
  and the map~\(\hHTc_{n}(X;\kktilde)\to\hHc_{n}(X;\kktilde)\)
  is an isomorphism. Hence any orientation lifts uniquely
  to an equivariant orientation.

  To prove the first isomorphism, let us assume for the moment that \(X\) is orientable.
  Applying the Serre spectral sequence to the map
  \begin{equation}
    \label{eq:eq-duality-proof}
    \HTc^{*}(X) \to \hHT_{*}(X),
    \quad
    \alpha \mapsto \alpha\cap o_{T},
  \end{equation}
  we find on the \(E_{2}\)~level the \(R\)-linear extension
  \begin{equation}
    \Hc^{*}(X)\otimes R \to H_{*}(X)\otimes R,
    \quad
    \alpha\otimes f \mapsto \alpha\cap o\otimes f
  \end{equation}
  of the non-equivariant Poincaré duality isomorphism
  from Lemma~\ref{thm:PD-nonequiv},
  which is therefore an isomorphism, too.
  The non-orientable case reduces to the orientable one:
  Since \(\hHc_{n}(\tilde X)=\hHc_{n}(\tilde X)_{-}=\hHc_{n}(X;\kktilde)\),
  capping with~\(o_{T}=\tilde o_{T}\)
  restricts to the claimed isomorphism. 
  
  The second isomorphism is a consequence of the first and the universal coefficient theorem
  (Proposition~\ref{thm:4:uct} and Remark~\ref{rem:twisted-properties}).
\end{proof}

\begin{remark}
  If \(X\) is orientable, then the two eigenspaces of~\(\tau\) in the decomposition
  \begin{equation}
    \HT^{*}(\tilde X) = \HT^{*}(X) \oplus \HT^{*}(X;\kktilde)
  \end{equation}
  are isomorphic as \(R\)-modules and even as modules over~\(\HT^{*}(X)\).
  Hence (co)ho\-mol\-ogy with twisted coefficients and with constant coefficients
  are isomorphic in this case, and these isomorphisms are compatible
  with the two Poincaré duality isomorphisms
  \(\HTc^{*}(X)\to \hHT_{*}(X)\) and \(\HTc^{*}(X;\kktilde)\to \hHT_{*}(X)\).
  Of course, in the case~\(\Char\kk=2\) there is no difference either.
\end{remark}

\subsection{Poincaré--Alexander--Lefschetz duality}
\label{sec:pal-duality}

Classically, Poincaré--Alexander--Lefschetz duality (also called ``Poincaré--Lefschetz duality'')
refers to an isomorphism of vector spaces 
\begin{equation}
  \label{eq:PAL-classical}
  \Hc^{*}(A,B) \cong H_{n-*}(X\setminus B,X\setminus A)
\end{equation}
for any closed pair~\((A,B)\) in an oriented \(n\)-dimensional manifold~\(X\),
\cf~\cite[\S VIII.7]{Dold:1980} 
or~\cite[\S VI.8]{Bredon:1993},
for instance.
In this section we generalize this to equivariant (co)ho\-mol\-ogy, and
we also derive a spectral sequence version of it.
Our approach is similar to the one in~\cite{Bredon:1993}.
We continue to assume that \(X\) is an \(n\)-dimensional 
\(\kk \)-homology manifold with a \(T\)-action.

\goodbreak
  
Our first result
includes Theorem~\ref{thm:PAL-intro} from the introduction.

\begin{theorem}[Poincaré--Alexander--Lefschetz duality]
  \label{thm:PAL-A-B}
  Let \((A,B)\) be a closed \(T\)-pair in~\(X\).
  Then there is a
  commutative diagram
  \begin{equation*}
    \begin{tikzcd}[font=\small,column sep=small]
    \rar & \HTc^{n-*}(A,B;\kktilde) \dar \rar & \HTc^{n-*}(A;\kktilde) \dar \rar & \HTc^{n-*}(B;\kktilde) \dar \rar{d} & \HTc^{n+1-*}(B;\kktilde) \dar \rar & {} \\
    \rar & \hHT_{*}(X\setminus B,X\setminus A) \rar & \hHT_{*}(X,X\setminus A) \rar & \hHT_{*}(X,X\setminus B) \rar{d} & \hHT_{*-1}(X,X\setminus B) \rar & {}
  \end{tikzcd}
  \end{equation*}
  all of whose vertical arrows are isomorphisms. 
  An analogous diagram exists with the roles of homology and cohomology interchanged
  and all arrows reversed. 
\end{theorem}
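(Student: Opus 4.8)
The plan is to reduce the entire diagram to the absolute equivariant Poincaré duality isomorphism of Proposition~\ref{thm:PD-noncompact}. The crux is an \emph{equivariant Poincaré--Alexander duality for a closed subset}: if $Y\subset X$ is open — hence itself an $n$-dimensional $\kk$-homology manifold, possibly disconnected, with orientation cover $\pi^{-1}(Y)$ and equivariant orientation $o_{T}^{Y}$ obtained by restricting $o_{T}$ — and $C\subset Y$ is a closed $T$-stable subset, then capping with $o_{T}^{Y}$ gives an isomorphism of $R$-modules
\begin{equation*}
  \HTc^{n-*}(C;\kktilde)\xrightarrow{\ \cong\ }\hHT_{*}(Y,Y\setminus C),
\end{equation*}
natural in $C$ with respect to inclusions of closed and of open $T$-subsets and compatible with connecting homomorphisms. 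Granting this, the theorem follows from three instances. Taking $(Y,C)=(X,A)$ and $(X,B)$ yields the two middle vertical isomorphisms. Taking $(Y,C)=\bigl(X\setminus B,\,A\setminus B\bigr)$ yields the leftmost one, once one identifies $\HTc^{*}(A,B;\kktilde)=\HTc^{*}(A\setminus B;\kktilde)$ by the twisted form of Proposition~\ref{thm:relative-cohomology-complement} (see Remark~\ref{rem:twisted-properties}) and notes that $(X\setminus B)\setminus(A\setminus B)=X\setminus A$. The top row of the diagram is then the long exact cohomology sequence of the $T$-pair~$(A,B)$, the bottom row is the long exact homology sequence of the triple $(X,\,X\setminus B,\,X\setminus A)$, and the displayed squares commute by the naturality just asserted; the remaining column and the connecting maps are covered the same way.

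For the key step one first checks that $o_{T}$ restricts to an equivariant orientation of each open subset $Y\subset X$; this is clear for connected $Y$ and follows component-wise otherwise, since Proposition~\ref{thm:PD-noncompact} and its proof (the Serre spectral sequence together with the sheaf-theoretic Lemma~\ref{thm:PD-nonequiv}) apply verbatim to a finite disjoint union of connected homology manifolds. One then defines the comparison maps by cap products with $o_{T}^{Y}$ on the level of the singular Cartan model with compact supports, using the $\tau$-equivariant cup-product pairing~\eqref{eq:cup-product-twisted-c} and the direct-limit description~\eqref{eq:definition-CTc}; this makes them compatible both with restriction to the open subset $Y\setminus C$ and with the connecting homomorphisms of the relevant long exact sequences. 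The outcome is a morphism from the long exact sequence $\cdots\to\HTc^{n-*}(Y\setminus C;\kktilde)\to\HTc^{n-*}(Y;\kktilde)\to\HTc^{n-*}(C;\kktilde)\to\cdots$ to the long exact sequence $\cdots\to\hHT_{*}(Y\setminus C)\to\hHT_{*}(Y)\to\hHT_{*}(Y,Y\setminus C)\to\cdots$ whose vertical arrows over $Y$ and over $Y\setminus C$ are isomorphisms by Proposition~\ref{thm:PD-noncompact}; the five lemma then makes the arrow over $C$ an isomorphism too.

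For non-orientable $X$ the twisted statement is deduced from the orientable case on~$\tilde X$: the cup product on $\tilde X$ is $\tau$-equivariant, so capping a $(-1)$-eigenclass with the $(-1)$-eigenclass $o_{T}=\tilde o_{T}$ lands in the $(+1)$-eigenspace, and restricting the (orientable) Poincaré--Alexander--Lefschetz diagram for $\tilde X$ to the appropriate $\tau$-eigenspaces via the splittings~\eqref{eq:splitting-HT} and~\eqref{eq:splitting-hHT} produces the diagram for~$X$. Finally, the ``interchanged'' diagram results from the same argument with the dual isomorphism $\hHTc_{*}(Y;\kktilde)\cong\HT^{n-*}(Y)$ of Proposition~\ref{thm:PD-noncompact} in place of the first one, or alternatively from applying the universal coefficient spectral sequence (Proposition~\ref{thm:4:uct} and Remark~\ref{rem:twisted-properties}) to the diagram already constructed.

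The step I expect to be the main obstacle is the chain-level construction of these relative cap products in the singular Cartan model with compact supports, and the careful verification that they are natural — that they commute with the maps induced by inclusions of open and of closed $T$-subsets and with all connecting homomorphisms — since this requires keeping track of the direct-limit definitions and the interaction of the two support conditions. Once this bookkeeping is in place, the remaining arguments (five lemma, eigenspace splitting, universal coefficients) are formal. An alternative to invoking Proposition~\ref{thm:PD-noncompact} would be to apply the Serre spectral sequences of Proposition~\ref{thm:4:serre-ss} to the comparison maps and reduce directly to non-equivariant Poincaré--Alexander--Lefschetz duality for homology manifolds (as in~\cite{Bredon:1997}), the point being that on $E_{2}$-pages one obtains precisely the $R$-linear extension of the non-equivariant duality isomorphisms.
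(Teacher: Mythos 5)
Your proposal is essentially the paper's own argument: reduce to the absolute Poincaré duality isomorphism of Proposition~\ref{thm:PD-noncompact} via a chain-level cap product with a representative of $o_T$, and propagate it to relative pairs by comparing long exact sequences and applying the five lemma. The ``main obstacle'' you single out — constructing the relative cap products in the singular Cartan model with compact supports so that they are natural for inclusions and compatible with connecting homomorphisms — is precisely what the paper handles through the direct-limit complexes $\barCTc^{*}(-,-;\kktilde)$ built from filtration-adapted open covers $\UU$ and the relative cup product of Lemma~\ref{thm:relative-cup-Ui}; together with Proposition~\ref{thm:relative-cohomology-complement} this yields the ladder of Lemma~\ref{thm:hCTXi-hCThatXi-iso}, from which the theorem follows exactly as you describe.
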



Since we also want to prove an extension to spectral sequences,
we place ourselves in a slightly more general situation.
Consider an increasing filtration
\begin{equation}
  \emptyset=X_{-1}\subset X_{0}\subset\dots\subset X_{m}=X
\end{equation}
of~\(X\) by 
closed \(T\)-stable subsets.
(
See Remark~\ref{rem:4:loc-contractible} on how to do without the standing assumption
of local contractability.)
We set \(\hatX_{i}=X\setminus X_{i}\), so that the decreasing filtration
of~\(X\) by the open complements of the~\(X_{i}\) can be written as
\begin{equation}
  X=\hatX_{-1}\supset \dots \supset \hatX_{m-1} \supset \hatX_{m}=\emptyset.
\end{equation}

\def\CU#1{C^{*}(#1\,|\,\UU;\kktilde)}
\def\CTU#1{\CT^{*}(#1\,|\,\UU;\kktilde)}
Let \(\pi\colon\tilde X\to X\) be the orientation cover, and let
\(U=(U_{-1},U_{0},\dots,U_{m})\) be an increasing sequence of open subsets of~\(X\)
such that \(X\setminus U_{-1}\) is compact and \(X_{i}\subset U_{i}\) for all~\(i\).
Any such sequence determines an open cover
\begin{equation}
  \UU=\bigl\{\pi^{-1}(U_{0}),\pi^{-1}(U_{1}\setminus X_{0}),\dots,\pi^{-1}(U_{m}\setminus X_{m-1})\bigr\}
\end{equation}
of~\(\tilde X\). We write \(\CU{X}\) for the complex of \(\UU\)-small cochains
and similarly \(\CTU{X}=\CU{X}\otimes R\) for the corresponding singular Cartan model.
We start by establishing a variant of the cup product~\eqref{eq:cup-product-twisted-c}.

\begin{lemma}
  \label{thm:relative-cup-Ui}
  For any~\(-1\le i\le j\le m\)
  there is a well-defined relative cup~product
  \begin{equation*}
    \CT^{*}(X,U_{j};\kktilde)\otimes\CT^{*}(\hatX_{i}) \stackrel{\cup}\longrightarrow \CTU{X}.
  \end{equation*}
  It is compatible with restrictions in the sense that the diagram
  \begin{equation*}  
  \begin{tikzcd}
    \CT^{*}(X,U_{j};\kktilde)\otimes\CT^{*}(\hatX_{j}) \arrow{r}{\cup} & \CTU{X} \\
    \CT^{*}(X,U_{j};\kktilde)\otimes\CT^{*}(\hatX_{i}) \arrow{r}{\cup} \arrow{u} \arrow{d} & \CTU{X}  \arrow{u}[right]{=} \arrow{d}[right]{=} \\
    \CT^{*}(X,U_{i};\kktilde)\otimes\CT^{*}(\hatX_{i}) \arrow{r}{\cup} & \CTU{X}
  \end{tikzcd}
  \end{equation*}
  commutes.
\end{lemma}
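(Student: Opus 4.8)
The plan is to define the pairing at the level of ordinary singular cochains on $\tilde X$ and then descend to the eigenspaces and tensor with $R$. First I would recall the classical observation underlying relative cup products: if $\alpha\in C^*(X,U_j)$ is a cochain vanishing on chains in $U_j$, and $\beta\in C^*(\hat X_i)$, then the product $\alpha\cup\beta$ vanishes on any simplex whose image lies in $U_j$ \emph{or} in $\hat X_i$, because the cup product $\alpha\cup\beta$ evaluated on a simplex $\sigma$ uses the front face of $\sigma$ for $\alpha$ and the back face for $\beta$, and both faces lie in the same open set as $\sigma$. Since $i\le j$ we have $X = U_j \cup \hat X_{i}$ — indeed $X\setminus U_j\subset X\setminus X_j\subset X\setminus X_i=\hat X_i$ — so every $\UU$-small simplex of $\tilde X$ lies (after pulling back) in $\pi^{-1}(U_j)$ or in $\pi^{-1}(\hat X_i)$; hence $\alpha\cup\beta$ represents a $\UU$-small cochain. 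Working $\tau$-equivariantly: the cup product on $C^*(\tilde X)$ is $\tau$-equivariant, so it sends $C^*(X,U_j;\kktilde)\otimes C^*(\hat X_j)\subset C^*(\tilde X,\pi^{-1}(U_j))_-\otimes C^*(\tilde X,\pi^{-1}(\hat X_i))_+$ into the $(-1)$-eigenspace, which is exactly $C^*(X\,|\,\UU;\kktilde)$. Tensoring with $R$ and checking that this is compatible with the Cartan differential — which follows because the $C_*(T)$-action respects the cup product and the splitting into eigenspaces — gives the $R$-bilinear pairing $\CT^*(X,U_j;\kktilde)\otimes\CT^*(\hat X_i)\to\CTU{X}$.

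Next I would verify commutativity of the diagram. The two vertical maps on the left of the middle row are, respectively, the restriction $\CT^*(\hat X_i)\to\CT^*(\hat X_j)$ induced by the inclusion $\hat X_j\hookrightarrow\hat X_i$ (note $X_i\subset X_j$ forces $\hat X_j\subset\hat X_i$), tensored with the identity on the first factor; and the quotient map $\CT^*(X,U_i;\kktilde)\to\CT^*(X,U_j;\kktilde)$ induced by $U_i\subset U_j$, tensored with the identity on $\CT^*(\hat X_i)$. Commutativity is then the purely formal statement that restricting one factor of a cup product and then multiplying equals multiplying and then restricting — valid because all three cup products in the diagram are literally the \emph{same} formula on cochains, just with the two factors living in progressively larger (or the target in progressively smaller-support) subspaces, and all the vertical maps are inclusions of subcomplexes or quotient maps that are identities on the relevant cochains. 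The right-hand vertical maps are marked as equalities, reflecting that the target $\CTU{X}$ does not change.

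I expect the only genuinely delicate point to be the well-definedness of the pairing — specifically, confirming that when $\alpha\in\CT^*(X,U_j;\kktilde)$ and $\beta\in\CT^*(\hat X_i)$ the product $\alpha\cup\beta$, viewed on $\tilde X$, is annihilated by chains subordinate to the cover $\UU$ (not merely by chains in $\pi^{-1}(U_j)\cup\pi^{-1}(\hat X_i)$, which a priori is coarser than $\UU$). This is handled by the front-face/back-face support argument above: a $\UU$-small simplex maps into one of $\pi^{-1}(U_0), \pi^{-1}(U_1\setminus X_0),\dots$, each of which is contained in some $\pi^{-1}(U_k)$; if $k\le j$ then $\beta$ already sees it via $\hat X_i\supset \hat X_k\supset U_k\setminus X_{k-1}$-adjacent pieces... more carefully, one splits into the cases $k\le j$ (so the simplex lies in $\pi^{-1}(U_j)$ and $\alpha$ kills its front face) and $k>j$ (so $U_k\setminus X_{k-1}\subset U_k\setminus X_j\subset \hat X_j\subset\hat X_i$, and $\beta$ kills its back face). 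Everything else — $R$-bilinearity, compatibility with the Cartan differential via the Leibniz rule, and the diagram chase — is routine and I would state it as such. This establishes Lemma~\ref{thm:relative-cup-Ui}.
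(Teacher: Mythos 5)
The overall plan — define the product on cochains of $\tilde X$, reduce to a case analysis on $\UU$-small simplices, then incorporate the $\tau$-action and tensor with $R$ — is the paper's strategy. But there is a real gap at the core of the well-definedness argument. An element $\beta\in C^{*}(\hatX_{i})$ is a cochain on the subspace $\hatX_{i}$; it is \emph{not} a cochain on $X$ or on $\tilde X$, and in particular it is not a relative cochain in $C^{*}(X,X_{i})$. So the expression ``$\alpha\cup\beta$'' you manipulate is not yet defined: one must first choose an extension $\hat\beta\in C^{*}(X)$ with $\hat\beta|_{\hatX_{i}}=\beta$ (there is no canonical one; $C^{*}(X)\to C^{*}(\hatX_{i})$ is a surjection, not an inclusion) and set the product to be $\alpha\cup\pi^{*}(\hat\beta)$, as the paper does. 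Consequently the claims that ``$\alpha\cup\beta$ vanishes on any simplex whose image lies in $U_{j}$ or in $\hatX_{i}$'' and later that ``$\beta$ kills its back face'' are simply false: the product does \emph{not} vanish on simplices inside $\hatX_{i}$ — if it did, the whole construction would be zero. What the front-face/back-face observation actually buys you is \emph{independence of the choice of extension}: for a $\UU$-small simplex $\sigma$, either $\sigma\subset\pi^{-1}(U_{j})$, in which case $\alpha$ annihilates its front face and $(\alpha\cup\pi^{*}\hat\beta)(\sigma)=0$ regardless of $\hat\beta$; or $\sigma\subset\pi^{-1}(\hatX_{i})$, in which case the back face lies in $\pi^{-1}(\hatX_{i})$ and $\hat\beta$'s value there depends only on $\beta$, not on the extension. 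That is the correct mechanism; your case split $k\le j$ versus $k>j$ is exactly the right combinatorics, but the reason each case works needs to be restated.

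The same confusion propagates to the eigenspace step: the inclusion ``$C^{*}(\hatX_{j})\subset C^{*}(\tilde X,\pi^{-1}(\hatX_{i}))_{+}$'' is not meaningful — there is no such inclusion of cochain complexes, and $C^{*}(\tilde X,\pi^{-1}(\hatX_{i}))$ is in any case cochains on $\tilde X$ vanishing on $\pi^{-1}(\hatX_{i})$, which is the opposite of what you want. What actually happens is that $\alpha\in C^{*}(\tilde X,\pi^{-1}(U_{j}))_{-}$ while $\pi^{*}(\hat\beta)\in C^{*}(\tilde X)_{+}$, and $\tau$-equivariance of the cup product places the well-defined product in the $(-1)$-eigenspace of the $\UU$-small cochains. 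A smaller slip: the downward arrow in the diagram is the inclusion $\CT^{*}(X,U_{j};\kktilde)\hookrightarrow\CT^{*}(X,U_{i};\kktilde)$ (a cochain vanishing on $U_{j}$ vanishes a fortiori on the smaller set $U_{i}$), not a quotient map in the other direction; this does not affect commutativity but is another sign the roles of inclusions and restrictions of cochains have been crossed. Fixing the extension issue and rephrasing the key step in terms of extension-independence would bring your argument into alignment with the paper's proof.
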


\begin{proof}
  The product of~\(\alpha\otimes f\in\CT^{*}(X,U_{j};\kktilde)\) and \(\beta\otimes g\in\CT^{*}(\hatX_{i})\)
  is defined by
  \begin{equation}
    (\alpha\otimes f)\cup(\beta\otimes g) = \alpha\cup\pi^{*}(\hat\beta)\otimes fg,
  \end{equation}
  where \(\hat\beta\in C^{*}(X)\) is a preimage of~\(\beta\). 

  To show that this is well-defined,
  consider a \(\UU\)-small singular simplex~\(\sigma\) in~\(\tilde X\). If \(\sigma\) lies
  in~\(\pi^{-1}(U_{j})\), then so does any face~\(\sigma'\) of it. Hence \(\alpha(\sigma')=0\)
  and therefore \((\alpha\cup\pi^{*}(\hat\beta))(\sigma)=0\). 
  If \(\sigma\) does not lie in~\(\pi^{-1}(U_{j})\), then it lies in~\(\pi^{-1}(\hatX_{i})\supset\pi^{-1}(\hatX_{j})\) since it
  is \(\UU\)-small, and \((\alpha\cup\pi^{*}(\hat\beta))(\sigma)\) is again independent of
  the choice of~\(\hat\beta\).
  
  The commutativity of the diagram is clear by construction.
\end{proof}

For \(i\le j\), write
\begin{equation}
  \barCTc^{*}(X_{j},X_{i};\kktilde) = \dirlim \CT^{*}(U_{j},U_{i};\kktilde)
\end{equation}
where the direct limit is taken over all open covers~\(\UU\)
induced by sequences~\(U\) as above,
and let \(\barhCTc_{*}(X_{j},X_{i};\kktilde)\) be the \(R\)-dual complex.
Note that for~\(i\le j\le k\) we have short exact sequences
\begin{equation}
  \label{eq:XXj-XXi-XjXi}
  0 \to \barCTc^{*}(X_{k},X_{j};\kktilde) \to \barCTc^{*}(X_{k},X_{i};\kktilde) \to \barCTc^{*}(X_{j},X_{i};\kktilde) \to 0,
\end{equation}
and the canonical maps
\(\barCTc^{*}(X_{j},X_{i};\kktilde)\to\CTc^{*}(X_{j},X_{i};\kktilde)\)
and
\(\hCTc_{*}(X_{j},X_{i};\kktilde)\to\barhCTc_{*}(X_{j},X_{i};\kktilde)\)
are quasi-isomorphisms by tautness.

By passing to the direct limit in Lemma~\ref{thm:relative-cup-Ui}, we get
the family of relative cup products
\begin{equation}
  \label{eq:relative-cup-Xi}
  \barCTc^{*}(X,X_{i};\kktilde)\otimes\CT^{*}(\hatX_{i}) \stackrel{\cup}\longrightarrow \barCTc^{*}(X;\kktilde).
\end{equation}
Fix a representative~\(c_{T}\in\barhCTc_{n}(X;\kktilde)\)
of the equivariant orientation~\(o_{T}\in\hHTc_{n}(X;\kktilde)\).
Composition of~\eqref{eq:relative-cup-Xi} with~\(c_{T}\) yields
a pairing
\( 
  \barCTc^{*}(X,X_{i};\kktilde)\otimes\CT^{*}(\hatX_{i}) \to R
\), 
which we interpret as a map
\begin{equation}
  \label{eq:map-hCTXi-hCThatXi}
  f_{i}\colon\barCTc^{*}(X,X_{i};\kktilde)\to\hCT_{*}(\hatX_{i}).
\end{equation}

\begin{lemma}
  \label{thm:hCTXi-hCThatXi-iso}
  The map~\eqref{eq:map-hCTXi-hCThatXi} is a quasi-isomorphism.
  Moreover, for~\(i\le j\) it leads to a commutative diagram
  \begin{equation*}
    \begin{tikzcd} 
    0 \rar & \barCTc^{*}(X,X_{j};\kktilde) \dar{f_{j}} \rar & \barCTc^{*}(X,X_{i};\kktilde) \dar{f_{i}} \rar & \barCTc^{*}(X_{j},X_{i};\kktilde) \dar{f_{ji}} \rar & 0 \\
    0 \rar & \hCT_{*}(\hatX_{j}) \rar & \hCT_{*}(\hatX_{i}) \rar & \hCT_{*}(\hatX_{i},\hatX_{j}) \rar & 0 \mathrlap{,}
  \end{tikzcd}
  \end{equation*}  
  whose rows are exact and whose induced map~\(f_{ji}\) is a quasi-isomorphisms as well.
\end{lemma}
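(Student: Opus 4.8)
The plan is to prove the first assertion — that $f_i$ is a quasi-isomorphism — by the same inductive device already used in the proof of Proposition~\ref{thm:locally-free-action}: induct on $i$, using the orientable Poincaré duality case together with the short exact sequences~\eqref{eq:XXj-XXi-XjXi}. Concretely, I would first settle the case $i=m$, where $X_m=X$ forces $\hatX_m=\emptyset$, so $f_m$ is the map $\barCTc^{*}(X,X;\kktilde)\to 0$ and there is nothing to prove; and then the case $i=-1$, where $X_{-1}=\emptyset$, $\hatX_{-1}=X$, and $f_{-1}\colon\barCTc^{*}(X;\kktilde)\to\hCT_{*}(X)$ is exactly capping with $c_T$ up to the tautness quasi-isomorphism $\barCTc^{*}(X;\kktilde)\to\CTc^{*}(X;\kktilde)$. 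That $f_{-1}$ is a quasi-isomorphism is the chain-level refinement of the Poincaré duality isomorphism of Proposition~\ref{thm:PD-noncompact} (via the Serre spectral sequence argument given there). For the non-orientable case one passes to $\tilde X$ and takes the $\tau$-eigenspace for $-1$, exactly as in that proof.

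For the inductive step, I would fix $i<m$ and assume $f_{i+1}$ is a quasi-isomorphism. Applying Lemma~\ref{thm:relative-cup-Ui} (and its direct-limit consequence) I get the commutative ladder displayed in the statement with $j$ replaced by $i+1$: the top row is~\eqref{eq:XXj-XXi-XjXi} with $(k,j)=(i+1,i)$ ... wait, it is the sequence $0\to\barCTc^{*}(X,X_{i+1};\kktilde)\to\barCTc^{*}(X,X_i;\kktilde)\to\barCTc^{*}(X_{i+1},X_i;\kktilde)\to0$, and the bottom row is the long exact sequence of the triple $(X,\hatX_{i+1},\hatX_i)=(X,X\setminus X_{i+1},X\setminus X_i)$, i.e.
\begin{equation*}
  0\to\hCT_{*}(\hatX_{i+1})\to\hCT_{*}(\hatX_i)\to\hCT_{*}(\hatX_i,\hatX_{i+1})\to 0,
\end{equation*}
which is short exact because each $\CT^{*}(-)$ is a cochain complex built from a surjection of cochain groups (the restriction $C^{*}(\hatX_i)\twoheadrightarrow C^{*}(\hatX_{i+1})$ is onto since $\hatX_{i+1}\subset\hatX_i$ is open), so the $R$-dual sequence is also short exact. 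Commutativity of the left square is the direct-limit form of the lower square in Lemma~\ref{thm:relative-cup-Ui}, and commutativity of the right square identifies $f_{ji}$ (with $j=i+1$) as the induced quotient map. Granting that $f_{i+1}$ is a quasi-isomorphism, the five-lemma reduces proving $f_i$ is one to proving that $f_{i+1,i}\colon\barCTc^{*}(X_{i+1},X_i;\kktilde)\to\hCT_{*}(\hatX_i,\hatX_{i+1})$ is a quasi-isomorphism — and after the tautness quasi-isomorphisms this is precisely the Poincaré--Alexander--Lefschetz statement for the closed pair $(X_{i+1},X_i)$, i.e. a local (excised) version of the $i=-1$ case, which one proves exactly as that case since $\hCT_{*}(\hatX_i,\hatX_{i+1})\simeq\hCT_{*}(\hatX_i\setminus\hatX_{i+1})=\hCT_{*}(X_{i+1}\setminus X_i)$ by Proposition~\ref{thm:relative-cohomology-complement} applied to the complement, and $X_{i+1}\setminus X_i$ is an open subset of the homology manifold structure on $X_{i+1}$ rel $X_i$... . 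In other words the general case and the absolute case feed into each other exactly as in the classical treatment in~\cite{Bredon:1993}, so once the absolute duality quasi-isomorphism $f_{-1}$ is in hand, everything else is the five-lemma.

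Once $f_i$ is known to be a quasi-isomorphism for all $i$, the final clause is automatic: the two short exact sequences~\eqref{eq:XXj-XXi-XjXi} and the triple sequence fit into the displayed ladder for any $i\le j$ by naturality of the cup products in Lemma~\ref{thm:relative-cup-Ui}, and the induced map $f_{ji}$ on the quotient complexes is then a quasi-isomorphism by the five-lemma applied to the long exact sequences in (co)homology, since the outer two vertical maps $f_j$ and $f_i$ are quasi-isomorphisms. I expect the main obstacle to be the base case $f_{-1}$ (and its excised variant $f_{i+1,i}$): one must check that the \emph{chain-level} cap product with a chosen cycle $c_T$ representing $o_T$ really induces the Poincaré duality isomorphism of Proposition~\ref{thm:PD-noncompact} — this is where tautness, the passage to the orientation cover, and the Serre spectral sequence comparison all have to be assembled carefully, including the bookkeeping with the $\UU$-small cochains $\CU{X}$ needed to make the cup product of Lemma~\ref{thm:relative-cup-Ui} well-defined. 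The inductive part, by contrast, is purely formal.
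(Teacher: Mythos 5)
The paper's own proof is \emph{not} inductive: it establishes directly that $f_{i}$ is a quasi-isomorphism for \emph{every} $i$ simultaneously, and then deduces the quasi-isomorphy of $f_{ji}$ by the five-lemma. The key observation, which your proposal misses, is that $f_{i}$ can be identified with the \emph{absolute} Poincaré duality map $\HTc^{*}(\hatX_{i};\kktilde)\to\hHT_{*}(\hatX_{i})$ via the (twisted-coefficients extension of the) identification $\HTc^{*}(X,X_{i};\kktilde)\cong\HTc^{*}(\hatX_{i};\kktilde)$ from Proposition~\ref{thm:relative-cohomology-complement} applied to the closed pair $(X,X_{i})$. Since $\hatX_{i}$ is open in $X$, each of its components is a $\kk$-homology manifold with the restricted orientation cover, and Proposition~\ref{thm:PD-noncompact} applies. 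That step does all the work, uniformly in $i$.

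Your inductive strategy has a genuine gap. You induct downward from $i=m$, and at each step the five-lemma reduces ``$f_{i}$ is a quasi-isomorphism'' to ``$f_{i+1}$ and $f_{i+1,i}$ are quasi-isomorphisms''. But you then need an \emph{independent} argument for $f_{i+1,i}$, and the one you sketch does not hold up. You describe $f_{i+1,i}$ as ``a local (excised) version of the $i=-1$ case'' and invoke Proposition~\ref{thm:relative-cohomology-complement} to replace $\hCT_{*}(\hatX_{i},\hatX_{i+1})$ by $\hCT_{*}(X_{i+1}\setminus X_{i})$. This is wrong on two counts. First, that proposition concerns $\HTc^{*}$ and $\hHTc_{*}$ (the compact/closed pair of supports) for \emph{closed} $T$-pairs, not $\hHT_{*}$ of the open pair $(\hatX_{i},\hatX_{i+1})$, so it does not yield the excision you want. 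Second, even if such an excision were available, $X_{i+1}\setminus X_{i}$ is only locally closed in $X$; it is not open, hence not in general a $\kk$-homology manifold, so Proposition~\ref{thm:PD-noncompact} cannot be applied to it. Indeed, the lemma is stated for an \emph{arbitrary} filtration by closed $T$-stable sets, not only the orbit filtration, so there is no structure on $X_{i+1}\setminus X_{i}$ to exploit. Moreover, the Poincaré--Alexander--Lefschetz statement for the closed pair $(X_{i+1},X_{i})$ is exactly what Theorem~\ref{thm:PAL-A-B} proves \emph{using} this lemma, so appealing to it here would be circular. In short: the ``pair'' map $f_{i+1,i}$ is not a simpler object than the ``relative'' map $f_{i}$; it is actually the harder one. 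The paper resolves this by noticing that each $f_{i}$ is equivalent to an absolute statement for the \emph{open} set $\hatX_{i}$, for which Poincaré duality is available, and then obtaining $f_{ji}$ as a \emph{consequence} via the five-lemma, rather than as an input to an induction.
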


\begin{proof}
  The exactness of the top row in the diagram was already observed in~\eqref{eq:XXj-XXi-XjXi}.
  The compatibility of relative cup products with restrictions stated in Lemma~\ref{thm:relative-cup-Ui}
  implies that the left square in the diagram commutes, which induces the right vertical arrow.

  Note that Proposition~\ref{thm:relative-cohomology-complement}
  remains valid for twisted coefficients, and that the diagram
  \begin{equation}
  \begin{tikzcd} 
    \HTc^{*}(X,X_{i};\kktilde) \otimes \hHTc_{*}(X,X_{i};\kktilde) \arrow{r} \arrow{d}[left]{\cong} & R \arrow{d}{=} \\
    \HTc^{*}(\hatX_{i};\kktilde) \otimes \hHTc_{*}(\hatX_{i};\kktilde) \arrow{r} & R
  \end{tikzcd}
  \end{equation}
  is commutative.
  Moreover, the restriction of the orientation~\(o_{T}\) to any component of~\(\hatX_{i}\)
  is again an orientation.
  Hence the map~\eqref{eq:map-hCTXi-hCThatXi} corresponds
  to the map
  \( 
    \HTc^{*}(\hatX_{i};\kktilde) \to \hHT_{*}(\hatX_{i})
  \), 
  which is an isomorphism by Proposition~\ref{thm:PD-noncompact}.

  Coming back to the commutative ladder,
  two out of the three maps between the corresponding long exact sequences in (co)homology
  are isomorphisms, hence so is the third.
\end{proof}

\begin{proof}[Proof of Theorem~\ref{thm:PAL-A-B}]
  Consider the filtration \(\emptyset\subset B\subset A\subset X\) of~\(X\)
  and the associated diagram
  \begin{equation*}
    \begin{tikzcd} 
    0 \rar & \barCTc^{*}(A,B;\kktilde) \dar{f_{AB}} \rar & \barCTc^{*}(A;\kktilde) \dar{f_{A}} \rar & \barCTc^{*}(B;\kktilde) \dar{f_{B}} \rar & 0 \\
    0 \rar & \hCT_{*}(X\setminus B,X\setminus A) \rar & \hCT_{*}(X,X\setminus A) \rar & \hCT_{*}(X,X\setminus B) \rar & 0 \mathrlap{,}
  \end{tikzcd}
  \end{equation*}
  whose top row is again of the form~\eqref{eq:XXj-XXi-XjXi}.
  The maps~\(f_{A}\) and~\(f_{B}\) are special cases of the map~\(f_{ji}\) from Lemma~\ref{thm:hCTXi-hCThatXi-iso}.
  It follows from their definition that the right square commutes, which induces the map~\(f_{AB}\).
  By passing to (co)homology we get the commutative ladder stated in Theorem~\ref{thm:PAL-A-B}.
  Since \(H^{*}(f_{A})\) and \(H^{*}(f_{B})\) are isomorphisms by Lemma~\ref{thm:hCTXi-hCThatXi-iso},
  so is \(H^{*}(f_{AB})\). This proves the first part of the theorem.
  
  The analogous result with the roles of (co)homology reversed is obtained
  by applying the functor~\(\Hom_{R}(-,R)\) to the diagram above.
  Because the short sequences in the diagram split over~\(R\),
  their duals remain exact.
  Moreover, the natural inclusion of~\(\CT^{*}(A,B)\) into its double dual is a chain homotopy equivalence.
  This follows from the fact that for the chain-equivalent minimal Hirsch--Brown model,
  which is free and finitely generated over~\(R\), the corresponding map is even an isomorphism.
\end{proof}

A spectral sequence version of equivariant Poincaré--Alexander--Lefschetz duality
is as follows:

\begin{proposition}
  \label{thm:PAL-duality}
  Let \(o_{T}\in\hHTc_{n}(X;\kktilde)\) be an equivariant orientation of~\(X\).
  Taking the cap product with~\(o_{T}\)
  induces an isomorphism (of degree~\(-n\)) from the \(E_{1}\)~page on between the spectral sequences
  \begin{align*}
    E_{1}^{p} &= \HTc^{*}(X_{p},X_{p-1};\kktilde) \;\Rightarrow\;  \HTc^{*}(X;\kktilde), \\
    E_{1}^{p} &= \hHT_{*}(\hatX_{p-1},\hatX_{p}) \;\Rightarrow\;  \hHT_{*}(X).
  \end{align*}
  Similarly, the spectral sequences
  \begin{align*}
    E_{1}^{p} &= \hHTc_{*}(X_{p},X_{p-1};\kktilde) \;\Rightarrow\;  \hHTc_{*}(X;\kktilde), \\
    E_{1}^{p} &= \HT^{*}(\hatX_{p-1},\hatX_{p}) \;\Rightarrow\;  \HT^{*}(X)
  \end{align*}
  are isomorphic from the \(E_{1}\)~page on.
\end{proposition}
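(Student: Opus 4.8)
The plan is to recognise both pairs of spectral sequences as those attached to explicit filtered dg~$R$-modules, and then to produce a single filtration-preserving chain map between them which is a quasi-isomorphism on every associated graded piece; by Lemma~\ref{thm:hCTXi-hCThatXi-iso} that comparison map is in essence already built, so the work will be bookkeeping rather than new geometry.

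First I would set up the filtered complexes. On the cohomology side I would filter $\barCTc^{*}(X;\kktilde)$ by the decreasing family of subcomplexes $F^{p}=\barCTc^{*}(X,X_{p-1};\kktilde)$ for $0\le p\le m+1$, so that $F^{0}=\barCTc^{*}(X;\kktilde)$ since $X_{-1}=\emptyset$ and $F^{m+1}=0$; by the short exact sequences~\eqref{eq:XXj-XXi-XjXi} the subquotient $F^{p}/F^{p+1}$ is $\barCTc^{*}(X_{p},X_{p-1};\kktilde)$, with cohomology $\HTc^{*}(X_{p},X_{p-1};\kktilde)$, and since $\barCTc^{*}(-;\kktilde)\to\CTc^{*}(-;\kktilde)$ is a quasi-isomorphism by tautness the associated spectral sequence is the first one listed. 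On the homology side I would filter $\hCT_{*}(X)$ by $G^{p}=\hCT_{*}(\hatX_{p-1})$, using the short exact sequences forming the bottom row of Lemma~\ref{thm:hCTXi-hCThatXi-iso} to identify $G^{p}/G^{p+1}\cong\hCT_{*}(\hatX_{p-1},\hatX_{p})$, whose homology is $\hHT_{*}(\hatX_{p-1},\hatX_{p})$. Both filtrations have finite length, so convergence of both spectral sequences is automatic.

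Next I would observe that the maps $f_{i}\colon\barCTc^{*}(X,X_{i};\kktilde)\to\hCT_{*}(\hatX_{i})$ of~\eqref{eq:map-hCTXi-hCThatXi}, ranging over $i=-1,\dots,m$, fit together into one morphism of filtered complexes $f=f_{-1}\colon\barCTc^{*}(X;\kktilde)\to\hCT_{*}(X)$ of degree~$-n$: the commutative ladder of Lemma~\ref{thm:hCTXi-hCThatXi-iso} (taken with $i=-1$, $j=p-1$) shows that $f$ sends $F^{p}$ into $G^{p}$ and induces on the $p$-th subquotient exactly $f_{p,p-1}$, which that lemma asserts is a quasi-isomorphism. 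Hence $f$ induces an isomorphism on the $E_{1}$~pages, and therefore an isomorphism of spectral sequences from $E_{1}$ on, compatibly with the abutment isomorphism $\HTc^{*}(X;\kktilde)\cong\hHT_{n-*}(X)$ of Proposition~\ref{thm:PD-noncompact} (the case $B=\emptyset$, $A=X$ of Theorem~\ref{thm:PAL-A-B}); the shift by~$-n$ is just the homological degree~$n$ of the representative~$c_{T}$. For the second pair of spectral sequences I would apply $\Hom_{R}(-,R)$ to this morphism of filtered complexes: exactly as in the proof of Theorem~\ref{thm:PAL-A-B}, the subquotient short exact sequences split over~$R$ and so remain exact after dualisation, and each complex is $R$-homotopy equivalent to its minimal Hirsch--Brown model, which is finitely generated and free over~$R$, so biduality is a homotopy equivalence; this identifies the dual filtered complexes as computing $\hHTc_{*}(X;\kktilde)$ and $\HT^{*}(X)$ with the stated $E_{1}$~terms $\hHTc_{*}(X_{p},X_{p-1};\kktilde)$ and $\HT^{*}(\hatX_{p-1},\hatX_{p})$, and $\Hom_{R}(f,R)$ provides the isomorphism from $E_{1}$ on.

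I expect the only real friction to be a careful matching of indices and degrees — reindexing the two filtrations so their subquotients carry precisely the advertised $E_{1}^{p}$, and tracking the degree shift~$-n$ — together with the routine verification that $\Hom_{R}(-,R)$ may be applied term by term, which the $R$-freeness-up-to-homotopy and $R$-splitness recorded above handle. No genuinely new idea should be required beyond Lemma~\ref{thm:hCTXi-hCThatXi-iso}, and convergence is free from the finiteness of the chosen $T$-stable filtration.
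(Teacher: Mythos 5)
Your proposal is correct and matches the paper's argument essentially step for step: the paper likewise filters $\barCTc^{*}(X;\kktilde)$ by $\barCTc^{*}(X,X_{i-1};\kktilde)$ and $\hCT_{*}(X)$ by $\hCT_{*}(\hatX_{i-1})$, uses the commutative ladder of Lemma~\ref{thm:hCTXi-hCThatXi-iso} to get a map of filtered complexes whose map on each $E_{0}$-subquotient is $f_{p,p-1}$ (a quasi-isomorphism by that lemma), and obtains the second pair by applying $\Hom_{R}(-,R)$, invoking $R$-splitness and biduality exactly as in the proof of Theorem~\ref{thm:PAL-A-B}. The only cosmetic difference is that you spell out the single filtered chain map $f=f_{-1}$ and the biduality bookkeeping a bit more explicitly, where the paper simply refers to the earlier proof.
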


\begin{proof}
  We filter \(\barCTc^{*}(X;\kktilde)\) by~\(\FF_{i} = \barCTc^{*}(X,X_{i-1};\kktilde)\) for~\(0\le i\le m\)
  and similarly \(\hCT_{*}(X)\) by~\(\widehat\FF_{i} = \hCT_{*}(\hatX_{i-1})\).
  We know from Lemma~\ref{thm:hCTXi-hCThatXi-iso}
  that the diagram
  \begin{equation}
    \begin{tikzcd}
      \FF_{j} = \barCTc^{*}(X,X_{j};\kktilde) \arrow{r} \arrow{d} & \hCT_{*}(\hatX_{j}) = \widehat\FF_{j} \arrow{d} \\
      \FF_{i} = \barCTc^{*}(X,X_{i};\kktilde) \arrow{r} & \hCT_{*}(\hatX_{i}) = \widehat\FF_{i} 
    \end{tikzcd}
  \end{equation}
  commutes for~\(i\le j\),
  so that we obtain a map of spectral sequences with
  \begin{align}
    E_{0}^{i}(\FF) = \barCTc^{*}(X_{i},X_{i-1};\kktilde)
    &\to
    E_{0}^{i}(\widehat\FF) = \hCT_{*}(\hatX_{i-1},\hatX_{i}), \\
    \label{eq:PAL-E1}
    E_{1}^{i}(\FF) = \HTc^{*}(X_{i},X_{i-1};\kktilde)
    &\to
    E_{1}^{i}(\widehat\FF) = \hHT_{*}(\hatX_{i-1},\hatX_{i}).
  \end{align}
  It follows as in the proof of Theorem~\ref{thm:PAL-A-B}
  that the map~\eqref{eq:PAL-E1} is an isomorphism.
  The second part 
  follows analogously
  by dualizing \eqref{eq:map-hCTXi-hCThatXi} and the filtrations
  \(\FF\)~and~\(\widehat\FF\).
\end{proof}

Equipped with equivariant Poincaré--Alexander--Lefschetz duality,
we can easily deduce the following result, which is asserted
in~\cite[p.~849]{Bredon:1974} without proof.

\begin{corollary}
  \label{thm:quotient-homology-mf}
  If \(X\) is orientable and \(T\) acts locally freely,
  then \(X/T\) is an orientable \(\kk\)-homology manifold of dimension~\(n-r\).
\end{corollary}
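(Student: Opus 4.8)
The plan is to verify directly the two defining properties of an orientable \(\kk\)-homology manifold of dimension~\(n-r\) for~\(X/T\): the local homology condition~\eqref{eq:def-homology-mf} at each point, and the identity \(\hHc_{n-r}(X/T)\cong\kk\). First I would record that \(X/T\) is connected, being a continuous image of the connected space~\(X\), and that it satisfies all the standing assumptions of Section~\ref{sec:4:assumptions} by Remark~\ref{rem:quotient}. Since the \(T\)-action is locally free, every isotropy group \(T_{x}\) is discrete and hence finite, so each orbit \(Tx\cong T/T_{x}\) is a compact submanifold of~\(X\), closed and \(T\)-stable. Taking \(K=T\) (and \(L\) trivial) in the cohomological part of Proposition~\ref{thm:locally-free-action} gives \(\HT^{*}(Tx)=H^{*}((Tx)/T)\); as \((Tx)/T\) is a single point, \(\HT^{*}(Tx)\) is \(\kk\) concentrated in degree~\(0\). (If the action is only locally free, rather than free, we must assume \(\Char\kk=0\), exactly as Proposition~\ref{thm:locally-free-action} requires; for a free action all \(T_{x}\) are trivial and no hypothesis on~\(\kk\) is needed.)

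For the local homology condition, fix \(\bar{x}\in X/T\) and let \(\pi\colon X\to X/T\) be the projection, so \(\pi^{-1}(\bar{x})=Tx\) and \(X/T\setminus\{\bar{x}\}=(X\setminus Tx)/T\). Because \(T\) acts locally freely on all of~\(X\), the final assertion of Proposition~\ref{thm:locally-free-action}, applied to the \(T\)-pair \((X,X\setminus Tx)\), gives \(\hHT_{*}(X,X\setminus Tx)=H_{*-r}(X/T,\,X/T\setminus\{\bar{x}\})\). On the other hand, equivariant Poincaré--Alexander--Lefschetz duality (Theorem~\ref{thm:PAL-A-B} with the closed \(T\)-pair \((A,B)=(Tx,\emptyset)\)) gives \(\hHT_{*}(X,X\setminus Tx)\cong\HTc^{n-*}(Tx;\kktilde)\); since \(Tx\) is compact this equals \(\HT^{n-*}(Tx;\kktilde)\), and since \(X\) is orientable its orientation cover restricts over~\(Tx\) to the trivial double cover, so twisted coefficients agree there with constant ones and this is \(\HT^{n-*}(Tx)\). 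Chaining the three identifications, \(H_{i}(X/T,\,X/T\setminus\{\bar{x}\})\cong\HT^{\,n-r-i}(Tx)\), which by the first paragraph is \(\kk\) for \(i=n-r\) and \(0\) otherwise. This is exactly~\eqref{eq:def-homology-mf} with \(\dim(X/T)=n-r\). (The finiteness hypotheses needed to invoke the two propositions for the pair \((X,X\setminus Tx)\) are in force: a \(T\)-invariant tubular neighbourhood of~\(Tx\) and a Mayer--Vietoris argument reduce them to the standing Assumption~\ref{ass:4:finite-homologymf}.)

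For orientability I would use Proposition~\ref{thm:locally-free-action} once more, in its version for homology with closed supports and again with \(K=T\): it gives \(\hHc_{n-r}(X/T)=\hHTc_{n}(X)\). By equivariant Poincaré duality (Proposition~\ref{thm:PD-noncompact}, whose twisted coefficients collapse to constant ones since \(X\) is orientable), \(\hHTc_{n}(X)\cong\HT^{0}(X)=H^{0}(X_{T})\), which is \(\kk\) because \(X_{T}\) is connected. Hence \(\hHc_{n-r}(X/T)\cong\kk\), as needed. I expect the only delicate points to be the bookkeeping with twisted coefficients---which reduces to the constant case precisely because the \emph{ambient} space~\(X\), not the orbit, is assumed orientable---and the input \(\HT^{*}(Tx)\cong\kk\) in degree~\(0\), where local freeness and \(\Char\kk=0\) are used; the rest is a diagram chase through Theorem~\ref{thm:PAL-A-B}, Proposition~\ref{thm:locally-free-action}, and Proposition~\ref{thm:PD-noncompact}.
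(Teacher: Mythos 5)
Your proof is correct and follows essentially the same route as the paper's: after invoking Remark~\ref{rem:quotient}, both arguments combine Proposition~\ref{thm:locally-free-action} (with \(K=T\)) and Poincaré--Alexander--Lefschetz duality (Theorem~\ref{thm:PAL-A-B}) for the pair \((Tx,\emptyset)\) to identify \(H_{i}(X/T,X/T\setminus\{\bar x\})\) with \(\HT^{n-r-i}(Tx)\cong\Hc^{n-r-i}(\{\bar x\})\), and both get orientability from the closed-supports version of Proposition~\ref{thm:locally-free-action} giving \(\hHc_{n-r}(X/T)\cong\hHTc_n(X)\). The only cosmetic difference is in the last step: the paper simply observes that the equivariant orientation \(o_T\in\hHTc_n(X)\) descends to a non-zero class in \(\hHc_{n-r}(X/T)\) (which suffices once one knows \(X/T\) is a homology manifold), whereas you compute \(\hHTc_n(X)\cong\HT^{0}(X)\cong\kk\) outright via Proposition~\ref{thm:PD-noncompact} --- both are fine, and your explicit flagging of where \(\Char\kk=0\) is needed (for ``locally free'' rather than ``free'') is a reasonable clarification the paper leaves implicit.
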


\begin{proof}
  As discussed in Remark~\ref{rem:quotient}, 
  \(X/T\) satisfies our assumption on spaces, and it is connected since \(X\) is.

  To verify condition~\eqref{eq:def-homology-mf},
  take an~\(x\in X\) with image~\(\bar x\in \bar X=X/T\).
  By Proposition~\ref{thm:locally-free-action}
  and Poincaré--Alexander--Lefschetz duality
  for the \(T\)-pair~\((Tx,\emptyset)\) in~\(X\), we have
  \begin{align}
    H_{i}(\bar X,\bar X\setminus\{\bar x\})
    &=\hHT_{i+r}(X,X\setminus Tx)
    \cong \HTc^{n-r-i}(Tx) \\
    \notag
    &= \Hc^{n-r-i}(\{\bar x\})
    = \begin{cases}
      \kk & \text{if \(i=n-r\),} \\
      0 & \text{otherwise.}
    \end{cases}
  \end{align}

  Again by Proposition~\ref{thm:locally-free-action},
  the equivariant orientation~\(o_{T}\in\hHTc_{n}(X)\) descends to
  a non-zero element in~\(\hHc_{n-r}(X/T)\).
  Hence \(X/T\) is orientable.
\end{proof}

\begin{example}
  A simple example shows why orientability is needed in Corollary~\ref{thm:quotient-homology-mf}
  above. Let \(X\) be the open Möbius band with its standard locally free
  action of~\(T=S^{1}\). Then \(X/T\) is a half-open interval, and so it is
  not a (homology) manifold, but rather a manifold with boundary. The quotient~\(\tilde X/T\)
  of the orientation cover looks like the letter~``V'' with its vertex
  corresponding to the end point of the interval, which in turn
  corresponds to the middle circle, the only non-free orbit.
\end{example}

\begin{remark}
  \label{rem:locally-free}
  Let \((A,B)\) be a closed \(T\)-pair in~\(X\).
  In Proposition~\ref{thm:locally-free-action} we established
  an isomorphism of \(H^{*}(BL)\)-modules
  \begin{equation}
    \label{eq:locally-free-action-pal}
    \hHT_{*}(A,B) = H^{L}_{*-p}(A/K,B/K)
  \end{equation}
  whenever a subtorus~\(K\subset T\) of rank~\(p\) and with quotient~\(L=T/K\)
  acts freely on~\(A\setminus B\);
  a locally free action was sufficient in case~\(\Char\kk=0\). 
  In the context of orientable homology manifolds, we can now understand this isomorphism
  in terms of Poincaré--Alexander--Lefschetz duality:

  Assume that \(K\) acts freely (or just locally freely if \(\Char\kk=0\)) on the orientable homology manifold~\(X\),
  so that \(X/K\) is again an orientable homology manifold by Corollary~\ref{thm:quotient-homology-mf}.
  Let \(n=\dim X=\dim X/K+p\).
  Using the cohomological part of Proposition~\ref{thm:locally-free-action}
  and Poincaré--Alexander--Lefschetz duality, we get
  \begin{multline}
    \hHT_{*}(A,B) = \HTc^{n-*}(X\setminus B,X\setminus A) \\
    = H_{L,c}^{n-*}((X\setminus B)/K,(X\setminus A)/K)
    = H^{L}_{*-p}(A/K,B/K).
  \end{multline}
  Hence the isomorphism~\eqref{eq:locally-free-action-pal} can be interpreted
  as a push-forward map or integration over the fibre in this setting.
\end{remark}

\subsection{Thom isomorphism}

As in the non-equivariant case, the Thom isomorphism
is a consequence of Poincaré and Poincaré--Alexander--Lefschetz duality,
\cf~\cite[\S VIII.7, \S VIII.11]{Dold:1980}.
In fact, one can use our version of equivariant duality
to define also Gysin homomorphisms (push forwards), indices, Euler classes
etc.\ in the equivariant setting and to prove their main properties
(\cf~\cite[Sec.~5.3]{AlldayPuppe:1993}) for cohomology with different supports.
The use of the Cartan model even provides a more functorial approach
than the minimal Hirsch--Brown model used in~\cite{AlldayPuppe:1993}.
Here we only develop the theory as far as needed for our applications in Section~\ref{applications-homology-mf}.

We continue to assume that \(X\) is an \(n\)-dimensional 
\(\kk \)-homology manifold with a \(T\)-action.

\begin{proposition}
  \label{thm:thom-iso}
  $ $
  \begin{enumerate}
  \item Let \(Y\subset X\)
    be a closed \(T\)-stable \(\kk\)-homology manifold of dimension~\(m\).
    Suppose that the orientation cover of~\(X\) restricts to the orientation cover of~\(Y\).
    Then there is an isomorphism of \(R\)-modules
    \begin{equation*}
      \HT^{*}(X,X\setminus Y) \cong \HT^{*}(Y)
    \end{equation*}
    of degree~\(m-n\).
  \item Assume \(\Char\kk=0\), and let \(K\subset T\) be a subtorus. Then there is an isomorphism of \(R\)-modules
    \begin{equation*}
      \HT^{*}(X,X\setminus X^{K}) \cong \HT^{*}(X^{K}).
    \end{equation*}
    This isomorphism has degree~\(m-n\) if all components of~\(X^{K}\)
    are of dimension~\(m\); in general it only preserves degrees mod~\(2\).
  \end{enumerate}
\end{proposition}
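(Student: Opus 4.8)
The plan is to obtain both isomorphisms as composites of duality isomorphisms already at our disposal, namely equivariant Poincaré--Alexander--Lefschetz duality (Theorem~\ref{thm:PAL-A-B}) and equivariant Poincaré duality for homology manifolds (Proposition~\ref{thm:PD-noncompact}).

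For part~(1) I would first apply the form of Theorem~\ref{thm:PAL-A-B} in which homology and cohomology are interchanged and all arrows reversed, to the closed $T$-pair~$(Y,\emptyset)$ in~$X$. This yields an isomorphism of $R$-modules
\[
  \HT^{*}(X,X\setminus Y)\;\cong\;\hHTc_{*}(Y;\kktilde),
\]
where $\kktilde$ denotes the twisted coefficient system on~$Y$ induced by the orientation cover of~$X$ restricted to~$Y$. (If $Y$ is not locally contractible, one reads cohomology here as built from Alexander--Spanier cochains, \cf~Remark~\ref{rem:4:loc-contractible}.) By hypothesis this restricted cover is the orientation cover of~$Y$, so $\kktilde$ is the coefficient system intrinsic to~$Y$. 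Next I would invoke Poincaré duality on the $m$-dimensional $\kk$-homology manifold~$Y$ (Proposition~\ref{thm:PD-noncompact}, which also covers the non-orientable case through twisted coefficients): this gives $\hHTc_{*}(Y;\kktilde)\cong\HT^{m-*}(Y)$, now with constant coefficients. Composing the two isomorphisms gives $\HT^{*}(X,X\setminus Y)\cong\HT^{*}(Y)$; the first duality isomorphism shifts degrees by~$n$ and the second by~$m$, and these combine to the asserted degree~$m-n$.

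For part~(2) I would run essentially the same argument with~$X^{K}$ in place of~$Y$, the extra content being that $X^{K}$ satisfies the hypotheses of part~(1) on each component when $\Char\kk=0$. The reversed form of Theorem~\ref{thm:PAL-A-B}, applied to the closed $T$-subset~$X^{K}$ of~$X$, gives $\HT^{*}(X,X\setminus X^{K})\cong\hHTc_{*}(X^{K};\kktilde)$, with $\kktilde$ the restriction of the orientation cover of~$X$. In characteristic~$0$ each component~$F$ of~$X^{K}$ is a $\kk$-homology manifold, by the classical local structure of torus actions on $\kk$-homology manifolds (see, e.g., \cite{Bredon:1972} and \cite{AlldayPuppe:1993}); the key local observation is that the part of the local model on which $K$ acts without nonzero fixed vectors is a complex $K$-representation, hence of even real dimension and canonically $K$-oriented. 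Two consequences follow: the codimension $n-\dim F$ is even, and the orientation cover of~$X$ restricts to the orientation cover of each~$F$. Applying Poincaré duality on each~$F$, exactly as in part~(1), identifies $\hHTc_{*}(X^{K};\kktilde)$ with $\HT^{*}(X^{K})$, the summand indexed by~$F$ carrying a degree shift $\dim F-n$. All these shifts are even, which yields the mod~$2$ statement in general, and they coincide with~$m-n$ precisely when every component of~$X^{K}$ has dimension~$m$.

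The main obstacle is the local-structure input needed in part~(2). For a smooth action it amounts to the slice theorem together with the representation theory of tori, but for a $\kk$-homology manifold one must instead rely on the known facts that $X^{K}$ is again a $\kk$-homology manifold, that each of its components has even codimension in~$X$, and that local orientations of~$X$ restrict to local orientations of~$X^{K}$. Once these are granted, everything else is a formal composition of duality isomorphisms plus bookkeeping of degree shifts and of which twisted coefficient system appears where.
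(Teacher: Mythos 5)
Your part~(1) is essentially identical to the paper's own argument: Poincaré--Alexander--Lefschetz duality for the closed pair~$(Y,\emptyset)$ gives $\HT^{*}(X,X\setminus Y)\cong\hHTc_{*}(Y;\kktilde)$ with $\kktilde$ the restriction of the orientation cover of~$X$, and then equivariant Poincaré duality on~$Y$ (Proposition~\ref{thm:PD-noncompact}) converts this to $\HT^{*}(Y)$, using the hypothesis that the two relevant orientation covers agree. The degree bookkeeping is the same.

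In part~(2), however, there is a genuine gap. You reduce, correctly, to showing that each component~$F$ of~$X^{K}$ is a $\kk$-homology manifold of even codimension and that the orientation cover of~$X$ restricts to the orientation cover of~$F$. But you justify these via ``the classical local structure of torus actions on $\kk$-homology manifolds'' with a local model given by a complex $K$-representation. That is a slice-theorem argument which is available for smooth actions (or with some care for topological manifolds), not for $\kk$-homology manifolds: there is no reason for the normal data at a point of~$X^{K}$ to be a linear representation when $X$ is merely a homology manifold, so the ``even-dimensional, canonically oriented complex slice'' does not exist in this generality. The paper avoids exactly this issue. Even codimension is cited from the Conner--Floyd theorem (a cohomological result, not a local-structure result), and the orientation-cover restriction claim is proved separately as Lemma~\ref{thm:orientation-cover-XT} by a global argument: naturality of the cap product under $\iota\colon X^{T}\hookrightarrow X$, localization at $S=R\setminus\m$ to lift an equivariant orientation of~$\tilde X$ back to~$\tilde Z=(\tilde X)^{T}$, and then extracting a nonzero class in $\hHc_{m}(\tilde Y)_{-}$ from degree reasons. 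The paper even remarks that the smooth case of this lemma is the normal-bundle orientability fact you are alluding to, precisely to emphasize that in the homology-manifold setting a different proof is required. Your hedge at the end acknowledges the obstacle but leaves the orientation-cover restriction as an unproved ``known fact''; in the generality of $\kk$-homology manifolds it is not known prior to Lemma~\ref{thm:orientation-cover-XT}, and supplying its proof is the real content of part~(2).
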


\begin{proof}
  We start with the first case.
  By Poincaré--Alexander--Lefschetz duality for the pair~\((X,Y)\) and
  Poincaré duality for~\(Y\) we have isomorphisms of \(R\)-modules
  \begin{equation}
    \HT^{*}(X,X\setminus Y) \cong \hHTc_{*}(Y,\kktilde) \cong \HT^{*}(Y),
  \end{equation}
  whose composition has degree~\(m-n\).
  Note that for the first isomorphism \(\hHTc_{*}(Y,\kktilde)\)
  is defined via the restriction of the orientation cover of~\(X\),
  and via the orientation cover for~\(Y\) in the second isomorphism.
  By assumption, these two covers coincide.

  We now consider the fixed point set~\(X^{K}\).
  It has finitely many components, say \(Y_{1}\),~\ldots,~\(Y_{k}\),
  which are \(\kk\)-homology manifolds whose dimensions are congruent to~\(n\) mod~\(2\)
  by a result of Conner and Floyd~\cite[Thm.~V.3.2]{Borel:1960}.
  By excision we have
  \begin{equation}
    \HT^{*}(X,X\setminus X^{K}) = \bigoplus_{i} \HT^{*}(X,X\setminus Y_{i}).
  \end{equation}
  The claim follows once we know
  that the restriction of an orientation cover for~\(X\)
  to each~\(Y_{i}\) is an orientation cover for that component.
  This is the content of the following lemma. 
\end{proof}

\begin{lemma}
  \label{thm:orientation-cover-XT}
  Assume \(\Char\kk=0\).
  Then the restriction of an orientation cover for~\(X\)
  to any component~\(Y\) of~\(X^{T}\) is an orientation cover for~\(Y\).
\end{lemma}

Note that each component~\(Y\) is orientable if and only if its orientation cover is trivial.
According to the theorem of Conner and Floyd mentioned previously, each component~\(Y\) of~\(X^{T}\) is orientable if so is \(X\).
Lemma~\ref{thm:orientation-cover-XT} can therefore be seen as a generalization of this part of their result.
Also note that for a smooth \(T\)-manifold~\(X\) Lemma~\ref{thm:orientation-cover-XT}
is a consequence of the fact that the normal bundle of each component~\(Y\)
of~\(X^{T}\) is orientable, \cf~\cite[Cor.~2]{Duflot:1983}:
By excision one can restrict from~\(X\) to a \(T\)-stable tubular neighbourhood of~\(Y\),
and, like the normal bundle, this neighbourhood is orientable if and only if \(Y\) is.

\begin{proof}
  Let \(\tilde X\to X\) be an orientation cover for~\(X\) and \(\tilde Z\to Z\)
  its restriction to~\(Z=X^{T}\). Note that \(\tilde Z=(\tilde X)^{T}\).
  For each component~\(Y\) of~\(Z\), say of dimension~\(m\), let \(\tilde Y\to Y\) be the further restriction.
  We will show \(\hHc_{m}(\tilde Y)_{-}\ne0\), which proves that \(\tilde Y\to Y\)
  is an orientation cover of~\(Y\): If \(Y\) is orientable, this condition ensures that \(\tilde Y\)
  is disconnected, and if \(Y\) is non-orientable, it shows that \(\tilde Y\) is orientable.
  
  Since the cap product~\eqref{eq:cap-product-twisted-c} is natural with respect to proper maps of spaces,
  we get a commutative diagram
  \begin{equation}
    \begin{tikzcd}
      \HTc^{*}(\tilde X)_{-} \arrow{r}{\cap\,\iota_{*}(b)} \arrow{d}[left]{\iota^{*}} & \hHT_{*}(\tilde X)_{+} \\
      \HTc^{*}(\tilde Z)_{-} \arrow{r}{\cap\,b} & \hHT_{*}(\tilde Z)_{+}\mathrlap{,}  \arrow{u}[right]{\iota_{*}}
    \end{tikzcd}
  \end{equation}
  where \(\iota\colon\tilde Z\hookrightarrow\tilde X\) and \(b\in\hHTc_{*}(\tilde Z)_{-}\).
  Note that \(\iota_{*}\colon \hHc_{*}(\tilde Z)\to\hHc_{*}(\tilde X)\)
  commutes with the involution~\(\tau\) and therefore preserves the \(\pm1\)~eigenspaces.
  Let \(S\subset R\) be the multiplicative subset of homogeneous polynomials of positive degree.
  We localize the diagram at~\(S\) and choose \(b\) to be a preimage of~\(o_{T}\in S^{-1}\hHT_{*}(\tilde X)_{-}\),
  which is possible by the localization theorem in equivariant homology (Proposition~\ref{thm:localization-thm-homology},
  here for homology with closed supports).
  By the same result and equivariant Poincaré duality, this
  turns the top and vertical arrows into isomorphisms, hence also the bottom arrow.

  Now \(b\in S^{-1}\hHTc_{*}(\tilde Z)_{-}\) is a sum of elements, one for each component of~\(Z=X^{T}\).
  The summand~\(b^{Y}\) corresponding to the component~\(Y\)
  can be written in the form
  \begin{equation}
    b^{Y} = b^{Y}_{m} + \dots + b^{Y}_{0}
    \in S^{-1}\hHTc_{*}(\tilde Y)_{-} = \hHc_{*}(\tilde Y)_{-}\otimes S^{-1}R
  \end{equation}
  for some~\(b^{Y}_{i}\in \hHc_{i}(\tilde Y)_{-}\otimes S^{-1}R\).
  A cap product \(\alpha\cap c\) with \(\alpha\in H^{m}(\tilde Y)\)
  and \(c\in \hHc_{i}(\tilde Y)\) vanishes unless \(i = m\).
  Because capping with~\(b^{Y}\) is an isomorphism,
  we conclude that \(b^{Y}_{m}\ne0\), hence~\(\hHc_{m}(Y)_{-}\ne0\).
\end{proof}

\section{Applications to the orbit structure}
\label{sec:PAL-applications}

We assume throughout the rest of this paper that
\(X\) is a \(T\)-space and that
the characteristic of the field~\(\kk\) is \(0\).
Recall that the orbit filtration~\((X_{i})\) has been defined in the introduction.

\subsection{\texorpdfstring{\boldmath{General \(T\)-spaces}}{General T-spaces}}

In Sections \ref*{sec:main-result},~\ref*{sec:applications}.1 and~\ref*{sec:partial-exactness}
of~\cite{AlldayFranzPuppe} we established results about the
equivariant cohomology with closed supports and
equivariant homology with compact supports
of the orbit filtration of a \(T\)-space~\(X\).
All these results have analogues for the other pair of supports,
\ie, for cohomology with compact supports and homology with closed supports.
Moreover, for a \(\kk\)-homology manifold~\(X\), one has another set of
analogous results for (co)homology with twisted coefficients.
The proofs for the new cases are usually identical to the ones given in~\cite{AlldayFranzPuppe}.
In the case of twisted coefficients, one may alternatively derive them
from the decompositions~\eqref{eq:splitting-HT} and~\eqref{eq:splitting-hHT}
and the untwisted result for an orientation cover;
see Proposition~\ref{thm:stratum-cm-c} below for an example.
We therefore content ourselves by stating the most important results in a more general setting.
All results in this section
are equally valid for the other pair of supports.

We simplify notation in the following way: 
For a \(T\)-pair~\((A,B)\) in a homology manifold~\(X\) we write \(\HT^{*}(A,B;\AA)\) to denote
either cohomology with constant coefficients (\(\AA=\kk\)) or
with twisted coefficients (\(\AA=\kktilde\)).
The same applies to homology and (co)chain complexes.
If \(X\) is not a homology manifold, then \(\AA\) always means constant coefficients.

\begin{proposition}
  \label{thm:stratum-cm-c}
  The \(R\)-modules \(\HT^{*}(X_{i},X_{i-1};\AA)\)~and~\(\hHT_{*}(X_{i},X_{i-1};\AA)\) are zero or
  Cohen--Macaulay of dimension~\(r-i\) for~\(0\le i\le r\). 
\end{proposition}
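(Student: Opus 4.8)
The plan is to reduce everything to the fixed-point locus via the localization theorem. Since $\Char\kk = 0$, set $S \subset R$ to be the multiplicative set of all nonzero homogeneous polynomials, so that $X^S = X^T$. The key point is that for the orbit-filtration stratum $(X_i, X_{i-1})$, the relative subset $X_i \setminus X_{i-1}$ consists of orbits of dimension exactly $i$; in particular $(X_i \setminus X_{i-1})^T = \emptyset$ as soon as $i \ge 1$, while for $i = 0$ the space $X_0 \setminus X_{-1} = X^T$ is the fixed locus itself. So the first step is: by the localization theorem (Proposition~\ref{thm:localization-thm-homology}, which by Remark~\ref{rem:twisted-properties} also holds with coefficients $\AA$), $S^{-1}\HT^*(X_i, X_{i-1}; \AA) = 0$ for $i \ge 1$ and similarly for homology, whereas for $i = 0$ localization is an isomorphism onto $S^{-1}\HT^*(X^T; \AA)$.

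The second step handles the dimension/depth bookkeeping. The vanishing of the localization at $S$ means that every element of $\HT^*(X_i, X_{i-1}; \AA)$ is annihilated by some nonzero polynomial, hence the module is supported on a proper closed subset of $\operatorname{Spec} R$; more precisely one wants $\dim \le r - i$. For this I would pass to a subtorus: near a point $x \in X_i \setminus X_{i-1}$ the orbit $Tx$ has dimension $i$, so the identity component $K'_x$ of the isotropy group has rank $r - i$. Using that only finitely many isotropy groups occur, one can further filter $X_i \setminus X_{i-1}$ by (connected) orbit type and, on each piece where a subtorus $K$ of rank $r-i$ acts locally freely, apply Proposition~\ref{thm:locally-free-action}: this identifies the equivariant (co)homology with $H^*_L$ or $H^L_*$ of the $K$-quotient, which is a module over $R_L = H^*(BL)$, a polynomial ring in $r - i$ variables, and is \emph{finitely generated} over it. Thus each stratum-of-strata has equivariant cohomology that is a finitely generated $R_L$-module, hence of dimension $\le r - i$ over $R$; combining via the long exact sequences of the finite filtration gives $\dim \HT^*(X_i, X_{i-1}; \AA) \le r - i$.

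The third and deepest step is the Cohen--Macaulay property, i.e.\ showing $\depth \ge r - i$ (so that depth equals dimension, given the bound just obtained). Here I would follow the strategy behind Theorem~\ref{thm:dim-depth-ext} / the Atiyah--Bredon circle of ideas: depth is detected by vanishing of local cohomology $\Hm^j$ below $r - i$, equivalently (by local duality, eq.~\eqref{eq:4:local-duality}) by vanishing of $\Ext_R^{>i}$ against $R$; but the cleanest route in this setup is the Duflot-style argument --- choose a subcircle acting almost freely, and induct on $r$, using that $\HT^*(X_i, X_{i-1}; \AA)$ becomes, after restricting to the circle, a free module over the polynomial generator in degree $2$ coming from that circle (this is where $X_i \setminus X_{i-1}$ having all orbits of dimension $\ge i \ge 1$ is essential, so the circle acts locally freely on the whole stratum). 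Concretely: pick a circle $S^1 \subset T$ acting locally freely on $X_i \setminus X_{i-1}$, let $t$ be its Euler class; then $\HT^*(X_i, X_{i-1};\AA)$ is a free $\kk[t]$-module by Proposition~\ref{thm:locally-free-action} applied to $K = S^1$ (the quotient is a $\kk[t]$-point of the base), and passing to $T/S^1$ and inducting on rank gives depth $\ge 1 + (r - 1 - (i-1)) = r - i$.

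For the twisted case I would not redo any of this: by the splittings \eqref{eq:splitting-HT}--\eqref{eq:splitting-hHT}, $\HT^*(X_i, X_{i-1}; \kktilde)$ is a direct summand of $\HT^*(\tilde X_i, \tilde X_{i-1})$ for the orientation cover $\tilde X$, whose orbit filtration pulls back from that of $X$; a direct summand of a Cohen--Macaulay module of dimension $d$ is again zero or Cohen--Macaulay of dimension $d$ (Cohen--Macaulayness of a given dimension is inherited by direct summands, since both the dimension bound and the $\Ext$-vanishing characterization pass to summands). The main obstacle is squarely the depth estimate in step three: getting the free-over-$\kk[t]$ statement uniformly across the whole stratum $X_i \setminus X_{i-1}$ (not just stratum-by-stratum) and correctly setting up the induction on $r$ so that the filtration arguments from step two do not destroy the depth lower bound --- this is exactly the delicate point that the spectral-sequence machinery of \cite{AlldayFranzPuppe} was built to handle, and one should cite Theorem~\ref{thm:stratum-cm} (its constant-coefficient, closed-supports prototype) and observe the proof is verbatim the same.
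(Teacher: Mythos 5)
Your endgame matches the paper: for the other pair of supports the argument is identical, and for twisted coefficients you invoke the splittings \eqref{eq:splitting-HT}--\eqref{eq:splitting-hHT} together with the fact that a non-zero direct summand of a Cohen--Macaulay module of dimension \(d\) is again Cohen--Macaulay of dimension \(d\) --- exactly the paper's reduction. Your last sentence, citing \citeorbitsone{Prop.~\ref*{thm:stratum-cm}} and observing the proof is verbatim the same, is essentially what the paper does, so the high-level reduction is correct.

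However, your attempted re-derivation of the depth bound in step three contains a concrete error in which direction the freeness goes. You pick a circle \(S^{1}\subset T\) acting locally freely on \(X_{i}\setminus X_{i-1}\), let \(t\) be the degree-\(2\) generator of \(H^{*}(BS^{1})\) embedded in \(R\) via a splitting, and claim that Proposition~\ref{thm:locally-free-action} with \(K=S^{1}\) makes \(\HT^{*}(X_{i},X_{i-1};\AA)\) a \emph{free} \(\kk[t]\)-module. This is backwards. Proposition~\ref{thm:locally-free-action} gives \(\HT^{*}(X_{i},X_{i-1};\AA)\cong H_{L}^{*}((X_{i},X_{i-1})/S^{1};\AA)\) with \(L=T/S^{1}\), an isomorphism of \(R_{L}\)-modules; combined with the localization theorem one sees that \(t\) acts nilpotently on this module, so it is \(\kk[t]\)-\emph{torsion}, not free. (Already for \(T=(S^{1})^{2}\) acting on \(X=S^{1}\) by the first coordinate one has \(\HT^{*}(X)=\kk[t_{2}]\), killed by \(t_{1}\).) The Atiyah-style argument goes the other way: on each (connected) orbit-type piece the isotropy identity component \(K'\) of rank \(r-i\) acts \emph{trivially}, and Proposition~\ref{thm:action-trivial} together with Proposition~\ref{thm:locally-free-action} for a complementary \(L'\cong T/K'\) identify \(\HT^{*}\) there with a finitely generated \emph{free} module over \(R_{K'}=H^{*}(BK')\), a polynomial ring in \(r-i\) variables. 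That freeness is the source of \(\depth\ge r-i\), and the pieces are assembled with the short exact sequences of the orbit-type filtration using the vanishing lemma \citeorbitsone{Lemma~\ref*{thm:CM-map-0}}. Your step~one (localization at all non-zero homogeneous polynomials) only detects the top stratum and does not by itself give \(\dim\le r-i\), so you rightly fall back on step~two for the dimension bound; but step~three as written would not produce the depth estimate.
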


\begin{proof}
  The version for constant coefficients and the usual pair of supports
  is proved in~\citeorbitsone{Prop.~\ref*{thm:stratum-cm}},
  following the ideas of~\cite[Sec.~7]{Atiyah:1974}.
  The proof for the other pair of supports is identical.
  The case of twisted coefficients follows
  from the untwisted version for an orientation cover
  and the observation that a non-zero direct summand
  of a Cohen--Macaulay module is again Cohen--Macaulay of the same dimension.
\end{proof}

\begin{corollary}
  \label{thm:hHT-orbit-degeneration-c}
  The spectral sequence associated with the orbit filtration of \(\hCT_{*}(X;\AA)\)
  and converging to~\(\hHT_{*}(X;\AA)\)
  degenerates at~\(E^{1}_{p}=\hHT_{*}(X_{p},X_{p-1};\AA)\).
\end{corollary}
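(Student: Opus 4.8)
The plan is to show, by induction on $p$, that every connecting homomorphism
\[
  \delta_p\colon\hHT_{*}(X_{p},X_{p-1};\AA)\longrightarrow\hHT_{*-1}(X_{p-1};\AA)
\]
in the long exact sequence of the pair $(X_{p},X_{p-1})$ vanishes. These maps are, up to composition with the canonical projections, precisely the differentials in the exact couple attached to the orbit filtration, so their collective vanishing is equivalent to the asserted degeneration at $E^{1}$; it also produces at once the short exact sequences $0\to\hHT_{*}(X_{p-1};\AA)\to\hHT_{*}(X_{p};\AA)\to\hHT_{*}(X_{p},X_{p-1};\AA)\to0$ of Proposition~\ref{thm:hHT-short-exact-intro}. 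The base case $p=0$ is vacuous since $X_{-1}=\emptyset$.

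For the inductive step, assume $\delta_{q}=0$ for all $q<p$. Splicing the resulting short exact sequences shows that $\hHT_{*}(X_{p-1};\AA)$ carries an $R$-module filtration whose successive quotients are the modules $\hHT_{*}(X_{q},X_{q-1};\AA)$ with $q\le p-1$, each of which is zero or Cohen--Macaulay of dimension $r-q$ by Proposition~\ref{thm:stratum-cm-c}. By Theorem~\ref{thm:dim-depth-ext} this forces $\Ext^{i}_{R}(\hHT_{*}(X_{q},X_{q-1};\AA),R)=0$ for $i\ne q$, and chasing the long exact $\Ext$-sequences up this finite filtration yields $\Ext^{i}_{R}(\hHT_{*}(X_{p-1};\AA),R)=0$ for all $i\ge p$. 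Hence $\hHT_{*}(X_{p-1};\AA)$, and therefore also $\hHT_{*-1}(X_{p-1};\AA)$, has depth at least $r-p+1$.

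Now examine $\operatorname{im}\delta_{p}$. On one hand it is a quotient of $\hHT_{*}(X_{p},X_{p-1};\AA)$, which is zero or Cohen--Macaulay of dimension $r-p$, so $\dim\operatorname{im}\delta_{p}\le r-p$. On the other hand it is a submodule of $\hHT_{*-1}(X_{p-1};\AA)$, and a nonzero submodule of a module of depth $\ge r-p+1$ has dimension $\ge r-p+1$ (because $\depth M\le\dim R/\mathfrak p$ for every $\mathfrak p\in\operatorname{Ass}M$, and $\operatorname{Ass}$ of a submodule is contained in $\operatorname{Ass}$ of the module; this is essentially Theorem~\ref{thm:CM-map-0}). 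These two estimates are incompatible unless $\operatorname{im}\delta_{p}=0$, which closes the induction. Letting $p$ run up to the top of the orbit filtration gives the corollary, and the very same argument applies to cohomology with compact supports, homology with closed supports, and the twisted variants, since Proposition~\ref{thm:stratum-cm-c} covers all of them.

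The one genuinely non-formal point is the depth estimate above, and this is exactly where Cohen--Macaulayness of the orbit-filtration quotients enters in an essential way: without it there would be nothing to prevent a nonzero connecting map $\delta_{p}$. Everything else --- that $\Ext$-vanishing propagates through a finite filtration, that the degree shift by $1$ does not affect depth or dimension, and the translation between vanishing connecting maps, splitting long exact sequences, and $E^{1}$-degeneration --- is routine.
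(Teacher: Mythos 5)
Your proof is correct, and it supplies a genuine argument where the paper itself only refers to the companion paper \citeorbitsone{Cor.~\ref*{thm:hHT-orbit-degeneration}} (whose proof, according to the introduction here, goes through ``intricate reasoning with spectral sequences''). Your route is in fact very close in spirit to the direct proofs that the present paper gives for Propositions~\ref{thm:hHT-short-exact} and~\ref{thm:Ext-i-j-0} --- the two results it states as consequences of this corollary but then reproves ``based only on the crucial Cohen--Macaulay property.'' The differences are cosmetic but worth noting: the paper's version of the degree/dimension clash runs a \emph{falling} induction and kills the connecting map $\hHT_{*}(X,X_{i};\AA)\to\hHT_{*-1}(X_{i},X_{i-1};\AA)$ by pitting a low-dimensional source against a Cohen--Macaulay target (via Lemma~\ref*{thm:CM-map-0} of the companion paper), whereas you run a \emph{rising} induction, kill $\delta_{p}\colon\hHT_{*}(X_{p},X_{p-1};\AA)\to\hHT_{*-1}(X_{p-1};\AA)$ by pitting a Cohen--Macaulay source against a high-depth target, and obtain the depth bound on $\hHT_{*}(X_{p-1};\AA)$ by propagating $\Ext$-vanishing along the filtration you have already split. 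Both are the same associated-prime argument seen from opposite ends of the short exact sequence. Your version has the minor advantage of producing the short exact sequences of Proposition~\ref{thm:hHT-short-exact} (for $i=-1$) and the $\Ext$-vanishing of Proposition~\ref{thm:Ext-i-j-0} as byproducts rather than as inputs.

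Two small points of hygiene. First, your reduction of $E^{1}$-degeneration to the vanishing of the $\delta_{p}$ is fine, but be aware that in the proof you only use the easy direction (vanishing of all $k_{1}=\delta_{p}$ forces $k_{r}=0$ for all $r$ in the derived couples, hence $d^{r}=j_{r}k_{r}=0$); the converse also holds for a bounded filtration, so ``equivalent'' is defensible, but it would be cleaner to state and use only the implication you need. Second, Theorem~\ref*{thm:dim-depth-ext} and Lemma~\ref*{thm:CM-map-0} live in the companion paper, so in the text of this paper they should be cited as \citeorbitsone{Thm.~\ref*{thm:dim-depth-ext}} and \citeorbitsone{Lemma~\ref*{thm:CM-map-0}} rather than as internal references.
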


\begin{proof}
  See \citeorbitsone{Cor.~\ref*{thm:hHT-orbit-degeneration}}.
\end{proof}

The following two results are immediate consequences of Corollary~\ref{thm:hHT-orbit-degeneration-c},
\cf~\citeorbitsone{Cor.~\ref*{thm:Ext-hHT-zero}}.
For the convenience of the reader, we provide proofs that are based only on
the crucial Cohen--Macaulay property identified in Proposition~\ref{thm:stratum-cm-c}.

\begin{proposition}
  \label{thm:hHT-short-exact}
  For any~\(-1\le i< j\le r\) there is a short exact sequence
  \begin{equation*}
    0 \longrightarrow \hHT_{*}(X_{j},X_{i};\AA)
    \longrightarrow \hHT_{*}(X,X_{i};\AA)
    \longrightarrow \hHT_{*}(X,X_{j};\AA)
    \longrightarrow 0.
  \end{equation*}
\end{proposition}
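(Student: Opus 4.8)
The plan is to deduce the short exact sequence from the Cohen--Macaulay property of Proposition~\ref{thm:stratum-cm-c} alone, as promised, rather than from the degeneration of Corollary~\ref{thm:hHT-orbit-degeneration-c}. The coefficients~$\AA$ stay fixed throughout, and I shall use that $X_{r}=X$ since every $T$-orbit has dimension at most~$r$.

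The first step is a dimension estimate: for every $-1\le a\le r$,
\[
  \dim_R\hHT_*(X,X_a;\AA)\le r-a-1 .
\]
To prove it, dualize the decreasing filtration of $\CT^*(X,X_a;\AA)=\CT^*(X_r,X_a;\AA)$ given by the kernels of the successive restriction maps $\CT^*(X_k,X_a;\AA)\to\CT^*(X_{k-1},X_a;\AA)$. This filtration splits over~$R$, because each $C^*(X_k,X_a;\AA)\to C^*(X_{k-1},X_a;\AA)$ splits over~$\kk$, so after applying $\Hom_R(-,R)$ one obtains a finite filtration of the complex $\hCT_*(X,X_a;\AA)$ with subquotients $\hCT_*(X_k,X_{k-1};\AA)$ for $a+1\le k\le r$. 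The associated spectral sequence has $E_1^{k}=\hHT_*(X_k,X_{k-1};\AA)$, and $\hHT_*(X,X_a;\AA)$ is built from subquotients of these modules; since the support of a module passes to its subquotients and each $\hHT_*(X_k,X_{k-1};\AA)$ is zero or Cohen--Macaulay of dimension $r-k$, the estimate follows.

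The crucial step is the case $j=i+1$. In the long exact homology sequence of the triple $(X,X_{i+1},X_i)$ with coefficients~$\AA$, let $K$ be the kernel of the map $\hHT_*(X_{i+1},X_i;\AA)\to\hHT_*(X,X_i;\AA)$; it is the image of the connecting homomorphism out of $\hHT_{*+1}(X,X_{i+1};\AA)$, hence a quotient of that module, so $\dim_R K\le r-i-2$ by the estimate above. But $K$ is at the same time a submodule of $\hHT_*(X_{i+1},X_i;\AA)$, which is zero or Cohen--Macaulay of dimension $r-i-1$ and therefore unmixed; consequently every nonzero submodule of it has all associated primes of dimension $r-i-1$, and hence dimension exactly $r-i-1$. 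This contradicts $\dim_R K\le r-i-2$ unless $K=0$. So the connecting map vanishes and one gets the short exact sequence $0\to\hHT_*(X_{i+1},X_i;\AA)\to\hHT_*(X,X_i;\AA)\to\hHT_*(X,X_{i+1};\AA)\to 0$; in particular the right-hand map is surjective.

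For arbitrary $-1\le i<j\le r$ I would then observe that the map $\hHT_*(X,X_i;\AA)\to\hHT_*(X,X_j;\AA)$ occurring in the long exact sequence of the triple $(X,X_j,X_i)$ is the composite of the maps $\hHT_*(X,X_k;\AA)\to\hHT_*(X,X_{k+1};\AA)$ for $i\le k<j$, each surjective by the previous step; hence it is surjective, so by exactness the connecting map $\hHT_*(X,X_j;\AA)\to\hHT_{*-1}(X_j,X_i;\AA)$ vanishes and the long exact sequence of $(X,X_j,X_i)$ breaks into the asserted short exact sequence. The main obstacle is concentrated in the adjacent case $j=i+1$: that is where Cohen--Macaulayness is genuinely used, through unmixedness, and one cannot shortcut it by running the same dimension argument directly on $\hHT_*(X_j,X_i;\AA)$ for $j>i+1$, since that module is only an iterated extension of Cohen--Macaulay modules of different dimensions and need not itself be unmixed.
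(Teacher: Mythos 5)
Your proof is correct, and its central mechanism is the same as the paper's: the connecting homomorphism vanishes because its source has $R$-dimension at most $r-i-2$, while every nonzero submodule of the Cohen--Macaulay module $\hHT_*(X_{i+1},X_i;\AA)$ has dimension exactly $r-i-1$ (you re-derive this unmixedness step, which the paper cites as a lemma from~\cite{AlldayFranzPuppe}). The genuine difference lies in where the bound $\dim_R\hHT_*(X,X_a;\AA)\le r-a-1$ comes from. The paper proves Propositions~\ref{thm:hHT-short-exact} and~\ref{thm:Ext-i-j-0} together by a simultaneous falling induction on~$i$, so that the dimension estimate needed at step~$i$ is supplied by the inductive hypothesis, which in turn rests on the short exact sequences already established at later steps of the induction. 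You instead read the bound off directly from the orbit-filtration spectral sequence for~$\hCT_*(X,X_a;\AA)$, using only convergence (not degeneration) together with the fact that the dimension of a finitely filtered module is the maximum over its subquotients. This decouples the estimate from the exactness statement and avoids the two-claim induction, at the modest cost of spelling out the dualized filtration and checking that it splits over~$R$, which you do. Your closing remark---that for~$j>i+1$ unmixedness cannot be applied to~$\hHT_*(X_j,X_i;\AA)$ directly, so one must compose the adjacent surjections---is precisely the point at which the paper's proof also resorts to composition, so the two arguments coincide again there.
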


\begin{proposition}
  \label{thm:Ext-i-j-0}
  \(\Ext_{R}^{p}(\hHT_{*}(X_{j},X_{i};\AA),R) = 0\) for~\(p>j\) and~\(p\le i\).
  In other words, 
  \(\dim_{R}\hHT_{*}(X_{j},X_{i};\AA)\le r-i-1\)
  and \(\depth_{R}\hHT_{*}(X_{j},X_{i};\AA)\le r-j\).
\end{proposition}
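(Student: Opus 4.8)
The plan is to prove the $\Ext$-vanishing by induction on the difference $j-i$, using only the Cohen--Macaulay property of Proposition~\ref{thm:stratum-cm-c} together with the short exact sequences of Proposition~\ref{thm:hHT-short-exact}. (It would also follow immediately from the $E^{1}$-degeneration in Corollary~\ref{thm:hHT-orbit-degeneration-c}, which equips $\hHT_{*}(X_{j},X_{i};\AA)$ with a finite filtration whose subquotients $\hHT_{*}(X_{k},X_{k-1};\AA)$, $i<k\le j$, have $\Ext_{R}^{p}(-,R)$ concentrated in degree~$k$; but the point of this proof is to avoid the degeneration.) Throughout, all modules are finitely generated over~$R$ by Assumption~\ref{ass:4:finite} or~\ref{ass:4:finite-homologymf}, so dimension, depth and projective dimension behave as expected.

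For the base case $j=i+1$, Proposition~\ref{thm:stratum-cm-c} tells us that $\hHT_{*}(X_{i+1},X_{i};\AA)$ is zero or Cohen--Macaulay of dimension $r-(i+1)$, and in either case the characterization of Cohen--Macaulay $R$-modules contained in Theorem~\ref{thm:dim-depth-ext} yields $\Ext_{R}^{p}(\hHT_{*}(X_{i+1},X_{i};\AA),R)=0$ for all $p\neq i+1$, in particular for $p\le i$ and for $p>i+1=j$. For the inductive step, assume $j-i\ge 2$ and choose any $k$ with $i<k<j$. Applying Proposition~\ref{thm:hHT-short-exact} to the $T$-space $X_{j}$, whose orbit skeleta are $X_{0}\subset\dots\subset X_{j}$ --- and, in the twisted case $\AA=\kktilde$, first applying the untwisted statement to the $T$-space $\pi^{-1}(X_{j})\subset\tilde X$ and then extracting the $(-1)$-eigenspace of the deck transformation via the splitting~\eqref{eq:splitting-hHT} --- I obtain a short exact sequence of $R$-modules
\begin{equation*}
  0 \longrightarrow \hHT_{*}(X_{k},X_{i};\AA) \longrightarrow \hHT_{*}(X_{j},X_{i};\AA) \longrightarrow \hHT_{*}(X_{j},X_{k};\AA) \longrightarrow 0 .
\end{equation*}
By the inductive hypothesis applied to $(i,k)$ and to $(k,j)$, we have $\Ext_{R}^{p}(\hHT_{*}(X_{k},X_{i};\AA),R)=0$ for $p\le i$ and for $p>k$, and $\Ext_{R}^{p}(\hHT_{*}(X_{j},X_{k};\AA),R)=0$ for $p\le k$ and for $p>j$. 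The long exact $\Ext$-sequence of the above extension contains, for each $p$, the exact segment $\Ext_{R}^{p}(\hHT_{*}(X_{j},X_{k};\AA),R)\to\Ext_{R}^{p}(\hHT_{*}(X_{j},X_{i};\AA),R)\to\Ext_{R}^{p}(\hHT_{*}(X_{k},X_{i};\AA),R)$; for $p\le i$ both outer terms vanish (the right one since $p\le i$, the left one since $p\le i<k$), and for $p>j$ both again vanish (the left one since $p>j$, the right one since $p>j>k$). Hence $\Ext_{R}^{p}(\hHT_{*}(X_{j},X_{i};\AA),R)=0$ for $p\le i$ and for $p>j$, which closes the induction.

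Finally, by Theorem~\ref{thm:dim-depth-ext}, the vanishing for $p\le i$ says that the codimension of $\hHT_{*}(X_{j},X_{i};\AA)$ in $R$ is at least $i+1$, hence its dimension is at most $r-i-1$, while the vanishing for $p>j$ bounds its projective dimension by~$j$, which by Auslander--Buchsbaum gives the asserted estimate on its depth. I do not expect a serious obstacle here: the entire content sits in Propositions~\ref{thm:stratum-cm-c} and~\ref{thm:hHT-short-exact}, and the rest is bookkeeping. The two things to watch are keeping the exponent ranges of the two inductive hypotheses correctly aligned in the $\Ext$ long exact sequence --- which is why the induction is on $j-i$ and not on $j$ alone --- and, for twisted coefficients, running the short exact sequence of the inductive step on the orientation cover, exactly as in the rest of the paper.
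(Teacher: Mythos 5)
Your proof is correct, and it does take a genuinely different route from the paper's. The paper proves Propositions~\ref{thm:hHT-short-exact} and~\ref{thm:Ext-i-j-0} \emph{simultaneously} by a single falling induction on~\(i\): at each step it uses the dimension bound from Proposition~\ref{thm:Ext-i-j-0} at level~\(i\) (together with Lemma~\ref{thm:CM-map-0} of~\cite{AlldayFranzPuppe}) to kill the connecting homomorphism~\(\hHT_{*}(X,X_{i};\AA)\to\hHT_{*-1}(X_{i},X_{i-1};\AA)\), thereby establishing the short exact sequence of Proposition~\ref{thm:hHT-short-exact} at level~\(i-1\); it then splits off the stratum~\(\hHT_{*}(X_{i},X_{i-1};\AA)\), which is Cohen--Macaulay by Proposition~\ref{thm:stratum-cm-c}, and runs the long exact \(\Ext\)-sequence. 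You instead induct on~\(j-i\) and cut at an arbitrary intermediate~\(k\), taking Proposition~\ref{thm:hHT-short-exact} as a black box. Both inductive schemes close for the same reason (the disjointness of the \(\Ext\)-ranges of the two pieces), and your calculation is right. What the paper's simultaneous induction buys, and yours does not, is self-containedness: the paper's ``direct proof'' of Proposition~\ref{thm:hHT-short-exact} itself requires the dimension bound of Proposition~\ref{thm:Ext-i-j-0}, so the two propositions cannot be disentangled without appealing to the \(E_{1}\)-degeneration of Corollary~\ref{thm:hHT-orbit-degeneration-c}. Your argument thus derives Proposition~\ref{thm:Ext-i-j-0} from Proposition~\ref{thm:hHT-short-exact} and the Cohen--Macaulay property, but it does not, by itself, replace the joint induction as a degeneration-free proof of the whole package; to claim that, you would still need an independent proof of the short exact sequences. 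Your handling of twisted coefficients via the orientation cover, and the observation that the statement also drops out of the \(E_{1}\)-degeneration by filtering~\(\hHT_{*}(X_{j},X_{i};\AA)\) with Cohen--Macaulay subquotients, are both correct. One small point: the stated depth inequality in the proposition should read~\(\depth_{R}\hHT_{*}(X_{j},X_{i};\AA)\ge r-j\) (equivalently, projective dimension~\(\le j\)), which is what you in fact derive from Auslander--Buchsbaum.
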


\begin{proof}[Proof of Propositions~\ref{thm:hHT-short-exact} and~\ref{thm:Ext-i-j-0}]
  We prove both statements simultaneously by falling induction on~\(i\).
  For \(i=r\) there is nothing to show.

  Now assume both claims are true for a given~\(i\) and all~\(j\ge i\).
  By Proposition~\ref{thm:stratum-cm-c}, \(\hHT_{*}(X_{i},X_{i-1};\AA)\) is zero or Cohen--Macaulay of dimension~\(r-i\).
  Because \(\hHT_{*}(X,X_{i};\AA)\) is of dimension~\(\le r-i-1\) by induction,
  the connecting homomorphism
  \begin{equation}
    \hHT_{*}(X,X_{i};\AA) \to \hHT_{*-1}(X_{i},X_{i-1};\AA)
  \end{equation}
  is zero, \cf~\citeorbitsone{Lemma~\ref*{thm:CM-map-0}}, so that we get the short
  exact sequence
  \begin{equation}
    \label{eq:hHT-short-exact-1}
    0 \longrightarrow \hHT_{*}(X_{i},X_{i-1};\AA)
    \longrightarrow \hHT_{*}(X,X_{i-1};\AA)
    \longrightarrow \hHT_{*}(X,X_{i};\AA)
    \longrightarrow 0.
  \end{equation}
  By induction, the map~\(\hHT_{*}(X,X_{i};\AA)\to\hHT_{*}(X,X_{j};\AA)\)
  is surjective, hence so is the composition
  \begin{equation}
    \hHT_{*}(X,X_{i-1};\AA) \to \hHT_{*}(X,X_{i};\AA)\to\hHT_{*}(X,X_{j};\AA),
  \end{equation}
  which proves the first claim.
  Taking \(X=X_{j}\) in~\eqref{eq:hHT-short-exact-1}, we obtain
  \begin{equation}
    0 \longrightarrow \hHT_{*}(X_{i},X_{i-1};\AA)
    \longrightarrow \hHT_{*}(X_{j},X_{i-1};\AA)
    \longrightarrow \hHT_{*}(X_{j},X_{i};\AA)
    \longrightarrow 0.
  \end{equation}
  The second claim now follows by induction and the way \(\Ext\)~modules
  (or dimension and depth{\slash}projective dimension) behave with respect to
  short exact sequences.
\end{proof}

The spectral sequence for equivariant cohomology induced by the orbit filtration
does \emph{not} degenerate at the \(E_{1}\)~page in general.
Since this page of the spectral sequence
is of independent interest, we give it a name.

The \emph{non-augmented Atiyah--Bredon complex}~\(\AB^{*}(X;\AA)\)
with coefficients in~\(\AA\) is the complex of \(R\)-modules
defined by
\begin{equation}
 \AB^{i}(X;\AA)=\HT^{*+i}(X_{i}, X_{i-1};\AA) 
\end{equation}
for~\(0\le i\le r\)
and zero otherwise.
The differential
\begin{equation}
  d_{i}\colon \HT^{*}(X_{i}, X_{i-1};\AA)\to \HT^{*+1}(X_{i+1}, X_{i};\AA)
\end{equation}
is the connecting morphism in the long exact sequence of the triple~\((X_{i+1},X_{i},X_{i-1})\).
Note that \(\AB^{*}(X;\AA)\) is the \(E_{1}\)~page of the spectral sequence
arising from the orbit filtration of~\(X\) and converging to~\(\HT^{*}(X;\AA)\),
and its cohomology~\(H^{*}(\AB^{*}(X;\AA))\) is the \(E_{2}\)~page.

The \emph{augmented Atiyah--Bredon complex} 
is obtained by augmenting~\(\AB^{*}(X;\AA)\)
by~\(\AB^{-1}(X;\AA)=\HT^{*}(X;\AA)\) and the restriction to the fixed point set,
\begin{multline}
  \label{eq:4:atiyah-bredon}
  0
  \longrightarrow \HT^{*}(X;\AA)
  \longrightarrow \HT^{*}(X_0;\AA)
  \stackrel{d_{0}}\longrightarrow \HT^{*+1}(X_1, X_0;\AA)
  \stackrel{d_{1}}\longrightarrow \cdots \\ \cdots
  \stackrel{d_{r-1}}\longrightarrow \HT^{*+r}(X_r, X_{r-1};\AA)
  \longrightarrow 0.
\end{multline}

\begin{remark}
This sequence first appeared explicitly in the paper~\cite{Bredon:1974} of Bredon,
but it goes back to work of Atiyah~\cite[Sec.~7]{Atiyah:1974}.
In the context of equivariant \(K\)-theory,
Atiyah showed that the freeness of~\(K_{T}^{*}(X)\) implies that the sequence
\begin{equation}
 0 \to K_{T}^{*}(X,X_{i-1}) \to K_{T}^{*}(X_{i},X_{i-1}) \to K_{T}^{*}(X,X_{i}) \to 0
\end{equation}
is exact for all~\(i\) \cite[eq.~(7.3)]{Atiyah:1974}.
This in turn is equivalent to the exactness of the \(K\)-theoretic analogue of~\eqref{eq:4:atiyah-bredon},
\cf~\cite[Lemma~4.1]{FranzPuppe:2007}.
Atiyah actually considered representations only, but his arguments work for any \(T\)-space.
\end{remark}

It turns out that the cohomology of the non-augmented Atiyah--Bredon complex
is completely determined by~\(\hHT_{*}(X;\AA)\).

\goodbreak

\begin{theorem}
  \label{thm:exthab-ss-c}
  For any \(T\)-space~\(X\) 
  the following two spectral sequences converging to~\(\HT^{*}(X;\AA)\) are naturally isomorphic from the \(E_{2}\)~page on:
  \begin{enumerate}
  \item The one induced by the orbit filtration with~\(E_{1}^{p}=\HT^{*}(X_{p},X_{p-1};\AA)\),
  \item The universal coefficient spectral sequence with~\(E_{2}^{p}=\Ext_{R}^{p}(\hHT_{*}(X;\AAv),R)\).
  \end{enumerate}
\end{theorem}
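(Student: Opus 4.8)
The plan is to reduce to constant coefficients and the usual pair of supports, where the assertion is \citeorbitsone{Thm.~\ref*{thm:exthab-ss}}, and to recall the strategy behind that result. For the reduction: if $\Char\kk=2$ then twisted coefficients coincide with constant ones by definition, so assume $\Char\kk\ne2$ and let $\pi\colon\tilde X\to X$ be the orientation cover with non-trivial deck transformation~$\tau$. Since $T$ is connected and its action lifts to~$\tilde X$ commuting with~$\tau$, the orbit filtration of~$\tilde X$ is $\pi^{-1}$ of that of~$X$ and is $\tau$-stable, and $\hCT_*(\tilde X)=\Hom_R(\CT^*(\tilde X),R)$ carries a compatible $\tau$-action. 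Choosing the auxiliary resolutions entering the construction below $\tau$-equivariantly — possible because $2$ is invertible — both spectral sequences for~$\tilde X$ with constant coefficients are acted on compatibly by~$\tau$ and split into their $(+1)$- and $(-1)$-eigenparts; by the splittings~\eqref{eq:splitting-HT} and~\eqref{eq:splitting-hHT} the $(+1)$-eigenpart is the constant-coefficient statement for~$X$ and the $(-1)$-eigenpart the twisted one. The proof for the other pair of supports is identical. So it suffices to treat cohomology with closed supports and constant coefficients, which I do below, suppressing the coefficient.

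In that case the two spectral sequences are $\OO$, that of the orbit filtration $F^p\CT^*(X)=\CT^*(X,X_{p-1})$ of~$\CT^*(X)$, with $E_1^p=\HT^*(X_p,X_{p-1})$ and $E_2^p=H^p(\AB^*(X))$, and $\II$, the second universal coefficient spectral sequence of Proposition~\ref{thm:4:uct}, with $E_2^p=\Ext_R^p(\hHT_*(X),R)$; both converge to $\HT^*(X)=H^*(\CT^*(X))$. The plan is to interpolate a third spectral sequence~$\DD$ that carries simultaneously an orbit filtration and a resolution filtration, obtained by performing the universal coefficient construction for $\hCT_*(X)$ compatibly with the orbit filtration — using that the $R$-dual of the orbit filtration $F^\bullet\CT^*(X)$ is the increasing filtration of $\hCT_*(X)=\Hom_R(\CT^*(X),R)$ by the subcomplexes $\hCT_*(X_p)$, whose $p$-th associated graded piece is $\hCT_*(X_p,X_{p-1})$, and that the double-dualization map $\CT^*(X)\to\Hom_R(\hCT_*(X),R)$ is a homotopy equivalence over~$R$ (compare the proof of Theorem~\ref{thm:PAL-A-B}). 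This produces a zigzag of natural morphisms of spectral sequences connecting $\OO$ to~$\II$ through~$\DD$; since a morphism of spectral sequences that is an isomorphism on~$E_2$ is an isomorphism on every later page, it remains to verify that each of these morphisms is an isomorphism on~$E_2$.

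This last step is where Proposition~\ref{thm:stratum-cm-c} is used: each $\HT^*(X_p,X_{p-1})$ is zero or Cohen--Macaulay of dimension~$r-p$, hence of grade and projective dimension~$p$ over the polynomial ring~$R$, so that $\Ext_R^q(\HT^*(X_p,X_{p-1}),R)=0$ for $q\ne p$, and by Proposition~\ref{thm:4:uct} one has $\Ext_R^p(\HT^*(X_p,X_{p-1}),R)\cong\hHT_*(X_p,X_{p-1})$ up to a degree shift, with the dual identification holding as well. Together with the degeneration of the homological orbit-filtration spectral sequence (Corollary~\ref{thm:hHT-orbit-degeneration-c}) and the low-degree $\Ext$-vanishing of Proposition~\ref{thm:Ext-i-j-0}, this concentration of the relevant $\Ext$-modules on the associated graded pieces is exactly what makes the spectral sequences of~$\DD$ in the two filtration directions degenerate onto the pages that match $\OO$ and~$\II$ on~$E_2$. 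I expect the main obstacle to be the bookkeeping around~$\DD$: assembling the orbit-adapted resolution functorially enough that the comparison isomorphisms are natural in~$X$, and verifying that passing to $\Hom_R(-,R)$ interchanges the orbit filtration and the resolution filtration in the way required. The conceptual content, however, lies entirely in the Cohen--Macaulay property already established, and in the twisted and other-support cases nothing is needed beyond the reduction carried out in the first paragraph.
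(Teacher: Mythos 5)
Your proposal is correct and takes the same route as the paper: reduce to the constant-coefficient, closed-support case, which is \citeorbitsone{Thm.~\ref*{thm:exthab-ss}}, and for twisted coefficients use the $\pm1$-eigenspace splitting of the orbit-filtration and universal-coefficient spectral sequences of the orientation cover $\tilde X$. The paper's proof is just the citation plus a one-line remark to this effect; you additionally (and accurately, as far as your sketch can be checked) outline the interpolating bicomplex/spectral sequence $\DD$ and the role of the Cohen--Macaulay concentration from Proposition~\ref{thm:stratum-cm-c} that drives the cited argument.
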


\begin{proof}
  See \citeorbitsone{Thm.~\ref*{thm:exthab-ss}}.
  The version for twisted coefficients may again be derived from the untwisted
  result for an orientation cover.
\end{proof}

\begin{corollary}
  \label{thm:4:ext-hab}
    For any~\(i\ge0\) there is an isomorphism of \(R\)-modules
  \begin{equation*}
    H^{i}(\AB^{*}(X;\AA)) = \Ext_{R}^{i}(\hHT_{*}(X;\AA),R).
  \end{equation*}
\end{corollary}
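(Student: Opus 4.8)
The plan is to obtain this as an immediate consequence of Theorem~\ref{thm:exthab-ss-c}, by reading off the identification of the two $E_2$~pages appearing there. Recall that, by construction, the non-augmented Atiyah--Bredon complex $\AB^{*}(X;\AA)$ is the $E_1$~page of the spectral sequence induced by the orbit filtration of $\hCT_*$-type data and converging to $\HT^{*}(X;\AA)$, with its differential $d_{i}$ the connecting map of the triple $(X_{i+1},X_{i},X_{i-1})$; consequently the $E_2$~page of that spectral sequence is exactly the cohomology $H^{*}(\AB^{*}(X;\AA))$. On the other hand, the universal coefficient spectral sequence (Proposition~\ref{thm:4:uct}, extended to twisted coefficients via Remark~\ref{rem:twisted-properties}) also converges to $\HT^{*}(X;\AA)$ and has $E_2$~page $\Ext_{R}^{*}(\hHT_{*}(X;\AAv),R)$. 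Theorem~\ref{thm:exthab-ss-c} asserts that these two spectral sequences are naturally isomorphic from the $E_2$~page onward; restricting that isomorphism to the column of cohomological index $i$ (and summing over the internal degree so that the comparison is one of graded $R$-modules, not merely of $\kk$-vector spaces) gives precisely $H^{i}(\AB^{*}(X;\AA)) = \Ext_{R}^{i}(\hHT_{*}(X;\AA),R)$, as desired.

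Since essentially all the content is packaged into Theorem~\ref{thm:exthab-ss-c}, I do not expect a genuine obstacle. The one point deserving a word of care is to confirm that the isomorphism of $E_2$~pages provided by that theorem is compatible with the $R$-module structures: on the Atiyah--Bredon side the module structure on $H^{i}(\AB^{*}(X;\AA))$ comes from the $R$-action on the relative modules $\HT^{*}(X_{p},X_{p-1};\AA)$, while on the universal coefficient side it is the usual $\Ext_{R}$ module structure. This compatibility is already part of the naturality claim in Theorem~\ref{thm:exthab-ss-c} (whose proof is imported from \citeorbitsone{Thm.~\ref*{thm:exthab-ss}}), so nothing new has to be verified here; for twisted coefficients the same statement follows, as in the proof of Theorem~\ref{thm:exthab-ss-c}, by specializing to an orientation cover and using the decompositions~\eqref{eq:splitting-HT} and~\eqref{eq:splitting-hHT} together with the fact that a direct summand of a finitely generated $R$-module has the expected $\Ext$-modules as the corresponding summands.
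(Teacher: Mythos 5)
Your argument is correct, and it is exactly what the label ``Corollary'' invites: both sides are the $E_2^i$ terms of the two spectral sequences compared in Theorem~\ref{thm:exthab-ss-c}, so the natural isomorphism of $E_2$ pages asserted there is precisely the claimed identity of $R$-modules. Be aware, however, that this is not the proof the paper writes out: the introduction announces, and Section~\ref{sec:quick-proof} supplies, a direct proof that deliberately avoids Theorem~\ref{thm:exthab-ss-c} and the spectral-sequence machinery imported from~\cite{AlldayFranzPuppe} behind it. That argument works only with the commutative diagram~\eqref{eq:hHT-Xi-Xi1-diagram}, whose rows are exact by Proposition~\ref{thm:hHT-short-exact}, and with the $\Ext$-vanishing ranges of Proposition~\ref{thm:Ext-i-j-0}: a preliminary lemma identifies $H^{i}(\AB^{*}(X))$ with $\ker\delta_{i}/\im\delta_{i-1}$ for a connecting map $\delta_{i}$ on $\Ext$-modules coming from the bottom row of that diagram, and a further diagram chase (using the long exact $\Ext$-sequences of both rows) then matches this quotient with $\Ext_{R}^{i}(\hHT_{*}(X),R)$. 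Your route is the shorter of the two and also delivers the comparison of the full spectral sequences as a by-product; the paper's explicit proof is more self-contained, rests only on the Cohen--Macaulay property of Proposition~\ref{thm:stratum-cm-c} through the two propositions just cited, and serves as an independent check on the imported theorem. One small slip in your write-up: the $E_1$~page $\AB^{*}(X;\AA)$ comes from filtering $\CT^{*}(X;\AA)$ by the orbit filtration, not $\hCT_{*}$; this does not affect the argument.
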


In Section~\ref{sec:quick-proof} we will give a direct proof of this important result
that is not based on Theorem~\ref{thm:exthab-ss-c}.

\begin{theorem}
  \label{thm:4:conditions-partial-exactness}
  The following conditions are equivalent for any~\(0\le j\le r\):
  \begin{enumerate}
  \item \label{4:q1} The Atiyah--Bredon sequence~\eqref{eq:4:atiyah-bredon}
    is exact at all positions~\(-1\le i\le j-2\).
  \item \label{4:q4} The restriction map~\(\HT^{*}(X;\AA)\to H_{K}^{*}(X;\AA)\)
    is surjective for all subtori~\(K\) of~\(T\) of rank~\(r-j\).
  \item \label{4:q3} \(\HT^{*}(X;\AA)\) is free over all subrings~%
    \(H^{*}(BL)\subset H^{*}(BT)=R\), where \(L\) is a quotient of~\(T\) of rank~\(j\).
  \item \label{4:q2} \(\HT^{*}(X;\AA)\) is a \(j\)-th syzygy.
  \end{enumerate}
\end{theorem}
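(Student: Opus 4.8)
The plan is to establish the chain of implications
\ref{4:q2}~$\Rightarrow$~\ref{4:q3}~$\Rightarrow$~\ref{4:q4}~$\Rightarrow$~\ref{4:q1}~$\Rightarrow$~\ref{4:q2},
using the two main tools already available: Corollary~\ref{thm:4:ext-hab}, which identifies $H^{i}(\AB^{*}(X;\AA))$ with $\Ext_{R}^{i}(\hHT_{*}(X;\AA),R)$, and Proposition~\ref{thm:action-trivial} together with Proposition~\ref{thm:locally-free-action} to handle the restriction maps to subtori and the base-change to subrings $H^{*}(BL)$. Recall the homological characterization of syzygies (from Section~\ref{sec:algebra}): $M$ is a $j$-th syzygy module if and only if $\Ext_{R}^{i}(\Ext_{R}^{i'}(M,R),R)$ vanishes in an appropriate range, or equivalently, via the Auslander--Bridger-type criterion used in~\cite{AlldayFranzPuppe}, in terms of the vanishing of certain local cohomology; I would cite the precise statement from the algebraic background section rather than reprove it.

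First I would prove \ref{4:q1}~$\Leftrightarrow$~\ref{4:q2}. By Corollary~\ref{thm:4:ext-hab}, exactness of the augmented Atiyah--Bredon complex~\eqref{eq:4:atiyah-bredon} at position~$i$ for $-1\le i\le j-2$ is equivalent to $\Ext_{R}^{i}(\hHT_{*}(X;\AA),R)=0$ for $0\le i\le j-1$ (with the augmentation term accounting for the shift). By the purely algebraic syzygy criterion (the characterization of $j$-th syzygies in terms of the vanishing of low-degree $\Ext$ against $R$, i.e.\ $\depth$-type conditions dual to those in Proposition~\ref{thm:Ext-i-j-0}), this vanishing is exactly the statement that $\HT^{*}(X;\AA)$ — being $\Ext_{R}^{0}(\hHT_{*}(X;\AA),R)$ up to the universal coefficient identification, or more precisely the module whose $\Ext$-dual is $\hHT_{*}(X;\AA)$ — is a $j$-th syzygy. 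This step reduces to bookkeeping once the algebraic dictionary is in place.

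Next, \ref{4:q2}~$\Rightarrow$~\ref{4:q3}: a $j$-th syzygy $R$-module is free over every polynomial subring $H^{*}(BL)\subset R$ with $L$ of rank~$j$, because restriction of scalars along $R\supset H^{*}(BL)$ sends a $j$-th syzygy over $R$ (a ring of Krull dimension $r$) to a module of depth $\ge j$ over $H^{*}(BL)$ (dimension $j$), hence free by Auslander--Buchsbaum; one should also note $\HT^*(X;\AA)$ is finitely generated by Assumption~\ref{ass:4:finite}. For \ref{4:q3}~$\Rightarrow$~\ref{4:q4}: given a subtorus $K\subset T$ of rank $r-j$ with $L=T/K$ of rank $j$, the restriction $\HT^{*}(X;\AA)\to H_{K}^{*}(X;\AA)$ is, after identifying $H_{K}^{*}(X;\AA)=\HT^{*}(X;\AA)\otimes_{R}R_{K}=\HT^{*}(X;\AA)\otimes_{H^{*}(BL)}\kk$ via the EMSS or directly (since $\HT^{*}(X;\AA)$ is free over $H^{*}(BL)$, the EMSS collapses), the quotient map modulo the ideal generated by $H^{*}(BL)^{+}$; freeness over $H^{*}(BL)$ makes this surjective. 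Finally, \ref{4:q4}~$\Rightarrow$~\ref{4:q1}: this is the implication linking the geometry back to the complex, and I expect it to be the main obstacle — the idea is to run the orbit-filtration spectral sequence and show that surjectivity of all the restriction maps to rank-$(r-j)$ subtori forces the differentials $d_{-1},\dots,d_{j-2}$ out of the augmented complex to have the requisite exactness. One way is to localize: by the localization theorem (Proposition~\ref{thm:localization-thm-homology}), surjectivity onto $H_K^*$ for all such $K$ controls the support and the associated primes of the cohomology of $\AB^{*}(X;\AA)$, and combined with the Cohen--Macaulayness from Proposition~\ref{thm:stratum-cm-c} (which pins down the possible dimensions of the $E_2$-terms) one deduces the vanishing of $H^i(\AB^*(X;\AA))$ in the required range, equivalently the $\Ext$-vanishing, closing the cycle. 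The delicate point throughout is matching the indexing of the augmented versus non-augmented complex and the degree shifts; I would set conventions once at the start of the proof and then proceed implication by implication.
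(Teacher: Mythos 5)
Your proposal differs from the paper's in one structural respect that turns out to matter. The paper (citing the proof of~\citeorbitsone{Thm.~\ref*{thm:conditions-partial-exactness}}) closes the circle of implications by proving \eqref{4:q3}~$\Leftrightarrow$~\eqref{4:q4} directly by a Leray--Hirsch argument for the fibration \(X_{K}\to X_{T}\to BL\): surjectivity of \(\HT^{*}(X;\AA)\to H_{K}^{*}(X;\AA)\) is literally the Leray--Hirsch hypothesis, which yields freeness of \(\HT^{*}(X;\AA)\) over \(H^{*}(BL)\), and conversely freeness collapses the Eilenberg--Moore spectral sequence and gives surjectivity of the edge map. You prove \eqref{4:q3}~$\Rightarrow$~\eqref{4:q4} by precisely this EMSS argument, but then try to close the loop with \eqref{4:q4}~$\Rightarrow$~\eqref{4:q1} directly. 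You identify this as ``the main obstacle'' and offer only a plan (localize, control associated primes, invoke the Cohen--Macaulay property of the strata); that sketch is not a proof, and I do not see how it becomes one without in effect re-deriving Leray--Hirsch. The shortest honest route is the one the paper indicates: \eqref{4:q4}~$\Rightarrow$~\eqref{4:q3} by Leray--Hirsch, then \eqref{4:q3}~$\Leftrightarrow$~\eqref{4:q2} by the purely algebraic characterization of syzygies from~\citeorbitsone{Lemma~\ref*{thm:torsionfree}}, then \eqref{4:q2}~$\Leftrightarrow$~\eqref{4:q1} via Corollary~\ref{thm:4:ext-hab}.

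Two technical points in what you do write. First, the translation of \eqref{4:q1} into \(\Ext\)-vanishing is off: exactness of the augmented sequence~\eqref{eq:4:atiyah-bredon} at positions \(-1\le i\le j-2\) is not the same as \(\Ext_{R}^{i}(\hHT_{*}(X;\AA),R)=0\) for \(0\le i\le j-1\). Already \(\Ext_{R}^{0}(\hHT_{*}(X;\AA),R)=\Hom_{R}(\hHT_{*}(X;\AA),R)\) is typically nonzero. The correct dictionary says that exactness at \(-1\) and \(0\) together assert that the edge map \(\HT^{*}(X;\AA)\to H^{0}(\AB^{*}(X;\AA))=\Hom_{R}(\hHT_{*}(X;\AA),R)\) is an isomorphism, while exactness at \(1\le i\le j-2\) corresponds to \(\Ext_{R}^{i}(\hHT_{*}(X;\AA),R)=0\) for \(1\le i\le j-1\). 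This is exactly the indexing subtlety you flagged at the end, and it has to be done carefully or the equivalence with the syzygy condition in \eqref{4:q2} fails. Second, your argument for \eqref{4:q2}~$\Rightarrow$~\eqref{4:q3} via ``depth \(\ge j\) over \(R\) implies depth \(\ge j\) over \(H^{*}(BL)\), hence free by Auslander--Buchsbaum'' does not work as stated: having a length-\(j\) \(M\)-regular sequence in \(\m\) gives no regular sequence inside the subideal \(\nn\lhd H^{*}(BL)\), which is what Auslander--Buchsbaum over \(H^{*}(BL)\) would need, and \(\HT^{*}(X;\AA)\) is not finitely generated over \(H^{*}(BL)\) in general. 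The equivalence \eqref{4:q2}~$\Leftrightarrow$~\eqref{4:q3} is a genuine algebraic statement recorded in~\citeorbitsone{Lemma~\ref*{thm:torsionfree}}; cite it rather than reprove it.

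Finally, the one point the paper itself singles out as requiring new work in the present setting --- the case \(\AA=\kktilde\), where the Leray--Hirsch argument must be run on the \(-1\)-eigenspace of the deck involution on \(\tilde X_{T}\) --- is not addressed in your proposal at all, even though the theorem is stated for general \(\AA\).
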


Several equivalent definitions of syzygies are collected in~\citeorbitsone{Sec.~\ref*{sec:Torsion-freeness}}.

\begin{proof}
  The proof of~\citeorbitsone{Thm.~\ref*{thm:conditions-partial-exactness}} carries over.
  Only the argument for the equivalence~\(\hbox{\eqref{4:q4}}\Leftrightarrow\hbox{\eqref{4:q3}}\)
  has to be slightly modified in the case of twisted coefficients:
  The involution on an orientation cover~\(\tilde X\) induces one on the Borel construction~\(\tilde X_{T}\),
  and \(\HT^{*}(X;\kktilde)=H^{*}(\tilde X_{T})_{-}\) in the notation of Section~\ref{sec:twisted},
  and analogously for~\(K\).
  (Note that the decomposition of the cohomology into the \(\pm1\)~eigenspaces of the involution
  exists even for spaces that do not satisfy our standing assumptions.)
  Now one considers the map
  \begin{equation}
    \HT^{*}(X;\kktilde)=H_{T/K}^{*}(\tilde X_{K})_{-} \to H^{*}(\tilde X_{K})_{-}=H_{K}^{*}(X;\kktilde).
  \end{equation}
  and applies the Leray--Hirsch argument
  as used in~\cite{AlldayFranzPuppe}
  to the \(-1\)~eigenspaces.
\end{proof}

\subsection{Homology manifolds}
\label{applications-homology-mf}

In this section we assume that \(X\) is a \(\kk\)-homology manifold.
Theorem~\ref{thm:exthab-ss-c}
and \citeorbitsone{Thm.~\ref*{thm:exthab-ss}}
may be combined
with Poincaré duality and Poincaré--Alexander--Lefschetz duality
in various ways. The following result is an example of this.
Recall that \(\hatX_{i}=X\setminus X_{i}\).

\begin{corollary}
  The following spectral sequences
  are isomorphic from the \(E_{2}~\)page on:
  \begin{align*}
    E_{1}^{p} &= \hHT_{*}(\hatX_{p-1},\hatX_{p}) \;\Rightarrow\; \hHT_{*}(X), \\
    E_{2}^{p} &= \Ext_{R}^{p}(\HT^{*}(X),R)  \;\Rightarrow\; \hHT_{*}(X).
  \end{align*}
\end{corollary}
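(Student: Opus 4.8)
The plan is to identify the two spectral sequences by routing through two auxiliary ones attached to $\HTc^*(X;\kktilde)$, the equivariant cohomology of $X$ with compact supports and twisted coefficients, combining the spectral‑sequence form of equivariant Poincar\'e--Alexander--Lefschetz duality, the comparison theorem of Theorem~\ref{thm:exthab-ss-c}, and ordinary equivariant Poincar\'e duality to close the loop. Throughout I write $\hatX_i=X\setminus X_i$ for the complements of the orbit skeleta.

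First I would invoke Proposition~\ref{thm:PAL-duality}: the cap product with an equivariant orientation identifies, compatibly from the $E_1$ page on and with a shift of internal degree by $-n$, the spectral sequence $E_1^p=\hHT_*(\hatX_{p-1},\hatX_p)\Rightarrow\hHT_*(X)$ with the orbit filtration spectral sequence $E_1^p=\HTc^*(X_p,X_{p-1};\kktilde)\Rightarrow\HTc^*(X;\kktilde)$. Next I would apply Theorem~\ref{thm:exthab-ss-c} in its version for cohomology with compact supports and twisted coefficients (admissible by the opening remarks of this section) to identify, from the $E_2$ page on, this orbit filtration spectral sequence with the universal coefficient spectral sequence $E_2^p=\Ext_R^p(\hHTc_*(X;\kktilde),R)\Rightarrow\HTc^*(X;\kktilde)$.

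It then remains to match this last spectral sequence with the second one in the statement. Equivariant Poincar\'e duality (Proposition~\ref{thm:PD-noncompact}) supplies an isomorphism of $R$-modules $\hHTc_*(X;\kktilde)\cong\HT^{n-*}(X)$, which turns the $E_2$ term into $\Ext_R^p(\HT^*(X),R)$ and the abutment into $\hHT_{n-*}(X)$. The delicate point — and the one I expect to be the main obstacle — is that this has to be an isomorphism of the \emph{entire} spectral sequence, not merely of the $E_2$ page and the limit term. I would obtain this from the fact that the duality isomorphism is realized on the (co)chain level by a map of dg $R$-modules, namely the cap‑product quasi‑isomorphism of Lemma~\ref{thm:hCTXi-hCThatXi-iso} in its case $i=-1$ together with the tautness quasi‑isomorphism $\barCTc^*(X;\kktilde)\to\CTc^*(X;\kktilde)$; since every complex in sight is $R$-homotopy equivalent to a finitely generated free $R$-module (the Hirsch--Brown model), this map is an $R$-homotopy equivalence, so $\Hom_R(-,R)$ transports the dg $R$-module $\hCTc_*(X;\kktilde)$, and hence its universal coefficient spectral sequence, onto the universal coefficient spectral sequence $E_2^p=\Ext_R^p(\HT^*(X),R)\Rightarrow\hHT_*(X)$ of Proposition~\ref{thm:4:uct} for the pair $(X,\emptyset)$, which is the second spectral sequence in the statement. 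Composing the three identifications then finishes the argument; the grading shifts accumulated along the way must cancel, since the spectral sequence one ends up with again converges to $\hHT_*(X)$, exactly as the first one does.
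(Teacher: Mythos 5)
Your argument is correct and follows essentially the same route as the paper: identify the first spectral sequence with the orbit filtration spectral sequence for $\HTc^{*}(X;\kktilde)$ via Proposition~\ref{thm:PAL-duality}, identify the second with the universal coefficient spectral sequence for $\HTc^{*}(X;\kktilde)$ via Poincaré duality, and close the loop with Theorem~\ref{thm:exthab-ss-c}. You only permute the order of the last two steps, and you helpfully make explicit why the Poincaré duality isomorphism upgrades to an isomorphism of the whole universal coefficient spectral sequence (via the chain-level cap product and the $R$-homotopy equivalence with a finitely generated free model), a point the paper passes over silently.
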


\begin{proof}
  Let \(n=\dim X\). By Theorem~\ref{thm:PAL-duality},
  the first spectral sequence is isomorphic,
  from the \(E_{1}\)~page on, to the spectral sequence
  \begin{equation}
    E_{1}^{p}=\HTc^{*}(X_{p},X_{p-1};\kktilde)[-n] \;\Rightarrow\; \HTc^{*}(X;\kktilde)[-n].
  \end{equation}
  By Poincaré duality, 
  the second spectral sequence is isomorphic to
  \begin{equation}
    E_{2}^{p}=\Ext_{R}^{p}(\hHTc_{*}(X;\kktilde),R)[-n] \;\Rightarrow\; \HTc^{*}(X;\kktilde)[-n].
  \end{equation}
  Hence the claim follows from Theorem~\ref{thm:exthab-ss-c}.
\end{proof}

\begin{proposition}
  \label{thm:thom-iso-Xi-Xi1}
  For any~\(0\le i\le r\) there is an isomorphism of \(R\)-modules
  \begin{equation*}
    \HT^{*}(\hatX_{i-1},\hatX_{i})\cong\HT^{*}(X_{i}\setminus X_{i-1}),
  \end{equation*}
  preserving degrees modulo~\(2\).
\end{proposition}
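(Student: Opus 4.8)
The plan is to run the argument of Proposition~\ref{thm:thom-iso} for the closed $T$-stable subset $Y:=X_{i}\setminus X_{i-1}$ of the open set $U:=\hatX_{i-1}=X\setminus X_{i-1}$. First note that $U$ is open in~$X$, hence a $\kk$-homology manifold of dimension~$n$ on each component, with orientation cover the restriction of that of~$X$; that $Y$ is closed and $T$-stable in~$U$, since $X_{i}$ is closed in~$X$; and that $U\setminus Y=X\setminus X_{i}=\hatX_{i}$. Thus the left-hand side of the proposition is $\HT^{*}(U,U\setminus Y)$, exactly the type of group treated in Proposition~\ref{thm:thom-iso}, except that $Y$ need not be equidimensional.

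Next I would establish the local structure of~$Y$. For $x\in Y$ let $K=(T_{x})^{0}$, a subtorus of rank~$r-i$. Using that only finitely many isotropy groups occur, one checks that $Z_{K}:=\{\,x'\in Y:(T_{x'})^{0}=K\,\}=Y\cap X^{K}$ is open in both $Y$ and~$X^{K}$; hence in a neighbourhood of~$x$ the space~$Y$ coincides with~$X^{K}$. By the theorem of Conner and Floyd \cite[Thm.~V.3.2]{Borel:1960}, $X^{K}$ is a $\kk$-homology manifold whose components have dimension congruent to~$n$ modulo~$2$; therefore $Y$ is locally such a homology manifold, and its components carry dimensions $\equiv n\pmod 2$. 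Moreover, since each component of~$Y$ is open in a component of some~$X^{K}$, Lemma~\ref{thm:orientation-cover-XT}, applied to the action of the subtorus~$K$ on~$X$, shows that the orientation cover of~$X$ restricts on that component to its intrinsic orientation cover.

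Granting these two facts, the proof of Proposition~\ref{thm:thom-iso} goes through with only notational changes. Write $Y^{(m)}$ for the union of the $m$-dimensional components of~$Y$; there are finitely many values of~$m$ (all $\equiv n\pmod 2$), and the $Y^{(m)}$ are pairwise disjoint closed $T$-stable subsets of~$U$, so $\HT^{*}(U,U\setminus Y)$ splits by excision as $\bigoplus_{m}\HT^{*}(U,U\setminus Y^{(m)})$. For each~$m$, equivariant Poincaré--Alexander--Lefschetz duality for the pair $(Y^{(m)},\emptyset)$ in~$U$ (Theorem~\ref{thm:PAL-A-B}) together with equivariant Poincaré duality on~$Y^{(m)}$ (Proposition~\ref{thm:PD-noncompact}) identifies $\HT^{*}(U,U\setminus Y^{(m)})$ with $\HT^{*-(n-m)}(Y^{(m)})$. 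Since $\HT^{*}(X_{i}\setminus X_{i-1})=\bigoplus_{m}\HT^{*}(Y^{(m)})$ and every shift $n-m$ is even, this yields an $R$-module isomorphism preserving degrees modulo~$2$.

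The step I expect to be the main obstacle is the orientation bookkeeping in the middle: the twisted coefficients produced by Poincaré--Alexander--Lefschetz duality on~$U$ are those of the orientation cover of~$X$, and before invoking Poincaré duality on a stratum $Y^{(m)}$ one must know that their restriction to $Y^{(m)}$ is its own orientation twisting --- which is precisely what the extension of Lemma~\ref{thm:orientation-cover-XT} to the subtorus~$K$ delivers. The remaining ingredients --- the identification of a neighbourhood of~$x$ in $X_{i}\setminus X_{i-1}$ with one in~$X^{K}$, the finiteness of the stratification, and additivity of relative cohomology over disjoint closed sets --- are routine consequences of the standing assumptions.
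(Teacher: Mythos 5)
Your proof is correct and proceeds in the same spirit as the paper's: reduce to the Thom isomorphism (Proposition~\ref{thm:thom-iso}) inside the open homology manifold $\hatX_{i-1}$, using Lemma~\ref{thm:orientation-cover-XT} (applied with a subtorus $K$ in place of $T$) to control orientation covers and the Conner--Floyd theorem for the dimension parity. Where you diverge is in the decomposition step, and there your version is in fact the more careful one. The printed proof asserts that ``there is a subtorus $K\subset T$ such that $\hatX_{i-1}^{K}=X_{i}\setminus X_{i-1}$'' and then quotes Proposition~\ref{thm:thom-iso} once. As stated this can fail: if two distinct rank-$(r-i)$ subtori occur as isotropy identity components on $X_{i}\setminus X_{i-1}$ --- already the case for the coordinate $T^{2}$-action on $\CP^{1}\times\CP^{1}$ with $i=1$, where two of the four one-dimensional strata carry isotropy $S^{1}\times 1$ and the other two $1\times S^{1}$ --- then no single subtorus fixes exactly $X_{i}\setminus X_{i-1}$ inside $\hatX_{i-1}$. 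What is really needed is a finite decomposition of $X_{i}\setminus X_{i-1}$ into disjoint closed $T$-stable pieces followed by excision; you decompose by local dimension and re-run the proof of Proposition~\ref{thm:thom-iso}, while the more economical choice, closer to the paper's intent, is to decompose by isotropy identity component $K$, so that each piece is $\hatX_{i-1}^{K}$ for a single $K$ and Proposition~\ref{thm:thom-iso}\,(2) applies verbatim. Either route is valid and yields the stated $R$-module isomorphism preserving degrees mod~$2$; the orientation bookkeeping via Lemma~\ref{thm:orientation-cover-XT} applied to $K=(T_x)^{0}$ that you single out is indeed the essential check.
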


\begin{proof}
  Since only finitely many
  isotropy groups occur in~\(X\),
  there is a subtorus~\(K\subset T\)
  such that \(\hatX_{i-1}^{K}=X_{i}\setminus X_{i-1}\).
  Hence our claim reduces to the Thom isomorphism from Proposition~\ref{thm:thom-iso}.
\end{proof}

The following result generalizes a theorem of Duflot~\cite[Thm.~1]{Duflot:1983}
concerning smooth actions on differential manifolds.
More than the extension to continuous actions on homology manifolds, our main insight is
that Duflot's result follows
by equivariant Poincaré--Alexander--Lefschetz duality
from
Proposition~\ref{thm:hHT-short-exact}
which is valid for \emph{all} \(T\)-spaces.

\begin{proposition}
  \label{thm:duflot-general}
  For any~\(0\le i\le r\)
  there are short exact sequences
  \begin{equation*}
    0 \to \HT^{*}(X,\hatX_{i}) \to \HT^{*}(X) \to \HT^{*}(\hatX_{i}) \to 0
  \end{equation*}
  and
  \begin{equation*}
    0 \to \HT^{*}(X,\hatX_{i-1}) \to \HT^{*}(X,\hatX_{i}) \to \HT^{*}(X_{i}\setminus X_{i-1}) \to 0
  \end{equation*}
  where the right map in the lower sequence preserves degrees only mod~\(2\).
\end{proposition}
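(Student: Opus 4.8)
The plan is to deduce both sequences by transporting, via equivariant Poincaré--Alexander--Lefschetz duality, the short exact sequences in equivariant \emph{homology} that Proposition~\ref{thm:hHT-short-exact} provides for \emph{all} $T$-spaces; this transport is the point of the result. Write $n=\dim X$ and let $\pi\colon\tilde X\to X$ be the orientation cover. Throughout I use the dualized form of Theorem~\ref{thm:PAL-A-B} (the version ``with the roles of homology and cohomology interchanged''): for a closed $T$-pair $(A,B)$ in~$X$ it furnishes an isomorphism, natural with respect to the long exact sequences of pairs and triples, between the homology sequence of $(A,B)$ with closed supports and twisted coefficients and the cohomology sequence of the triple $(X,X\setminus B,X\setminus A)$ in ordinary $\HT^{*}$; in particular $\hHTc_{k}(A,B;\kktilde)\cong\HT^{n-k}(X\setminus B,X\setminus A)$ and $\hHTc_{k}(A;\kktilde)\cong\HT^{n-k}(X,X\setminus A)$, while the absolute isomorphism $\hHTc_{k}(X;\kktilde)\cong\HT^{n-k}(X)$ is equivariant Poincaré duality (Proposition~\ref{thm:PD-noncompact}).

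For the first sequence I would apply this with the pair $(A,B)=(X,X_{i})$: the duality then matches the homology long exact sequence of $(X,X_{i})$ with the cohomology long exact sequence of the pair $(X,\hatX_{i})$, carrying $\hHTc_{*}(X_{i};\kktilde)$, $\hHTc_{*}(X;\kktilde)$ and $\hHTc_{*}(X,X_{i};\kktilde)$ to $\HT^{*}(X,\hatX_{i})$, $\HT^{*}(X)$ and $\HT^{*}(\hatX_{i})$. By Proposition~\ref{thm:hHT-short-exact}, applied to the orbit filtration of~$X$ for homology with closed supports and coefficients $\kktilde$ (indices $-1<i$), the sequence $0\to\hHTc_{*}(X_{i};\kktilde)\to\hHTc_{*}(X;\kktilde)\to\hHTc_{*}(X,X_{i};\kktilde)\to0$ is exact, and its image under the duality isomorphism is the first asserted short exact sequence.

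For the second sequence I would take instead $(A,B)=(X_{i},X_{i-1})$: now the homology long exact sequence of $(X_{i},X_{i-1})$ is matched with the cohomology long exact sequence of the triple $(X,\hatX_{i-1},\hatX_{i})$, carrying $\hHTc_{*}(X_{i-1};\kktilde)$, $\hHTc_{*}(X_{i};\kktilde)$ and $\hHTc_{*}(X_{i},X_{i-1};\kktilde)$ to $\HT^{*}(X,\hatX_{i-1})$, $\HT^{*}(X,\hatX_{i})$ and $\HT^{*}(\hatX_{i-1},\hatX_{i})$. Here Proposition~\ref{thm:hHT-short-exact} must be applied to the closed $T$-subspace $X_{i}$ in place of~$X$ (its orbit filtration being $(X_{i})_{k}=X_{k}$); for twisted coefficients this means applying the untwisted form of the proposition to $\tilde X_{i}=\pi^{-1}(X_{i})$ and passing to the $(-1)$-eigenspace of the deck involution. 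It gives that $0\to\hHTc_{*}(X_{i-1};\kktilde)\to\hHTc_{*}(X_{i};\kktilde)\to\hHTc_{*}(X_{i},X_{i-1};\kktilde)\to0$ is exact (trivially so when $i=0$, as then $X_{i-1}=\emptyset$), hence so is $0\to\HT^{*}(X,\hatX_{i-1})\to\HT^{*}(X,\hatX_{i})\to\HT^{*}(\hatX_{i-1},\hatX_{i})\to0$. Finally I would rewrite the last term using the equivariant Thom isomorphism $\HT^{*}(\hatX_{i-1},\hatX_{i})\cong\HT^{*}(X_{i}\setminus X_{i-1})$ of Proposition~\ref{thm:thom-iso-Xi-Xi1}; since that isomorphism preserves degrees only modulo~$2$, so does the resulting right-hand map.

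I expect the obstacle to be bookkeeping rather than substance. One has to stay consistently within ordinary (closed-support) cohomology $\HT^{*}$ and its dual Borel--Moore homology $\hHTc_{*}$, so that the statement really involves $\HT^{*}$ and not $\HTc^{*}$; and one has to notice that the second sequence needs Proposition~\ref{thm:hHT-short-exact} applied not to~$X$ but to the (in general non-manifold) closed stratum~$X_{i}$, which is legitimate because that proposition is proved for arbitrary $T$-spaces and its twisted version for~$X_{i}$ is recovered from the untwisted one on $\tilde X_{i}$ by taking $\tau$-eigenspaces. The only other input, the naturality of equivariant Poincaré--Alexander--Lefschetz duality for the long exact sequences of pairs and triples, is already built into Theorem~\ref{thm:PAL-A-B} and Lemma~\ref{thm:hCTXi-hCThatXi-iso}.
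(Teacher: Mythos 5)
Correct, and essentially the same argument as the paper: both deduce the two sequences by transporting the homology short exact sequences of Proposition~\ref{thm:hHT-short-exact} through (the dualized form of) equivariant Poincaré--Alexander--Lefschetz duality and then, for the second sequence, rewriting the last term via the Thom isomorphism of Proposition~\ref{thm:thom-iso-Xi-Xi1}. Your bookkeeping is a touch more explicit than the paper's terse wording, in particular in noting that for the second sequence one needs the exactness of
\(0\to\hHTc_{*}(X_{i-1};\kktilde)\to\hHTc_{*}(X_{i};\kktilde)\to\hHTc_{*}(X_{i},X_{i-1};\kktilde)\to0\),
obtained by applying Proposition~\ref{thm:hHT-short-exact} with \(X_{i}\) as the ambient \(T\)-space (and reducing the twisted case to \(\tilde X_{i}\) via \(\pm1\)-eigenspaces), which is precisely the sequence that dualizes to the cohomology triple \((X,\hatX_{i-1},\hatX_{i})\).
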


Duflot also considers actions of \(p\)-tori~\((\Z_{p})^{r}\) with~\(p>2\).
The results of~\cite{AlldayFranzPuppe} and this paper can as well be extended to \(p\)-tori;
we will elaborate on this elsewhere because some proofs require modification.

\begin{proof}
  By Proposition~\ref{thm:hHT-short-exact} we have a short exact sequence
  \begin{equation}
    \label{eq:Xi-X-short-exact}
    0 \longrightarrow \hHT_{*}(X_{i};\kktilde)
    \longrightarrow \hHT_{*}(X;\kktilde)
    \longrightarrow \hHT_{*}(X,X_{i};\kktilde)
    \longrightarrow 0.
  \end{equation}
  The first short exact sequence we are claiming follows from this
  by Poincaré--Alexander--Lefschetz duality (Theorem~\ref{thm:PAL-A-B}).

  Replacing \(X_{i}\) by~\(X_{i-1}\) and \(X_{j}\) by~\(X_{i}\)
  in Proposition~\ref{thm:hHT-short-exact}
  leads similarly to the short exact sequence
  \begin{equation}
    0 \to \HT^{*}(X,\hatX_{i-1}) \to \HT^{*}(X,\hatX_{i}) \to \HT^{*}(\hatX_{i-1},\hatX_{i}) \to 0.
  \end{equation}
  Combining this with Proposition~\ref{thm:thom-iso-Xi-Xi1}
  confirms our second claim.
\end{proof}

Not surprisingly, we also get the following spectral sequence version:

\begin{proposition}
  The spectral sequence associated
  to the filtration~\((\hatX_{i})\) and converging to~\(\HT^{*}(X)\)
  degenerates at the \(E_{1}\)~page.
\end{proposition}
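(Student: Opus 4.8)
The plan is to recognize the spectral sequence in question as one that equivariant Poincaré--Alexander--Lefschetz duality identifies with a homological spectral sequence already shown to degenerate, namely the one coming from the orbit filtration on equivariant chains with closed supports and twisted coefficients.

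First I would make precise which spectral sequence is meant: since \(X_{r}=X\), the open sets form the decreasing filtration \(X=\hatX_{-1}\supset\hatX_{0}\supset\dots\supset\hatX_{r}=\emptyset\), and the associated spectral sequence converging to \(\HT^{*}(X)\) is exactly the one appearing in Proposition~\ref{thm:PAL-duality} with \(E_{1}^{p}=\HT^{*}(\hatX_{p-1},\hatX_{p})\). Next I would invoke the second half of Proposition~\ref{thm:PAL-duality}: dualizing the cap product with an equivariant orientation \(o_{T}\in\hHTc_{n}(X;\kktilde)\) (which exists because \(X\) is a \(\kk\)-homology manifold) yields an isomorphism of spectral sequences, from the \(E_{1}\) page on, between this one and the spectral sequence with \(E_{1}^{p}=\hHTc_{*}(X_{p},X_{p-1};\kktilde)\) converging to \(\hHTc_{*}(X;\kktilde)\).

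The point is that this latter spectral sequence is precisely the one associated with the orbit filtration of \(\hCTc_{*}(X;\kktilde)\). Hence I would apply Corollary~\ref{thm:hHT-orbit-degeneration-c}, in its version for homology with closed supports and twisted coefficients (valid by the remark that all results of that subsection carry over to the other pair of supports, and the twisted case follows from the untwisted case for an orientation cover), to conclude that it degenerates at \(E_{1}\). Transporting the degeneration back along the isomorphism of Proposition~\ref{thm:PAL-duality} gives the statement.

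I do not expect a genuine obstacle here: the substance is entirely contained in Proposition~\ref{thm:PAL-duality} and Corollary~\ref{thm:hHT-orbit-degeneration-c}, both already established. The only points requiring care are bookkeeping ones — checking that the degree shift by \(-n\) and the indexing of the two filtrations match up so that the spectral sequences genuinely correspond term by term, and confirming that the version of Corollary~\ref{thm:hHT-orbit-degeneration-c} being used is the one with compact supports and twisted coefficients rather than the literally stated constant-coefficient, closed-support version.
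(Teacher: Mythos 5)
Your proposal is correct and is essentially the paper's proof: both arguments invoke Proposition~\ref{thm:PAL-duality} to identify, from the \(E_{1}\) page on, the spectral sequence of the open filtration~\((\hatX_{i})\) for \(\HT^{*}(X)\) with the orbit-filtration spectral sequence for \(\hHTc_{*}(X;\kktilde)\), and then cite Corollary~\ref{thm:hHT-orbit-degeneration-c} in its version for the other pair of supports and twisted coefficients. The only blemish is a small inconsistency in naming the supports (you first call the relevant homology ``closed supports'' and later ``compact supports''), but the intended meaning is clear and does not affect the argument.
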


\begin{proof}
  By Theorem~\ref{thm:PAL-duality}, this spectral sequence is isomorphic,
  from the \(E_{1}\)~page on, to the spectral sequence converging to~\(\hHTc_{*}(X;\kktilde)\)
  with \(E_{1}^{p}=\hHTc_{*}(X_{p},X_{p-1};\kktilde)\). The latter degenerates
  by Corollary~\ref{thm:hHT-orbit-degeneration-c}.
\end{proof}

\subsection{Uniform actions}

Let \(X\) be a \(T\)-space.
For dimensional reasons, it follows from Proposition~\ref{thm:stratum-cm-c}
that the differential
\begin{equation}
   d_{i}\colon \HT^{*}(X_{i}, X_{i-1})\to \HT^{*+1}(X_{i+1}, X_{i})
\end{equation}
cannot be injective unless \(\HT^{*}(X_{i}, X_{i-1})=0\).
This has implications for the uniformity of actions,
which we discuss now.

Recall from~\cite[Def.~3.6.17]{AlldayPuppe:1993}
that the \(T\)-action on~\(X\) is said to be \emph{uniform}
if for any subtorus~\(K\subset T\) and any component~\(F\) of~\(X^{K}\)
one has \(F^{T}\ne\emptyset\).
(This implies \(X^{T}\ne\emptyset\) if \(X\ne\emptyset\).)

We call \(F\) a \emph{minimal stratum} of~\(X\) corresponding to the subtorus~\(K\subset T\)
if \(F\) is a component of~\(X^{K}\) 
and if \(F^{L}=\emptyset\) for any subtorus~\(L\) properly containing \(K\).
Note that the action is uniform if and only if all minimal strata are components of~\(X^{T}\).
This observation makes it easy to construct non-uniform actions, even in the context
of compact orientable manifolds with fixed points, see \cite[Ex.~1.7.4]{Allday:2005}.

It has been noted by a number of authors
that the \(T\)-action is uniform if \(\HT^{*}(X)\) is a free \(R\)-module.
A large part of~\cite{AlldayFranzPuppe}, however, is concerned
with the case where \(\HT^{*}(X)\) is a torsion-free \(R\)-module (that is, a first syzygy),
but not necessarily free. So we note the following, which is also an immediate
consequence of the characterization of uniform actions given in~\cite[Thm.~3.6.18]{AlldayPuppe:1993}.

\begin{proposition}
  If \(\HT^{*}(X)\) is \(R\)-torsion-free, then the action is uniform.
\end{proposition}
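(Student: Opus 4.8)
The plan is to assume the action is \emph{not} uniform and derive a contradiction with torsion-freeness. So fix a subtorus $K$ and a connected component $F$ of $X^{K}$ with $F^{T}=\emptyset$; necessarily $K\subsetneq T$ (otherwise $F^{T}=F\ne\emptyset$), so $L=T/K$ is a nontrivial torus. By the local-contractibility convention adopted in Remark~\ref{rem:4:loc-contractible}, the fixed point set $X^{K}$ is locally connected, hence $F$ is both open and closed in $X^{K}$; thus $X^{K}=F\sqcup(X^{K}\setminus F)$ as a topological disjoint union and $\HT^{*}(X^{K})=\HT^{*}(F)\oplus\HT^{*}(X^{K}\setminus F)$ as $R$-modules. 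Since $K$ acts trivially on $X^{K}$, Proposition~\ref{thm:action-trivial} identifies $\HT^{*}(F)=H_{L}^{*}(F)\otimes_{R_{L}}R$.

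I would then record two facts about this summand. First, $H_{L}^{*}(F)$ is a nonzero (because $F\ne\emptyset$) finitely generated graded $R_{L}$-module, so $\ann_{R_{L}}H_{L}^{*}(F)$ is a proper homogeneous ideal and hence lies in $\m_{L}$; since $R$ is free over $R_{L}$, this gives $\ann_{R}\HT^{*}(F)=\bigl(\ann_{R_{L}}H_{L}^{*}(F)\bigr)R\subseteq\m_{L}R=\ker\bigl(R\to H^{*}(BK)\bigr)=:I_{K}$. As $I_{K}$ is prime and contains $\ann_{R}\HT^{*}(F)$, the set $S_{K}=R\setminus I_{K}$ is multiplicative and $S_{K}^{-1}\HT^{*}(F)\ne 0$. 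Second, the restriction map $\HT^{*}(F)\to\HT^{*}(F^{T})$ is the zero map, because $F^{T}=\emptyset$.

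Next I would localize the chain of restriction maps $\HT^{*}(X)\to\HT^{*}(X^{K})\to\HT^{*}(X^{T})$ at $S_{K}$. In characteristic zero one checks, exactly as for $K=T$, that $X^{S_{K}}=X^{K}$, so by the Localization Theorem (Proposition~\ref{thm:localization-thm-homology}) the map $S_{K}^{-1}\HT^{*}(X)\to S_{K}^{-1}\HT^{*}(X^{K})$ is an isomorphism. On the other hand, the composite $\HT^{*}(X)\to\HT^{*}(X^{T})$ is the restriction to the fixed point set $X^{T}=X_{0}$, and it is injective because $\HT^{*}(X)$ is torsion-free: this is the case $j=1$ of Theorem~\ref{thm:4:conditions-partial-exactness} (exactness of~\eqref{eq:4:atiyah-bredon} at position~$-1$), or, more directly, its kernel equals the torsion submodule of $\HT^{*}(X)$ since $\HT^{*}(X^{T})$ is $R$-free and $S^{-1}\HT^{*}(X)\cong S^{-1}\HT^{*}(X^{T})$ for $S$ the set of nonzero homogeneous elements. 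Injectivity is preserved by localization, so $S_{K}^{-1}\HT^{*}(X^{K})\to S_{K}^{-1}\HT^{*}(X^{T})$ is injective. But this map respects the decomposition $\HT^{*}(X^{K})=\HT^{*}(F)\oplus\HT^{*}(X^{K}\setminus F)$ and vanishes on the first summand by the second fact above, so it kills the nonzero module $S_{K}^{-1}\HT^{*}(F)$ -- a contradiction, which proves the proposition.

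The steps needing the most care are the topological point that $F$ is open in $X^{K}$ (which rests solely on the convention of Remark~\ref{rem:4:loc-contractible}, not on anything deeper) and the commutative-algebra bookkeeping: that $S_{K}=R\setminus I_{K}$ is exactly the multiplicative set with $X^{S_{K}}=X^{K}$, and that $\ann_{R}\HT^{*}(F)\subseteq I_{K}$, so that $S_{K}^{-1}\HT^{*}(F)$ is genuinely nonzero. Everything else is formal manipulation with the localization theorem. (Alternatively, as noted in the text, the statement also follows at once from the characterization of uniformity in \cite[Thm.~3.6.18]{AlldayPuppe:1993} together with the fact that a torsion-free $R$-module is a first syzygy.)
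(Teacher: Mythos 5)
Your proof is correct and takes essentially the same route as the paper's: both localize at the prime $I_{K}=\ker(R\to H^{*}(BK))$ to identify $S_{K}^{-1}\HT^{*}(X)$ with $S_{K}^{-1}\HT^{*}(X^{K})=S_{K}^{-1}\HT^{*}(F)\oplus S_{K}^{-1}\HT^{*}(X^{K}\setminus F)$, observe that the $F$-summand is non-zero but dies under restriction to $X^{T}$, and contradict the injectivity of $\HT^{*}(X)\to\HT^{*}(X^{T})$ that torsion-freeness provides. The only cosmetic difference is that the paper picks a witness element $c\in\HT^{*}(X)$ and traces its image, whereas you localize the whole injective restriction map; you also spell out a few facts the paper leaves implicit (that $\ann_{R}\HT^{*}(F)\subset I_{K}$ so $S_{K}^{-1}\HT^{*}(F)\ne 0$, and that $X^{S_{K}}=X^{K}$), and you take any component $F$ of $X^{K}$ with $F^{T}=\emptyset$ rather than a minimal stratum, which is all the argument actually requires.
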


\begin{proof}
  Assume that there is a minimal stratum~\(F\), corresponding to a subtorus \(K\subsetneq T\).
  Then \(\HT^{*}(F)\) is a direct summand of~\(\HT^{*}(X^{K})\). Set \(S=H^{*}(BK)\setminus\{0\}\)
  and \(\tilde S=R\setminus\{0\}\).
  By the localization theorem, we have
  \begin{equation}
    S^{-1}\HT^{*}(X) \cong S^{-1}\HT^{*}(X^{K}) = S^{-1}\HT^{*}(F)\oplus S^{-1}\HT^{*}(X^{K}\setminus F),
  \end{equation}
  so we can choose a~\(c\in\HT^{*}(X)\) such that its image in~\(S^{-1}\HT^{*}(X^{K})\)
  is non-zero and lies in~\(S^{-1}\HT^{*}(F)\). Because \(F^{T}\) is empty, \(\HT^{*}(F)\)
  is \(R\)-torsion. This implies that the image of~\(c\) in~\(\tilde S^{-1}\HT^{*}(X^{T})\) is zero,
  hence also the one in~\(\HT^{*}(X^{T})\). But this is a contradiction
  because the torsion-freeness of~\(\HT^{*}(X)\) is equivalent
  to the injectivity of the map~\(\HT^{*}(X)\to\HT^{*}(X^{T})\).
\end{proof}

\begin{proposition}
  \label{thm:minimal-stratum-HAB}
  Let \(F\) be a minimal stratum of~\(X\) corresponding to a subtorus \(K\subset T\) of rank~\(r-i\).
  Then \(H^{i}(\AB^{*}(X))\ne0\).
\end{proposition}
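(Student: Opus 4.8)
The plan is to use the identification from Corollary~\ref{thm:4:ext-hab}, namely
\(H^{i}(\AB^{*}(X)) = \Ext_{R}^{i}(\hHT_{*}(X),R)\), and to show this \(\Ext\)~module is non-zero by a localization argument at the prime ideal~\(\mathfrak p\) corresponding to~\(K\). Write \(S = H^{*}(BL)\setminus\{0\}\) where \(L = T/K\), so that \(S^{-1}R\) is a local ring of Krull dimension \(\operatorname{rank}K = r-i\). First I would observe that, since \(\Ext\) over the Noetherian ring~\(R\) commutes with localization at~\(S\), it suffices to prove that \(S^{-1}\Ext_{R}^{i}(\hHT_{*}(X),R) = \Ext_{S^{-1}R}^{i}(S^{-1}\hHT_{*}(X), S^{-1}R)\) is non-zero.

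Next I would compute \(S^{-1}\hHT_{*}(X)\) using the localization theorem for equivariant homology (Proposition~\ref{thm:localization-thm-homology}): it equals \(S^{-1}\hHT_{*}(X^{K})\), which splits as a direct sum over the components of~\(X^{K}\), one summand being \(S^{-1}\hHT_{*}(F)\). Since \(\Ext\) over a ring is additive in its first argument, it is enough to show \(\Ext_{S^{-1}R}^{i}(S^{-1}\hHT_{*}(F), S^{-1}R) \neq 0\). Now \(F\) is a minimal stratum, so \(K\) acts on~\(F\) with finite isotropy on a dense set — more precisely, because \(F^{L'} = \emptyset\) for every subtorus \(L' \supsetneq K\), the subtorus~\(K\) acts locally freely on~\(F\) (in characteristic zero this is exactly the statement that no larger subtorus fixes any point). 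By Proposition~\ref{thm:locally-free-action} applied with this~\(K\) of rank~\(r-i\) and quotient \(L\) of rank~\(i\), we get \(\hHT_{*}(F) = H^{L}_{*-(r-i)}(F/K)\), a finitely generated \(R_{L} = H^{*}(BL)\)-module that is non-zero (as \(F\neq\emptyset\) and \(F/K\) is a non-empty space, so \(H_{0}(F/K)\neq 0\)); moreover \(R\) acts on it through the surjection \(R \to R_{L}\) composed with... rather, through \(R_{L} \hookrightarrow R\) — one must track which copy. The point is that \(S^{-1}\hHT_{*}(F)\) is a non-zero \(S^{-1}R\)-module that is (after the identification \(S^{-1}R = (S^{-1}R_{L}) \otimes_{R_{L}} R = \text{a localization}\)) essentially extended from the field \(S^{-1}R_{L}\), hence is free as an \(S^{-1}R_{K}\)-module where \(S^{-1}R_{K}\) is now a polynomial ring in \(r-i\) variables over that field; concretely \(S^{-1}\hHT_{*}(F)\) is a non-zero finitely generated module over the regular local ring \(S^{-1}R\) of dimension \(r-i\) which is annihilated by the maximal ideal's worth of "\(L\)-directions", i.e. it has dimension~\(0\) over \(S^{-1}R\), hence depth~\(0\).

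For such a module, \(\Ext_{S^{-1}R}^{d}(M, S^{-1}R) \neq 0\) where \(d = r-i\) is the Krull dimension of the regular local ring \(S^{-1}R\): indeed a non-zero finitely generated module of dimension zero over a regular local ring of dimension~\(d\) has projective dimension exactly~\(d\) by Auslander--Buchsbaum, and for a regular local ring the top \(\Ext\) into the ring is non-zero (it is the Matlis dual of the top local cohomology, or simply: \(\Ext^{d}_{S^{-1}R}(M, S^{-1}R) \neq 0\) whenever \(\operatorname{pd} M = d\), since a minimal free resolution gives a non-split surjection at the top). This yields \(\Ext_{S^{-1}R}^{r-i}(S^{-1}\hHT_{*}(F), S^{-1}R) \neq 0\); but \(r-i\) is \emph{not} the index~\(i\) we want. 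I would instead invoke \citeorbitsone{Lemma~\ref*{thm:dim-depth-ext}} (the dimension/depth/\(\Ext\) dictionary): since \(\hHT_{*}(F) = H^{L}_{*-(r-i)}(F/K)\) is, via Proposition~\ref{thm:locally-free-action}, a Cohen--Macaulay \(R\)-module of dimension~\(i\) (it is free over the subring \(R_{L}\) of dimension~\(i\), being \(H^{*-(r-i)}(F/K;\kk)\)-valued with \(F/K\) compact-free of finite type — here the Cohen--Macaulayness of \(\hHT_{*}(X_{i},X_{i-1})\) via Proposition~\ref{thm:stratum-cm-c} plus the minimal-stratum hypothesis pins this down), we have \(\Ext_{R}^{p}(\hHT_{*}(F),R) = 0\) for \(p\neq r-i\) — wait, that gives the wrong index again. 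The resolution of this apparent sign conflict is the main obstacle, and it is resolved by noting that what appears in \(\AB^{*}\) is \(\HT^{*}(X_{i},X_{i-1})\), a module of dimension \(r-i\), whose top \(\Ext\) sits in degree \(i\) by \citeorbitsone{Lemma~\ref*{thm:dim-depth-ext}}; so the correct statement to prove is that the summand of \(\hHT_{*}(X)\) "responsible for \(F\)" contributes in \(\Ext\)-degree~\(i\). Concretely: \(F \subset X_{i}\setminus X_{i-1}\) is a full union of components of that stratum (being minimal), so \(\hHT_{*}(F)\) is a direct summand of \(\hHT_{*}(X_{i},X_{i-1})\), hence Cohen--Macaulay of dimension~\(r-i\) by Proposition~\ref{thm:stratum-cm-c}, hence \(\Ext_{R}^{i}(\hHT_{*}(F),R)\neq 0\) by \citeorbitsone{Lemma~\ref*{thm:dim-depth-ext}}. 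Finally I would descend this to \(\hHT_{*}(X)\): using the short exact sequences of Proposition~\ref{thm:hHT-short-exact} and the long exact \(\Ext\)-sequences, together with the fact that \(\hHT_{*}(X,X_{i})\) has dimension \(\le r-i-1\) (Proposition~\ref{thm:Ext-i-j-0}) so contributes nothing to \(\Ext^{i}\), one gets \(\Ext_{R}^{i}(\hHT_{*}(X),R) \twoheadleftarrow \Ext_{R}^{i}(\hHT_{*}(X_{i}),R)\) and then a further surjection onto \(\Ext_{R}^{i}(\hHT_{*}(X_{i},X_{i-1}),R) = \bigoplus \Ext_{R}^{i}(\hHT_{*}(\text{component}),R)\), which is non-zero by the summand for~\(F\). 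Chasing the direct-summand splitting through these maps — making sure the class from~\(F\) is not killed — is the delicate point; I expect it to go through because the relevant connecting maps vanish by the dimension estimates already used in the proof of Propositions~\ref{thm:hHT-short-exact} and~\ref{thm:Ext-i-j-0}, so the composite \(\Ext^{i}_{R}(\hHT_{*}(X),R) \to \Ext^{i}_{R}(\hHT_{*}(X_{i},X_{i-1}),R)\) is surjective and we simply need its target non-zero.
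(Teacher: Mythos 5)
Your proposal is a genuinely different route from the paper's proof, and it contains a gap at the crucial final step.

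The paper proves the statement directly inside the Atiyah--Bredon complex, without invoking Corollary~\ref{thm:4:ext-hab} at all: since \(F\) is a minimal stratum, it is a full component of both \(X_{i}\setminus X_{i-1}\) and of \(X_{i}\setminus X_{i-2}\), so the direct summand \(\HT^{*}(F)\) of \(\AB^{i}(X)=\HT^{*+i}(X_{i},X_{i-1})\) maps injectively into \(\HT^{*}(X_{i},X_{i-2})\) and therefore meets \(\im d_{i-1}\) only in zero. Then a single dimension count finishes it: \(\dim_{R}\HT^{*}(F)=r-i\) while \(\HT^{*}(X_{i+1},X_{i})\) is Cohen--Macaulay of dimension \(r-i-1\) by Proposition~\ref{thm:stratum-cm-c}, so \(d_{i}\) cannot be injective on that summand, producing a non-zero class in \(\ker d_{i}\setminus\im d_{i-1}\). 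No \(\Ext\), no localization, no universal coefficient spectral sequence.

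Your approach is to go through \(H^{i}(\AB^{*}(X))=\Ext_{R}^{i}(\hHT_{*}(X),R)\). The first half of your write-up (localization at \(S=H^{*}(BL)\setminus\{0\}\), computing the dimension of \(S^{-1}\hHT_{*}(F)\)) is a detour that you eventually abandon, correctly sensing that the indices come out wrong. The part you settle on — that \(\hHT_{*}(F)\) is a direct summand of \(\hHT_{*}(X_{i},X_{i-1})\), hence Cohen--Macaulay of dimension \(r-i\), hence \(\Ext_{R}^{i}(\hHT_{*}(F),R)\ne0\) — is correct. But the final descent to \(\Ext_{R}^{i}(\hHT_{*}(X),R)\) does not work as stated. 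From the short exact sequence
\[
  0\to\hHT_{*}(X_{i})\to\hHT_{*}(X)\to\hHT_{*}(X,X_{i})\to 0
\]
and Proposition~\ref{thm:Ext-i-j-0}, which gives \(\Ext_{R}^{p}(\hHT_{*}(X,X_{i}),R)=0\) only for \(p\le i\), the long exact \(\Ext\)-sequence yields an \emph{injection} \(\Ext_{R}^{i}(\hHT_{*}(X),R)\hookrightarrow\Ext_{R}^{i}(\hHT_{*}(X_{i}),R)\), not the surjection \(\Ext_{R}^{i}(\hHT_{*}(X_{i}),R)\twoheadrightarrow\Ext_{R}^{i}(\hHT_{*}(X),R)\) you assert; indeed there is no natural map in that direction. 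An injection with non-zero target says nothing about whether the source is non-zero, so you have not established \(\Ext_{R}^{i}(\hHT_{*}(X),R)\ne0\). Similarly, the further "surjection" onto \(\Ext_{R}^{i}(\hHT_{*}(X_{i},X_{i-1}),R)\) runs into the same problem: the connecting map from \(\Ext_{R}^{i-1}(\hHT_{*}(X_{i-1}),R)\) is not controlled by Proposition~\ref{thm:Ext-i-j-0}. To make an \(\Ext\)-based argument rigorous you would essentially have to reproduce the diagram chase in Section~\ref{sec:quick-proof} and identify where the class coming from \(F\) ends up, which is considerably more work than the paper's one-paragraph dimension count in \(\AB^{*}(X)\).
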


\begin{proof}
  The minimal stratum~\(F\) is a component of both~\(X_{i}\setminus X_{i-1}\)
  and \(X_{i}\setminus X_{i-2}\). So the summand \(\HT^{*}(F)\)
  maps isomorphically under the restriction~\(\HT^{*}(X_{i},X_{i-1})\to\HT^{*}(X_{i},X_{i-2})\),
  and \(\HT^{*}(F)\cap\im d_{i-1}=0\).
  On the other hand, \(\dim_{R}\HT^{*}(F)=r-i\).
  Because \(\HT^{*}(X_{i+1}, X_{i})\) is of dimension~\(r-i-1\), the restriction of the differential
  to~\(\HT^{*}(F)\) cannot be injective.
\end{proof}

Proposition~\ref{thm:minimal-stratum-HAB} also follows
from~\cite[Thm.~3.6.14]{AlldayPuppe:1993}.
For another result relating the uniformity and torsion-freeness,
see~\cite[Thm.~3.8.7\,(4)]{AlldayPuppe:1993}.
In the notation of that theorem, a minimal stratum~\(F=c\)
corresponding to a subtorus~\(K\subset T\) gives one of the pairs~\((K_{i},c_{i})\),
\(1\le i\le\gamma\).

\section{
The cohomology of the Atiyah--Bredon complex}
\label{sec:quick-proof}

In this section we shall give a direct proof of Corollary~\ref{thm:4:ext-hab}.
Instead of reasoning with spectral sequences, we will rely
on Propositions~\ref{thm:hHT-short-exact} and~\ref{thm:Ext-i-j-0}.
Our proof is valid for any pair of supports and, in case of a \(\kk\)-homology manifold,
also for twisted coefficients. For ease of notation, we write it down only
for constant coefficients and the usual pair of supports.
Recall that we are still assuming the characteristic of~\(\kk\) to be \(0\).
For convenience, we define \(X_{r+1}=X\) in addition to~\(X_{-1}=\emptyset\).

Let~\(0\le i\le r\).
The following commutative diagram with exact rows will play a central role:
\begin{equation}
\begin{tikzcd}[column sep=small]
  \label{eq:hHT-Xi-Xi1-diagram}
  0 \rar & \hHT_{*}(X_{i}) \dar \rar & \hHT_{*}(X_{i+1}) \dar \rar & \hHT_{*}(X_{i+1},X_{i}) \dar \rar & 0 \\
  0 \rar & \hHT_{*}(X_{i},X_{i-1}) \rar & \hHT_{*}(X_{i+1},X_{i-1}) \rar & \hHT_{*}(X_{i+1},X_{i}) \rar & 0.
\end{tikzcd}
\end{equation}
The exactness of top row is Proposition~\ref{thm:hHT-short-exact}
for the triple~\((X_{i+1},X_{i},X_{-1})\),
and that of the bottom follows by looking at~\((X_{i+1},X_{i},X_{i-1})\).

For brevity, we denote \(\hHT_{*}(X_{j},X_{i})\) by~\(M_{j,i}\) and,
for any \(R\)-module~\(M\), we abbreviate \(\Ext_{R}^{p}(M,R)[p]\) by~\(\EE^{p}(M)\).
From the bottom row of~\eqref{eq:hHT-Xi-Xi1-diagram} and the long exact sequence for~\(\Ext\)
we have 
a connecting homomorphism
\begin{equation}
  \EE^{i}(M_{i,i-1})
  \stackrel{\delta_{i}}\longrightarrow \EE^{i+1}(M_{i+1,i}).
\end{equation}

\begin{lemma}
  There is an isomorphism of \(R\)-modules, natural in~\(X\),
  \begin{equation*}
    H^{i}(\AB^{*}(X)) \cong \ker\delta_{i} \bigm/ \im\delta_{i-1}.
  \end{equation*}
\end{lemma}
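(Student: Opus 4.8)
The plan is to identify the Atiyah--Bredon complex $\AB^{*}(X)$ degreewise with the complex built from the connecting maps $\delta_{i}$, and then read off the cohomology. First I would pin down the terms: by Proposition~\ref{thm:stratum-cm-c} the module $M_{i,i-1}=\hHT_{*}(X_{i},X_{i-1})$ is zero or Cohen--Macaulay of dimension $r-i$, so Theorem~\ref{thm:dim-depth-ext} (equivalently, local duality~\eqref{eq:4:local-duality}) gives $\Ext_{R}^{p}(M_{i,i-1},R)=0$ for $p\ne i$; likewise $\Ext_{R}^{p}(M_{i+1,i},R)$ is concentrated in degree $i+1$, and by Proposition~\ref{thm:Ext-i-j-0} the modules $\Ext_{R}^{p}(M_{i+1,i-1},R)$ and $\Ext_{R}^{p}(M_{i,i-2},R)$ vanish outside the degree ranges $\{i,i+1\}$ and $\{i-1,i\}$ respectively. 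Consequently the universal coefficient spectral sequence of Proposition~\ref{thm:4:uct} for the pair $(X_{i},X_{i-1})$ collapses, its edge homomorphism is a natural isomorphism $\AB^{i}(X)=\HT^{*+i}(X_{i},X_{i-1})\cong\EE^{i}(M_{i,i-1})$, and the associated filtration on $\HT^{*}(X_{i},X_{i-1})$ is concentrated in filtration degree $i$.

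Next I would match the differentials inside $\AB^{i}(X)\cong\EE^{i}(M_{i,i-1})$. The differential $d_{i}$ is the connecting map of the triple $(X_{i+1},X_{i},X_{i-1})$, so its long exact sequence gives $\ker d_{i}=\im\bigl(\HT^{*}(X_{i+1},X_{i-1})\to\HT^{*}(X_{i},X_{i-1})\bigr)$, where the displayed restriction map is, via the universal coefficient theorem, dual to the inclusion $M_{i,i-1}\hookrightarrow M_{i+1,i-1}$ that starts the bottom row of~\eqref{eq:hHT-Xi-Xi1-diagram}. Since the target sits in a single filtration degree, naturality of the universal coefficient spectral sequence under maps of pairs shows that this restriction becomes, under the identification above, the map $\Ext_{R}^{i}(-,R)$ applied to $M_{i,i-1}\hookrightarrow M_{i+1,i-1}$; as $\Ext_{R}^{i}(M_{i+1,i},R)=0$, the $\Ext$ long exact sequence of that bottom row identifies its image with $\ker\delta_{i}$. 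Running the same argument for the triple $(X_{i},X_{i-1},X_{i-2})$ gives $\im d_{i-1}=\ker\bigl(\HT^{*}(X_{i},X_{i-1})\to\HT^{*}(X_{i},X_{i-2})\bigr)$, which is identified with the kernel of $\Ext_{R}^{i}(-,R)$ applied to the surjection $M_{i,i-2}\twoheadrightarrow M_{i,i-1}$; by the $\Ext$ long exact sequence of $0\to M_{i-1,i-2}\to M_{i,i-2}\to M_{i,i-1}\to 0$ (this being the bottom row of~\eqref{eq:hHT-Xi-Xi1-diagram} with $i$ replaced by $i-1$), this kernel is exactly $\im\delta_{i-1}$.

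Combining these two computations gives $H^{i}(\AB^{*}(X))=\ker d_{i}/\im d_{i-1}=\ker\delta_{i}/\im\delta_{i-1}$; the inclusion $\im\delta_{i-1}\subseteq\ker\delta_{i}$ is then automatic, since $\AB^{*}(X)$ is a genuine cochain complex and the identifications above intertwine $d$ with $\delta$. Naturality in $X$ is inherited from that of the edge homomorphisms, the triples' long exact sequences, and the $\Ext$ connecting maps. The step I expect to be the main obstacle is the matching of the differentials: one has to invoke naturality of the universal coefficient spectral sequence precisely enough to recognise the restriction maps in the triples' long exact sequences as $\Ext_{R}^{i}(-,R)$ of the corresponding maps of homology modules, the delicate point being that the ``middle'' modules $M_{i+1,i-1}$ and $M_{i,i-2}$ carry nonzero $\Ext$ in two adjacent degrees, so one must keep track of which filtration level of the spectral sequence the relevant maps land in.
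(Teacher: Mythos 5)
Your proposal follows essentially the same route as the paper: collapse the universal coefficient spectral sequences for each $(X_i,X_{i-1})$ to identify $\AB^i(X)$ with $\EE^i(M_{i,i-1})$, use naturality of the universal coefficient spectral sequence to recognize the long-exact-sequence maps as $\Ext$ maps, and then read off the cohomology. You correctly identify $\ker d_i$ via the map $\HT^*(X_{i+1},X_{i-1})\to\HT^*(X_i,X_{i-1})$ and $\im d_{i-1}$ via the map $\HT^*(X_i,X_{i-1})\to\HT^*(X_i,X_{i-2})$, matching them with the corresponding pieces of the $\Ext$ long exact sequences, and you flag the filtration bookkeeping as the delicate point.

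What you do not actually establish is precisely that flagged point. For $\ker d_i$ the comparison naturally goes through the \emph{quotient} edge map $\phi_i\colon\HT^*(X_i,X_{i-1})\twoheadrightarrow E_\infty^i=\EE^i(M_{i,i-1})$, because the domain $\HT^*(X_{i+1},X_{i-1})$ has a two-step filtration and you must kill $\FF^{i+1}$ to compare on the $i$-th graded piece. For $\im d_{i-1}$ the comparison naturally goes through the \emph{inclusion} edge map $\psi_i\colon\EE^i(M_{i,i-1})=E_\infty^i=\FF^i\hookrightarrow\HT^*(X_i,X_{i-1})$, because the target $\HT^*(X_i,X_{i-2})$ has a two-step filtration and the image lands in its $\FF^i$. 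A priori these are two different isomorphisms; to transport both $\ker d_i$ and $\im d_{i-1}$ by a single identification you need $\phi_i=\psi_i^{-1}$. The paper verifies exactly this: since $\Ext^j_R(\hHT_*(X_i,X_{i-1}),R)$ vanishes for $j\neq i$, the filtration has only one nontrivial step, and $\psi_i\phi_i$ is the inclusion $\FF^i\hookrightarrow\FF^{i-1}$, which is then the identity. Your final sentence ``the identifications above intertwine $d$ with $\delta$'' asserts more than you have shown and is not needed; the needed statement is just the one-isomorphism compatibility, and that requires the explicit $\phi_i=\psi_i^{-1}$ argument. With that lemma supplied, your proof is complete and matches the paper's.
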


\begin{proof}
  By Proposition~\ref{thm:Ext-i-j-0}, the universal coefficient spectral sequence
  \begin{equation*}
    E_{2}^{p} = \Ext_{R}^{p}(\hHT_{*}(X_{i+1},X_{i-1}),R) \;\Rightarrow\; \HT^{*}(X_{i+1},X_{i-1})
  \end{equation*}
  collapses (since \(E_{2}^{p}=0\) unless \(p=i\) or~\(i+1\)),
  and there is a short exact sequence
  \begin{equation}
    \label{eq:Ei-short-exact}
    0 \longrightarrow \EE^{i+1}(M_{i+1,i-1})
    \longrightarrow \HT^{*}(X_{i+1},X_{i-1})
    \longrightarrow \EE^{i}(M_{i+1,i-1})
    \longrightarrow 0
  \end{equation}
  coming from the filtration of the spectral sequence.

  Consider the following (possibly non-commuting) diagram:
  \begin{equation*}
    \begin{tikzcd}[font=\small,column sep=small]
      & \HT^{*}(X_{i+1},X_{i-1}) \arrow{r} \arrow{d} & \HT^{*}(X_{i},X_{i-1}) \arrow{r}{d_{i}} \arrow{d}{\phi_{i}} & \HT^{*}(X_{i+1},X_{i}) \arrow{r} & \HT^{*}(X_{i+1},X_{i-1}) \\
      0 \arrow{r} & \EE^{i}(M_{i+1,i-1}) \arrow{r} & \EE^{i}(M_{i,i-1}) \arrow{r}{\delta_{i}} & \EE^{i+1}(M_{i+1,i}) \arrow{r} \arrow{u}[right]{\psi_{i+1}} & \EE^{i+1}(M_{i+1,i-1}) \arrow{r} \arrow{u} & 0
    \end{tikzcd}
  \end{equation*}
  The rows are part of long exact sequences; the bottom one 
  is based on the bottom row of~\eqref{eq:hHT-Xi-Xi1-diagram} and uses again Proposition~\ref{thm:Ext-i-j-0}.
  The vertical maps come from~\eqref{eq:Ei-short-exact},
  and \(\phi_{i}\)~and~\(\psi_{i+1}\) are isomorphisms, once again by Proposition~\ref{thm:Ext-i-j-0}.
  The left square and the right square commute by naturality.
  Hence \(\phi_{i}\) maps \(\ker d_{i}\) isomorphically onto~\(\ker\delta_{i}\),
  and \(\psi_{i+1}\) maps \(\im\delta_{i}\) isomorphically onto~\(\im d_{i}\).
  
  The maps~\(\phi_{i}\) and \(\psi_{i}\) are induced by the filtration of~\(\HT^{*}(X_{i},X_{i-1})\)
  coming from the universal coefficient spectral sequence. But since \(\Ext_{R}^{j}(\HT^{*}(X_{i},X_{i-1}),R)=0\)
  for \(j\ne i\), the filtration of~\(\HT^{*}(X_{i},X_{i-1})\) has only one non-trivial step, \ie,
  it looks like
  \begin{equation}
    0 = \cdots = 0 = \FF^{i+1} \subset \FF^{i} = \HT^{*}(X_{i},X_{i-1}) = \FF^{i-1} = \cdots = \FF^{0}.
  \end{equation}
  By the properties of spectral sequences the composition~\(\psi_{i}\phi_{i}\)
  is the inclusion~\(\FF^{i}\hookrightarrow\FF^{i-1}\), which in our case is the identity.
  So \(\phi_{i}=\psi_{i}^{-1}\) for any~\(i\).
  
  As a consequence, \(\phi_{i}\) induces an isomorphism
  \begin{equation*}
    \ker d_{i} \bigm/ \im d_{i-1} \to \ker\delta_{i} \bigm/ \im\delta_{i-1}.
    \qedhere
  \end{equation*}
\end{proof}

\begin{proof}[Proof of Corollary~\ref{thm:4:ext-hab}]
  Applying \(\Ext_{R}(-,R)\) to the diagram~\eqref{eq:hHT-Xi-Xi1-diagram}
  leads to the commutative diagram
  \begin{equation*}
    \begin{tikzcd}
      \EE^{i}(\hHT_{*}(X_{i+1})) \arrow{r} & \EE^{i}(\hHT_{*}(X_{i})) \arrow{r} & \EE^{i+1}(M_{i+1,i}) \\ 
      & \EE^{i}(M_{i,i-1}) \arrow{r}{\delta_{i}} \arrow{u} & \EE^{i+1}(M_{i+1,i})\mathrlap{.} \arrow{u}[right]{=}
    \end{tikzcd}
  \end{equation*}
  Together with the analogous square for~\(i-1\) instead of~\(i\)
  we can form the commutative diagram
  \begin{equation*}
    \begin{tikzcd}
      0 \\
      \EE^{i-1}(\hHT_{*}(X_{i-1})) \arrow{r}{=} \arrow{u} & \EE^{i-1}(\hHT_{*}(X_{i-1})) \arrow{d} \\
      \EE^{i-1}(M_{i-1,i-2}) \arrow{r}{\delta_{i-1}} \arrow{u} & \EE^{i}(M_{i,i-1}) \arrow{r}{\delta_{i}} \arrow{d}{p_{i}} & \EE^{i+1}(M_{i+1,i}) \\
      & \EE^{i}(\hHT_{*}(X_{i})) \arrow{r}{=} \arrow{d} & \EE^{i}(\hHT_{*}(X_{i})) \arrow{u} \\
      & 0 & \EE^{i}(\hHT_{*}(X_{i+1})) \arrow{u} \\
      & & 0\mathrlap{,} \arrow{u}
    \end{tikzcd}
  \end{equation*}
  where all columns come from the long exact sequence for~\(\Ext\),
  applied to some row of~\eqref{eq:hHT-Xi-Xi1-diagram}.
  We have used Proposition~\ref{thm:Ext-i-j-0} to obtain the zero entries.
  As a consequence,
  \begin{align}
    \ker\delta_{i} &= p_{i}^{-1}(\EE^{i}(\hHT_{*}(X_{i+1}))) \\
    \intertext{and}
    \im\delta_{i-1} &= \ker p_{i}.
  \end{align}
  Hence
  \begin{equation}
    H^{i}(\AB^{*}(X)) = \ker\delta_{i} \bigm/ \im\delta_{i-1}
    \cong \EE^{i}(\hHT_{*}(X_{i+1}))
    \cong \EE^{i}(\hHT_{*}(X)).
  \end{equation}
  The last isomorphism follows from Proposition~\ref{thm:Ext-i-j-0} and the short sequence
  \begin{multline}
    0 = \EE^{i}(\hHT_{*}(X,X_{i+1})) \to \EE^{i}(\hHT_{*}(X)) \\
    \to \EE^{i}(\hHT_{*}(X_{i+1})) \to \EE^{i+1}(\hHT_{*}(X,X_{i+1})) = 0.
  \end{multline}
  This completes the proof.
\end{proof}

\end{document}